\theoremstyle{plain}
\newtheorem{theo}{Theorem}[section]                                      
\newtheorem{prop}[theo]{Proposition}                          
\newtheorem{lem}[theo]{Lemma}
\newtheorem{cor}[theo]{Corollary}
\theoremstyle{definition}
\newtheorem{hyp}[theo]{Assumption}
\theoremstyle{remark}
\newtheorem{rem}[theo]{Remark}
\makeatletter \@addtoreset{equation}{section} \makeatother
\begin{document}
\title{Individual based SIS models \linebreak on (not so) dense large random networks
}
\author{Jean-Fran\c{c}ois Delmas \thanks{Cermics, Ecole des Ponts, France; E-mail:
    \texttt{jean-francois.delmas@enpc.fr}}
  \and Paolo Frasca \thanks{Univ.\ Grenoble Alpes, CNRS, Inria, Grenoble
    INP, GIPSA-Lab, 38000 Grenoble, France; E-mail:
    \texttt{paolo.frasca@gipsa-lab.fr}}
  \and Federica Garin \thanks{Univ.\ Grenoble Alpes, Inria, CNRS,
    Grenoble INP, GIPSA-Lab, 38000 Grenoble, France; E-mail:
    \texttt{federica.garin@inria.fr}}
  \and Viet Chi Tran \thanks{LAMA, Univ Gustave Eiffel, Univ Paris Est
    Creteil, CNRS, F-77454 Marne-la-Vall\'ee, France; IRL 3457,
    CRM-CNRS, Universit\'e de Montr\'eal, Canada; E-mail:
    \texttt{chi.tran@univ-eiffel.fr}}
  \and Aur\'{e}lien Velleret \thanks{Université Paris-Saclay, INRAE, MaIAGE, F-78350 Jouy-en-Josas, France; E-mail:
    \texttt{velleret@phare.normalesup.org}, corresponding author} 
  \and Pierre-Andr\'{e} Zitt \thanks{LAMA, Univ Gustave Eiffel, Univ
    Paris Est Creteil, CNRS, F-77454 Marne-la-Vall\'ee, France; E-mail:
    \texttt{Pierre-Andre.Zitt@univ-eiffel.fr}} 
}
\date{\today}

\maketitle	

\begin{abstract} Starting from a stochastic individual-based description of an SIS epidemic spreading on a random network, we study the dynamics when the size $n$ of the network tends to infinity. We recover in the limit an infinite-dimensional integro-differential equation studied by Delmas, Dronnier and Zitt (2022) for an SIS epidemic propagating on a graphon.
	Our work covers the case of dense and sparse graphs, 
	provided that the number of edges grows faster than $n$, but not the case of very sparse graphs with $O(n)$ edges.
	In order to establish our limit theorem, we have to deal with both the convergence of the random graphs to the graphon and the convergence of the stochastic process spreading on top of these random structures: in particular, we propose a coupling between the process of interest and an epidemic that spreads on the complete graph but with a modified infection rate.
\end{abstract}

\noindent MSC2000: 05C80, 92D30, 60F99.\\
\noindent Keywords: Random graph, mathematical models of epidemics, measure-valued process, large network limit, limit theorem, graphon.\\
\noindent Acknowledgments: This work was financed by the Labex B\'ezout (ANR-10-LABX-58) and the COCOON grant (ANR-22-CE48-0011), and by
the platform MODCOV19 of the National Institute of Mathematical Sciences and their Interactions of CNRS. V.C.T. is partly financed by the Chaire ``Mod\'elisation Math\'ematique et Biodiversit\'e'' of Veolia-Ecole Polytechnique-Museum National d'Histoire Naturelle-Fondation X. The research leading to this article was largely performed while A. Velleret was a researcher at LAMA and at GIPSA-Lab.\\
Accepted at ALEA: \url{https://doi.org/10.30757/ALEA.v21-52}

\section{Introduction}

We consider the spread of  diseases with potential reinfections in large
heterogeneous  populations structured  by random  networks. 
We  focus  on  SIS models,  in  which  the population  is
partitioned into  two classes,  namely $S$ for  susceptible individuals
and  $I$  for infected  ones.   Infected  individuals can  transmit  the
disease to  susceptible ones if they  are in contact through  the social
network.  Infected  individuals remain infectious until  their recovery,
which is assumed to be spontaneous. The susceptibility and infectivity of
individuals as well as their degrees in the social network
can be  heterogeneous, which  makes the dynamics  of such  systems
very complex.   In this paper,  we are interested in  establishing limit
theorems showing that, in large population, the possibly complex dynamics
of  the  epidemics can  be  approximated  by  a system  of  integro-differential
equations. 

To be more precise, we start from a stochastic individual-based model of
a  finite population  of  size $n$,  where each  individual  $i$ in  the
population is  characterized by  a feature $x_i$  that belongs  to a generic space
$\mathbb{X}$  (assumed to  be metric, separable and complete).  One  may think of
the feature being a  group label (so that $\mathbb{X}$ can  be discrete), or the individual's
age (so that  $\mathbb{X}=\mathbb{R}_+$), or their location (so that  $\mathbb{X}$ can be the sphere or $\mathbb{R}^d$),
or a combination of those. 
The distribution of features in the population  can be encoded
by the (possibly random) measure:
\[
\mu^{(n)}(dx)=\frac{1}{n} \sum_{i =1}^{n}   \delta_{x_i}(dx).
\]
The  social contacts in the population  are modeled by a  random network, $G^{(n)}$,
which  is fixed  in time:  
a contact between
individuals $i$  and $j$
means that the edge $(i, j)$ belongs to $G^{(n)}$,
which happens with probability $w^{(n)}_E(x_i,  x_j)$ depending  on their respective features
independently of all other edges. 
Each individual $i$ is, at time $t$, in a state, say $E^i_t$, which
is either $S$ (susceptible) or $I$ (infected). 
The initial condition is completely characterized
by the (possibly random) sequences $\mathcal{X}^{(n)}=(x_i)_{i\in [\![1,
	n]\!]}$ and $\mathcal{E}^{(n)}=(E^i_0)_{i\in [\![1,
	n]\!]}$. At time $t$, if $i$ is
infected, that is $E^i_t=I$, then it recovers at rate
$\gamma^{(n)}(x_i)\geq 0$, also depending on its feature; if 
$i$ is susceptible,  that is $E^i_t=S$, then it can be infected by
individual $j$ at rate $w^{(n)}_I(x_i, x_j)$ provided that $i$ and $j$ are
connected (in $G^{(n)}$) and $j$ is infected. At time $t$, we describe the infected
population with the measure $\eta^{I,(n)}_t$ on $\mathbb{X}$ given by:
\begin{equation}
	\label{eq:intro-etaN}
	\eta_t^{I,(n)}(dx) = \frac{1}{n} \sum_{i =1}^{n} \mathbbm{1}_{\{E^i_t=I\}}\, \delta_{x_i}(dx).
\end{equation}
Our main result, see Theorem~\ref{theo_G1} for a precise statement, is
on the convergence of the processes $\eta^{I,(n)}=(\eta^{I,(n)}_t)_{t\in \mathbb{R}_+}$ to
a deterministic process $\eta^I=(\eta^I_t)_{t\in \mathbb{R}_+}$ of the form:
\begin{equation}
	\label{eq:intro-eta}
	\eta^I_t(dx) =u(t,x) \, \mu(dx) , 
\end{equation}
where the probability measure  $\mu(dx)$,
namely the limit of the sequence $\mu^{(n)}$,
represents the probability for an individual of the population taken at
random to have feature~$x$, and $u(t, x)$ represents the probability for  an individual with feature~$x$ to be infected at time $t$. We will show that the function
$(u(t,x), t\in \mathbb{R}_+, x\in \mathbb{X})$ is the unique solution of the 
integro-differential equation:
\begin{equation}
	\label{eq:intro-uT}
	\partial_t u(t, x)= (1-u(t, x))\int_{\mathbb{X}}  u(t, y)\, 
	w(x, y)\, \mu(dy)- \gamma(x) u(t, x),
\end{equation}
with  initial  condition  $u(0,   \cdot)=u_0$,  where  $u_0(x)$  is  the
probability for  an individual  with feature  $x$ to  be infected  at time
$0$. In \eqref{eq:intro-uT}, the function $w(x,y)$  will be interpreted as a transmission kernel from
(infected) individuals with feature $y$ to (susceptible) individuals with feature $x$, and contains information on both the network and the transmission rate.
The function $\gamma(x)$ can be interpreted as the recovery  rate of (infected) individual  with feature  $x$,
as $\gamma^{(n)}$ in the finite population model. For
this convergence  to hold,  we assume, see  Assumptions~\ref{hyp:A1}
and~\ref{hyp:techn}:
\begin{enumerate}[a)]
	\item \textbf{Convergence  of the  initial condition.}  The distribution
	of        the         features   $\mu^{(n)}$     in         the        population
	converges to a limit measure $\mu$
	(for the weak topology on non-negative  measures). The
	initial distribution of the infected population $\eta^{I,(n)}_0$
	converges to a deterministic limit $\eta^I_0$, 
	with  $\eta^I_0(dx)=u_0(x)  \, \mu(dx)$.
	\item \textbf{Uniform convergence and regularity of the limit parameters.} 
	The recovery rate $\gamma^{(n)}$ as well as  
	the average transmission
	rate $w^{(n)}$, given by the following formula for any $(x, y)\in \mathbb{X}^2$:
	\[
	w^{(n)}(x,y)=n\, w^{(n)}_E(x,y) \, w_I^{(n)}(x,y),
	\]
	converge uniformly to $\gamma$ and $w$, which are respectively
	$\mu$-a.e.\ and $\mu^{\otimes 2}$-a.e.\ continuous. The scaling factor $n$ appears naturally as $n\, w^{(n)}_E(x,y)$
	corresponds to the density of contacts 
	between an individual with feature $x$ and the population with feature $y$ (scaled by the measure $\mu(dy)$),
	where each of these contacts corresponds to a potential infection rate of $w^{(n)}_I(x,y)$.
	
	\item \textbf{Control on the infection rate.} 
	We assume the following
	convergence in mean (see Assumption~\ref{hyp:techn} for a precise
	and slightly more general  statement):
	\begin{equation}
		\label{eq:intro-wI=0}
		\lim_{n\rightarrow \infty } \frac{1}{n^2} \sum_{i, j\in [\![1, n]\!]} w^{(n)}_I
		\big(x_i, x_j\big) =0,
	\end{equation}
	meaning that the infection rate per edge is small on average over the population.
\end{enumerate}
\medskip

Our results  cover in particular the  case of dense and  sparse graphs,
when the  total number  of edges is  scaled to be  of order  $n^a$ with
$a=2$ and $a\in (1,2)$  respectively (see Remark~\ref{rem_epsN} below);
this  corresponds  for  example  to  $w^{(n)}_E\equiv  n^{a-2}$  and  thus
$w^{(n)}_I$ of   order  $n^{1-a}$, by Assumption b) above.   Notice   that~\eqref{eq:intro-wI=0}
trivially holds in this case.  However,  our method fails to cover very
sparse graphs corresponding  to $a=1$ for which the number  of edges is
of order $n$ thus $w^{(n)}_I$ of order $1$, as~\eqref{eq:intro-wI=0} does
not hold.   Simulations, see Section~\ref{sec_Vsparse}, 
hint that,
although a deterministic limit may be derived,
this limit is nonetheless not suitably represented  by  the
deterministic process~\eqref{eq:intro-eta}-\eqref{eq:intro-uT}. 
Yet, the very-sparse case triggered a large literature; in this direction, see the reference in the related works section below.

\medskip

In the dense  heterogeneous case ($a=2$), if $w^{(n)}_E$  does not depends
on $n$,  and thus is equal  to $\bar{w}_E$, and if  $w^{(n)}_I=n^{-1} \bar{w}_I$,
for some fixed kernels $\bar{w}_E$ and $\bar{w}_I$,
then  we have $w= \bar{w}_E \bar w_I$. In this case the  sequence of graphs, $G^{(n)}$, which
models the connection in the  population of size $n$, converges towards
a  graphon with  parameter $\bar{w} _E$,  see \cite[Part  3]{lovaszbook}.  The
quantity $\bar{w} _E(x,y)$ is understood as the density of connections between
the  population  with  feature  $x$ and  the  population  with  feature
$y$. Then  the kernel $w$  can be seen as  the result of  weighting the
graphon $\bar{w}_E$ by another kernel $\bar{w}_I$, which represents the interaction
rates between features. See Remark~\ref{rem:bwq} below for further comment
in this direction.

\paragraph{Related  work.}  There  has  been  a  growing  literature  on
epidemics spreading  on graphs, and  the interested reader can  refer to
\cite{andersson,anderssonbritton,ballbrittonlaredopardouxsirltran,house,kissmillersimon,newman,newman_SIAM}. Scaling limits for such processes have
been considered mostly when the characteristics of  the graph are
fixed:   for  instance,   diseases  spreading   on  fixed   lattices  or
configuration model  graphs with fixed degree  distributions. Many points
of     view     have     been    taken:     with     moment     closures
\cite{durrett,georgioukisssimon},   or   using    limit   theorems   for
semi-martingale                                                processes
\cite{ballneal,volz,miller,decreusefonddhersinmoyaltran,jansonluczakwindridge}
or for  branching processes  \cite{barbourreinert}. In  this literature,
the graphs are \textit{very sparse} with a degree distribution that does
not depend on the size $n$ of the population.

In  the meantime,  there has  been also  a growing  interest in  scaling
limits                 for                 random                 graphs
\cite{borgschayeslovaszsosvesztergombi,lovaszbook}.    In  the   present
paper,  we  model the  social network  by a  graph and  obtain
for the dense  graph case (for $a=2$ above), as limit when $n$  tends to infinity, a
dynamical  system propagating  on a  graphon. Limit
theorems for  dynamical systems  on graphs  converging to  graphons have
been  considered in  \cite{BLRT22,KHT22,kuehnthrom}. In \cite{kuehnthrom,BLRT22}, this is done for  systems of  ordinary
differential  equations on  the graph  (or  when the  randomness of  the
individual-based model  has been averaged  first, and the graph  limit is
considered    in     a    second     time). In \cite{KHT22}, starting from the stochastic individual-based dynamics  on a  random  graph as in the present work, it is proved that after a proper rescaling so that the graph converges to a graphon, the evolution  of the  susceptible and  infected populations converge to the 
integro-differential equation~\eqref{eq:intro-uT},
first  introduced   and
studied in  \cite{DDZ22a}-\cite{DDZ22d}. We  improve the  results from
\cite{KHT22} by taking  into account propagation rates depending on the
features, considering  weaker assumptions  on the  parameters $\gamma$
and  $w$ of  the model,  and most  importantly identifying  the global
condition~\eqref{eq:intro-wI=0}   on  the   mean infection   rate  as
sufficient  to get  this convergence,  generalizing in  particular the
possible   range   of   the    scaling   parameters.   We   refer   to
Remark~\ref{rem:comment}     for     further     comments.     

Eventually, our assumptions  on the regularity of $w$ allow  for a large
variety of random  graphs and of feature dependencies  that include, for
instance,  geometric random  graphs \cite{penrose_randomgeometricgraphs}
or stochastic block models \cite{abbe}, see Remark~\ref{rem_Vgraphs} below.

We    also     cite
\cite{alettinaldi,naldipatane,vizuetefrascagarin}    for   deterministic
epidemic        dynamics        on        graphons,        and        
\cite{aurellcarmonadayaniklilauriere} for  a finite  agent model  on the
graphon.  See also  \cite{vanderhofstadjanssenleeuwaarden} for  epidemic
propagation on an Erd\H{o}s-R\'enyi random graph in the critical window.

Scaling limits  of the SIS epidemic  process for a
large  number   of  individuals  with  homogeneous   interactions  are
well-described, including  the study  of the dynamical  system derived
from the law of large numbers, the deviations prescribed by the central
limit theorem or by the large deviation theory (see e.g. \cite[Chapter 8.2]{anderssonbritton} or \cite[Part I]{ballbrittonlaredopardouxsirltran}). Taking into account features with finite possible values is a direct extension. To  tackle a more  general feature dependency,  we follow
similar lines as in  \cite{fourniermeleard}, where a mean-field kernel
is  introduced  to  represent  local effects  of  competition  between
plants.

\paragraph{Outline.}   In Section~\ref{sec:def-resultats},  we present
the stochastic individual-based  model of SIS epidemic  spreading on a
random network. Two objects, both of random nature, are at the core of
the study: a random  graph $G^{(n)}$ of size $n\in \mathbb{N}$  and a sequence of
stochastic measure-valued processes $(\eta^{(n)})_{n\in \mathbb{N}}$.  We discuss
the sparsity of the random graph and state our main convergence result
in Theorem \ref{theo_G1}.  The proof  of the theorem, which is carried
out     in    Section~\ref{sec:proofs},     follows    a     classical
tightness-uniqueness   scheme.    First,   tightness  is   proved   in
Section~\ref{sec_tight},    which    implies   that    the    sequence
$(\eta^{(n)})_{n\in    \mathbb{N}}$   is    relatively    compact.    Next,    in
Section~\ref{sec_cvg}, we show that  the limiting values are solutions
of a  deterministic integro-differential  equation: proving  this fact
requires a careful coupling argument that is developed in Appendix \ref{sec:appendix}.    Finally,  the  uniqueness   of  the solution to the
limiting       equation,      which       is      established       in
Section~\ref{sec:uniqueness},  allows  us   to  conclude.  A  detailed
discussion on the scope of our results and numerical simulations, with
an  emphasis  on  the  limit  behavior for  very  sparse  graphs,  are
presented in Section~\ref{sec:discussion}.

\section{Definition of the model and of the limiting
	equation}\label{sec:def-resultats}

In this section we formally define the relevant mathematical objects and state our main result. After defining some useful notation in Section~\ref{sect:notation}, we define the stochastic individual-based model in Section~\ref{sec:IBM} and finally in Section~\ref{sec:cv_largepop-graphon} we define the limit graphon-based integro-differential equation and we state the main convergence result. 

\subsection{Notation}\label{sect:notation}\hfill\\
We denote by $\mathbb{N}^*$ the set of positive integers and $\mathbb{N}=\mathbb{N}^* \cup
\{0\}$. We also set $[\![1, n]\!]=\{k\in \mathbb{N} \, \colon\, 1\leq k\leq n\}$
for $n\in \mathbb{N}^*$. 
For $a,b\in \mathbb{R}$, we write 
$a \wedge b$  for  the minimum between $a$ and $b$ and 
$a \vee b$  for  the maximum between $a$ and $b$. 
\medskip

For a real-valued function defined on a set $\Omega$, its
supremum norm is:
\[
\|f\|_\infty=\sup_{\Omega}   |f|.
\]

For   a   measurable  space   $(\Omega,   \mathcal{F})$,   we  denote   by
$\mathcal{M}_1(\Omega)$   the  set   of   probability  measures   on~$\Omega$.
For
$\mu\in \mathcal{M}_1(\Omega)$ and a real-valued measurable function $f$ defined
on $\Omega$, we  will sometimes denote the integral of  $f$ with respect
to      the      measure       $\mu$,      if      well-defined,      by
$\langle \mu | f\rangle = \int_\Omega f(x )\, \mu(dx)= \int f\, d\mu$.
For a bounded measurable real function $F$ defined on $\mathbb{R}$ 
and a bounded real-valued  measurable function $f$ on $\Omega$,
we define the  real-valued  measurable  function $F_f$ defined for
$\mu\in \mathcal{M}_1(\Omega) $ by:
\begin{equation}
	\label{eq:def-Ff}
	F_f(\mu)=F(\langle \mu | f\rangle ).
\end{equation}

For a metric  space $\Omega$ endowed with its  Borel $\sigma$-field, we
endow $\mathcal{M}_1(\Omega)$ with  the topology of weak convergence. We also
denote   by  $\mathcal{D}(\mathbb{R}_+,   \Omega)$   the   space  of   right-continuous
left-limited (càd-làg)  paths from $\mathbb{R}_+$  to $\Omega$.  This  space is
endowed  with the  Skorokhod   topology  (see  \textit{e.g.}   \cite[Chapter
3]{billingsley99}).  \medskip

In what follows  $(\mathbb{X}, \mathrm{d})$ will denote a  Polish metric space  (complete
and separable). We shall write $\mathcal{M}_1=\mathcal{M}_1(\mathbb{X}\times
\{S,I\})$ (where $\mathbb{X}$ is endowed
with its Borel $\sigma$-field) and  $\mathcal{D}=  \mathcal{D}(\mathbb{R}_+,\mathcal{M}_1)$.

\subsection{Individual-based model}\label{sec:IBM}\hfill\\
We consider a  population of $n\in  \mathbb{N}^*$ individuals indexed
by $i\in  [\![1, n]\!]$ and each  characterized by  a value $x^{(n)}_i  \in \mathbb{X}$
(also  called  feature  in  what   follows)  and 
by an epidemiological state $E^{(n),i}_t\in \{S, I\}$,
that varies over (continuous) time $t\in \mathbb{R}_+$
according to whether the individual  $i$ is susceptible or infected.  We
also  set  $\mathcal{X}^{(n)}=(x^{(n)}_i)_{i\in  [\![1, n]\!]}\in  \mathbb{X}^n$.   To  simplify
notation,  we shall write  $x_i$ and  $E^i_t$ for  $x^{(n)}_i$ and
$E^{(n),i}_t$.  
For a time $t\in \mathbb{R}_+$, the population is represented by
the  empirical probability  measure $\eta_t^{(n)}$  on $\mathbb{X}  \times \{S,  I\}$
defined by:
\begin{equation}
	\label{muteN}
	\eta_t^{(n)}(dx, de) = \frac{1}{n} \sum_{i =1}^{n} \delta_{(x_i, E^i_t)}(dx, de).
\end{equation}

This defines a process $(\eta_t^{(n)})_{t\in \mathbb{R}_+}$ 
taking values in the set $\mathcal{M}_1$ 
of probability measures on $\mathbb{X}\times \{S,I\}$ with initial condition:
\begin{equation}
	\eta_0^{(n)}(dx, de) = \frac{1}{n} \sum_{i =1}^{n} \delta_{(x_i, E^i_0)}(dx, de).
	\label{muteN0}
\end{equation}

Note that the measure $\eta^{I, (n)}_t$
defined in the introduction by Equation~\ref{eq:intro-etaN}
corresponds to the restriction of the measure $\eta_t^{(n)}$
to the subset $\mathbb{X}\times \{I\}$.
Since the individual features are kept fixed in time,
the restriction $\eta^{S, (n)}_t$ of the measure $\eta_t^{(n)}$
to the complementary subset $\mathbb{X}\times \{S\}$
is actually deduced as in the following equation:
\begin{equation}
	\label{eta_ekN_S}
	\eta^{S, (n)}_t(dx) 
	= \frac{1}{n} \sum_{i =1}^{n} \mathbbm{1}_{\lbrace E^i_t = S\rbrace} \delta_{x_i}(dx)
	= \mu^{(n)}(dx) - \eta^{I, (n)}_t(dx),
\end{equation}
where $\mu^{(n)}(dx)$  is the marginal probability  distribution in
$x\in \mathbb{X}$
of $\eta^{(n)}_0(dx, de)$ (and thus of $\eta^{(n)}_t$ for all $t\in \mathbb{R}_+$):
\begin{equation}
	\label{eq:marginal-mu}
	\mu^{(n)}(dx)=\eta^{(n)}_0(dx,\{S,I\})= \frac{1}{n} \sum_{i =1}^{n} \delta_{x_i}(dx).
\end{equation}

Let us underline that the process $\eta^{(n)}$ 
is more suited than $\eta^{I, (n)}$ to describe the convergence
because the empirical distribution $\mu^{(n)}$
needs not be introduced as an additional parameter
(in addition to the initial condition), 
see for example Proposition~\ref{prop:micro}.

\medskip

The  heterogeneity  between  individual   contacts  is  modelled  by  an
undirected random  graph  $G^{(n)}=(V^{(n)},E^{(n)})$ along  which  the  disease is	
transmitted.  The vertices $V^{(n)}=[\![1, n]\!]$ represent the individuals and
we  assume   that there is an edge between   $i$ and $j$, that is $(i,j)\in E^{(n)}$,   with  probability
$w^{(n)}_E(x_i,  x_j)\in [0,1]$, and independently for any choice of $1\leq i<j\leq n$.  The function $w^{(n)}_E$ will be called the
\emph{connection density}. 
For an  undirected graph, the  measurable
function   $w^{(n)}_E$   is   necessarily   symmetric, 
\textit{i.e.},  $w^{(n)}_E(x,y)=w^{(n)}_E(y,x)$ for  all $x,  y\in \mathbb{X}$.   When
$(i,j)$ is an  edge of $G^{(n)}$, we will write  classically $i\sim_{G^{(n)}} j$
or  $i\sim j$  when there  is  no ambiguity.

\begin{rem}[Examples of random graphs]
	\label{rem_Vgraphs}
	As  said  in  the  introduction,  the Polish  metric  space  of  features
	$(\mathbb{X},  \mathrm{d})$ parametrizes  the latent  variable assumed to explain the connections in the social network.  In view of the
	graphons which will  be considered, a classical  choice is $\mathbb{X}=[0,1]$
	with independent uniform random features $\mathcal{X}^{(n)}=(x_i)$
	(the reference measure $\mu$ is then the Lebesgue measure).
	However  other choices  are possible  as well  to cover  several graph
	families.

	\begin{enumerate}[(i)]
		\item
		\textbf{Complete  graph.}  
		For  any choice  of  $\mathbb{X}$ and individual features $\mathcal{X}^{(n)}$,  
		choosing     $w^{(n)}_E\equiv 1$ (that  is, $w^{(n)}_E$ constant equal  to 1) provides
		the complete graph where every pair of vertices is connected. The features $\mathcal{X}^{(n)}$ then simply affect the infection rate in a mean-field setting.
		
		\item\label{it:geom}
		\textbf{Geometric   random  graphs.}  
		Choosing   $\mathbb{X}=[0, 1]^d$, 
		the $(x_i)_{i\le n}$ 
		independently and uniformly distributed over $\mathcal{X}$
		(according to the Lebesgue measure
		which is exactly $\mu$)
		and $w^{(n)}_E(x,y)=\mathbbm{1}_{\lbrace |x-y|<r\rbrace}$,  for  a  constant  $r>0$,  provides  an
		example       of       geometric      random       graphs,       see
		e.g.  \cite{penrose_randomgeometricgraphs}. A  direct generalization
		corresponds    to   $w^{(n)}_E(x,y)=g(|x-y|)$    with   a    measurable
		$[0, 1]$-valued function~$g$.
		\item\label{it:SBM}
		\textbf{Stochastic block models (SBM).}  The SBM with $k\in \mathbb{N}^*$
		classes corresponds to a population divided  in $k$ classes and can be
		represented using a finite state  space $\mathbb{X}=[\![1, k]\!]$. 
		Up to reordering, $\mathcal{X}^{(n)}$ is then simply described by the proportion of individuals in each of the $k$
		classes, proportions that are assumed to converge to a certain discrete probability measure $\mu = (\mu(i))_{i\in [\![1, k]\!]}$.
		The symmetric  function $w^{(n)}_E$ then takes a finite number  of values.   See
		e.g. \cite{abbe}.  The SBM is  a very common model for heterogeneous
		population.  The  particular case $k =  1$, where $w_E$ is  simply a
		constant, yields the Erd\H{o}s-R\'enyi random graph, see \textit{e.g.}
		\cite{bollobas1979,vanderhofstad}.
	\end{enumerate}
	In what  follows, it  is possible  to   consider  $G^{(n)}$   a  directed   graph, 
	with   a  slight modification of  the arguments.  However, for the  sake of  clarity and
	since undirected graphs are realistic for social interactions, 
	in this paper we will focus on the case where $G^{(n)}$  is undirected, and in particular $w^{(n)}_E(x,y)$ is symmetric for all $n$.
\end{rem}

We now describe the dynamics of the epidemics 
along the graph using propagation and recovery:
\begin{enumerate}[(i)]
	\item \textbf{Propagation.} Conditionally on $i$ and $j$ being connected 
	and on $i$ being susceptible and  $j$ infected, 
	the disease can be transmitted along the edge 
	at rate $w^{(n)}_I(x_i,x_j)\geq 0$.
	Each edge transmits the infection independently, 
	so for a given susceptible individual $i$, 
	the rate at which it becomes infected is:
	\[\sum_{j\sim i} w^{(n)}_I(x_i, x_j)\, \mathbbm{1}_{\lbrace E^j_t = I\rbrace}.\]
	When the infection occurs, 
	say at time $t$, 
	the state of the population changes from $\eta^{(n)}_{t-}$ to: 
	\[\eta^{(n)}_t=\eta^{(n)}_{t-}-\frac{1}{n}\delta_{(x_i,S)}+\frac{1}{n}\delta_{(x_i,I)}.\]
	
	\item \textbf{Recovery.} An infected individual $i$ recovers at rate $\gamma(x_i)>0$. 
	When the recovery occurs, say at time $t$, 
	the state of the population changes from 
	$\eta^{(n)}_{t-}$ to:
	\[\eta^{(n)}_t=\eta^{(n)}_{t-}-\frac{1}{n}\delta_{(x_i,I)}+\frac{1}{n}\delta_{(x_i,S)}.\]
\end{enumerate}

In  all  the paper,  we  will  assume  that the  \emph{recovery  rate}
$\gamma^{(n)}$,  \emph{infection  rate}  $w^{(n)}_I$,  and  
\emph{connection density} 
$w^{(n)}_E$ are measurable  functions (respectively from $\mathbb{X}$
to $\mathbb{R}_+$,  $\mathbb{X}^2$ to  $\mathbb{R}_+$ and  $\mathbb{X}^2$ to  $[0, 1]$).
Note that they  depend only on the features of the individuals  but not on
time  nor  on times  of  infection  events.   These functions  are  kept
constant throughout the epidemic, but the global infection and recovery
rates, at the  scale of the population, vary because  the partition into
susceptible and infected individuals changes with time.

\medskip

The above  ingredients  define  a pure-jump Markov  dynamics, see
Section~\ref{section:IBM}.  For
the  process $\eta^{(n)}=(\eta^{(n)}_t)_{t\in \mathbb{R}_+}$ to  be itself  Markov, one
needs an  identifiability property, that is,  to be able to  recover the
state of the individuals at time  $t$ from the measure $\eta^{(n)}_t$.  Let
us stress that we shall not use that the process $\eta^{(n)}$ is Markov nor
the   identifiability   property  to   prove   our   main  result,   see
Theorem~\ref{theo_G1}. Nevertheless, we  believe that the
Markov property of $\eta^{(n)}$ deserves to be stated.

We say the model is \emph{identifiable}  if the elements of the sequence
$(x_i)_{i\in [\![1, n]\!]}$ (with $x_i$ depending  also on $n$) are pairwise
distinct, or  equivalently that  the measure $  \mu^{(n)}$ has  exactly $n$
atoms, all of them with mass $1/n$.  When the model is not identifiable,
there is always a natural way to  obtain an identifiable model by enriching the
space $\mathbb{X}$ into  $\mathbb{X}'=\mathbb{X}\times [0, 1]$:
the  feature of the
individual  $i$ can be  described  for   example  by  either   $(x_i,i/n)$  or
$(x_i,U_i)$,  where $(U_i)_{i\in  \mathbb{N}^*}$ are independent  uniform random
variables and independent of the initial condition.

\medskip

Recall that $\mathcal{D}=  \mathcal{D}(\mathbb{R}_+,\mathcal{M}_1) $ denotes the space of càd-làg functions
taking values in $\mathcal{M}_1=\mathcal{M}_1(\mathbb{X}\times  \{S,I\}))$ endowed with the
Skorokhod   topology.      Recall         also  the            notation
$F_f(\eta)=F(\langle \eta | f\rangle ) $ from~\eqref{eq:def-Ff}.
\begin{prop}[Markov property of the process $\eta^{(n)}$]
	\label{prop:micro}
	Let $n\geq 1$ be fixed and assume the model is
	identifiable. Conditionally on the sequence
	$(x_i)_{i\in [\![1, n]\!]}$ and on $G^{(n)}$, the process
	$\eta^{(n)}=(\eta^{(n)}_t)_{t\in \mathbb{R}_+}$
	is a Markov pure-jump process on $\mathcal{M}_1$, and thus is $\mathcal{D}$-valued. Its
	generator $\mathcal{A}^{(n)}$  is  characterized by the following expression, 
	which applies to any bounded  real-valued measurable function $F$ defined  on $\mathbb{R}$,
	to any bounded real-valued measurable function $f$ on $\mathbb{X}\times \{S, I\}$
	and to any point measure $\eta$  with marginal probability distribution given
	by~\eqref{eq:marginal-mu}:
	\begin{multline}
		\label{def:genAn}
		\mathcal{A}^{(n)} (F_f)\, (\eta)
		=    \sum_{i=1}^n  \mathbbm{1}_{\{E^i=I\}} \gamma^{(n)}(x_i)\, 
		\Big(F_f\big(\eta-\frac{1}{n}\delta_{(x_i,I)}
		+\frac{1}{n}\delta_{(x_i,S)}\big)-F_f(\eta)\Big) \\
		+ \sum_{i=1}^n \mathbbm{1}_{\{E^i=S\}}\sum_{j\sim i}w^{(n)}_I(x_i,x_j)\,
		\mathbbm{1}_{\{E^j=I\}}
		\Big(F_f\big(\eta-\frac{1}{n}\delta_{(x_i,S)}
		+\frac{1}{n}\delta_{(x_i,I)}\big)-F_f(\eta)\Big),  
	\end{multline}
	where $x_i$ and $E^i$ denote the feature and
	epidemiological states of the $i$-th atom of $\eta$.
\end{prop}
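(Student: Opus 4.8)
The plan is to peel away the measure-valued formalism, recognize the dynamics as an elementary finite-state continuous-time Markov chain, and then transport that structure to $\cm_1$ using identifiability. First, fix $n$ and a realization of the graph $G\eN$; the features $\cX\eN$ are part of the data. Since the graph and the features are frozen in time, the natural state variable is the vector of epidemical states $E_t=(E^i_t)_{i\in\II{1,n}}\in\{S,I\}^n$, and the propagation and recovery rules stated above say precisely that, from a configuration $E$, the chain jumps to the configuration $E^{i\to S}$ (equal to $E$ except that coordinate $i$ is reset to $S$) at rate $\ind_{\{E^i=I\}}\,\gamma\eN(x_i)$, and to $E^{i\to I}$ at rate $\ind_{\{E^i=S\}}\sum_{j\sim i}w\eN_I(x_i,x_j)\,\ind_{\{E^j=I\}}$. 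The state space $\{S,I\}^n$ is finite and, for $n$ fixed, all these rates are finite real numbers, so the total jump rate out of every configuration is bounded; the standard construction of a pure-jump process from bounded jump rates yields a non-explosive càd-làg Markov process $(E_t)_{t\in\R_+}$ on $\{S,I\}^n$ with those rates and (bounded) generator
\begin{multline*}
  L\varphi(E)=\sum_{i=1}^n\ind_{\{E^i=I\}}\gamma\eN(x_i)\big(\varphi(E^{i\to S})-\varphi(E)\big)\\
  +\sum_{i=1}^n\ind_{\{E^i=S\}}\sum_{j\sim i}w\eN_I(x_i,x_j)\,\ind_{\{E^j=I\}}\big(\varphi(E^{i\to I})-\varphi(E)\big)
\end{multline*}
for every $\varphi\colon\{S,I\}^n\to\R$.

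Next, introduce the map $\Phi(E)=\frac1n\sum_{i=1}^n\delta_{(x_i,E^i)}$ from $\{S,I\}^n$ onto the finite set $\mathcal{S}\subset\cm_1$ of point measures whose $\X$-marginal equals $\mu\eN$. Here identifiability enters: when the $x_i$ are pairwise distinct, the points $(x_i,S),(x_i,I)$, $i\in\II{1,n}$, are all distinct, so the state $E^i$ is recovered from $\Phi(E)$ by inspecting which of $(x_i,S),(x_i,I)$ is an atom; hence $\Phi$ is a bijection. As $\mathcal{S}$ is finite it carries the discrete topology, so $\Phi$ and $\Phi^{-1}$ are continuous, in particular Borel. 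Since $\Phi$ is a bimeasurable bijection, $\eta\eN_\cdot=\Phi(E_\cdot)$ inherits the Markov property: it is a pure-jump Markov process valued in $\mathcal{S}\subset\cm_1$, with càd-làg (indeed piecewise constant) paths, hence $\cd$-valued, and its generator is the conjugate $\cA\eN G(\eta)=L\big(G\circ\Phi\big)\big(\Phi^{-1}(\eta)\big)$ for any bounded measurable $G$ on $\cm_1$.

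Finally, specialize to $G=F_f$. Because $(F_f\circ\Phi)(E)=F\big(\frac1n\sum_i f(x_i,E^i)\big)$, and because switching coordinate $i$ of $E$ to $S$ (resp.\ to $I$) amounts to replacing $\eta=\Phi(E)$ by $\eta-\frac1n\delta_{(x_i,I)}+\frac1n\delta_{(x_i,S)}$ (resp.\ by $\eta-\frac1n\delta_{(x_i,S)}+\frac1n\delta_{(x_i,I)}$), substituting into the formula for $L$ gives exactly~\eqref{def:genAn}, the feature $x_i$ and state $E^i$ of the $i$-th atom of $\eta$ being unambiguous by identifiability. To justify that this expression \emph{characterizes} $\cA\eN$, observe that the family $\{F_f\}$ separates the points of the finite set $\mathcal{S}$: if $\eta\neq\eta'$ in $\mathcal{S}$ there is an index $i$ at which the underlying configurations disagree, and then the choice $F=\mathrm{id}$, $f=\ind_{\{(x_i,I)\}}$ gives $F_f(\eta)\neq F_f(\eta')$; hence the values of $\cA\eN$ on $\{F_f\}$ determine the whole generator of the finite-state process.

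The one point requiring care is the identifiability reduction, which is exactly what makes $\Phi$ invertible and thus lets the Markov property descend from $(E_t)$ to $\eta\eN$. If it fails, two configurations differing at indices $i\neq j$ with $x_i=x_j$ but with $i$ and $j$ having different neighbourhoods in $G\eN$ are mapped to the same measure while having different conditional futures, so $\eta\eN$ is in general not Markov; this is why the hypothesis cannot be dropped, and why the enrichment $\X'=\X\times[0,1]$ is recalled just before the statement. Everything else is routine: finiteness of $\{S,I\}^n$ disposes of any explosion, domain, or core subtlety, and the generator identity is a mechanical transport of $L$ through $\Phi$.
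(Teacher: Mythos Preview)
Your proof is correct and follows the same approach as the paper: both identify the underlying configuration process $(E_t)$ on the finite state space $\{S,I\}^n$ as a bounded-rate pure-jump Markov chain, then use identifiability to show the map $E\mapsto\eta$ is a bijection so the Markov property and generator transport to $\eta\eN$. Your write-up is considerably more detailed than the paper's terse sketch (which essentially says the result is ``an immediate consequence of the description of the epidemics dynamics'' and then notes that identifiability makes the sum $\sum_{j\sim i}$ a functional of $\eta$ and that boundedness of the rates on the finite feature set rules out explosion), but the logical content is the same.
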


\begin{proof}
	The process $\eta^{(n)}$ is of  course constant between jump events given
	by infection  or recovery  events.  The proof  is then  an immediate
	consequence of the description of the epidemic dynamics (see also the
	semi-martingale representation~\eqref{eq:martingalepb} below).  Notice
	that  in  \eqref{def:genAn}  the summation  $\sum_{j\sim  i}$  between
	neighboring pairs $(i, j)$ can indeed  be seen as a functional of the
	probability  measure  $\eta$  with marginal  probability  distribution
	given   by~\eqref{eq:marginal-mu}   thanks  to   the   identifiability
	assumption.  (The two other  summations $\sum_{i=1}^n$ are functionals
	of  the   probability  measure  $\eta$  without   the  identifiability
	assumption.)
	
	Notice  there is  no accumulation  of  jump times  for the  process
	$\eta^{(n)}$  (conditionally on  $\mathcal{X}^{(n)}$) as the jump rates
	$\gamma^{(n)}$ and  $w^{(n)}_I$ are bounded on  $\bar{\mathcal{X}}^{(n)}=\{x_i\, \colon\, i\in [\![1, n]\!]\} $   and $(\bar{\mathcal{X}}^{(n)})^2$, since the  state  space for  the
	process $\eta^{(n)}$  is in fact  reduced to the probability  measures on
	the    finite    set    $\bar{\mathcal{X}}^{(n)}    \times\{S,    I\}$.
\end{proof}

\subsection{Convergence  towards an epidemic spreading on a graphon}\label{sec:cv_largepop-graphon}
\hfill\\
We consider now the case of a large graph, \textit{i.e.} 
when $n$ is large and tends to infinity. 
The network structure 
as well as the dynamics of the epidemic 
are suitably rescaled. 
We  measure the overall propagation rate of the epidemic through the
function $w^{(n)}$ defined on $\mathbb{X}^2$ by: 
\begin{equation}
	\label{def:weN}
	w^{(n)}(x,y)=n\,  w^{(n)}_E(x,y)\,  w^{(n)}_I(x,y),
\end{equation}as described in the introduction.
The function $w^{(n)}$ captures the effect  on the number of contacts (through
$n \,  w^{(n)}_E$) weighted by  the intensity of interactions  (that is,
$w^{(n)}_I$). 

Let  $\eta_0\in  \mathcal{M}_1$  be   a  deterministic  probability  measure  on
$\mathbb{X}\times \{S,I\}$ with marginal $\mu$ on $\mathbb{X}$:
\begin{equation}
	\label{def:eta0}
	\eta_0(dx,de)=\mu(dx)\,\Big(\big(1-u_0(x)\big)\delta_{S}(de)+u_0(x)\delta_I(de)\Big), 
\end{equation}
where  $\mu\in \mathcal{M}_1(\mathbb{X})$  and  $u_0$ is  a  $[0, 1]$-valued  measurable
function defined on $\mathbb{X}$. 
It is elementary to generalize the results when $u_0$ is also  random,
but we prefer to keep the result simple. The probability measure  $\mu$ on $\mathbb{X}$ will capture  the asymptotic feature
distribution of the individuals in the population and will be seen as the limit of $\mu^{(n)}$ introduced in \eqref{eq:marginal-mu}.
Let us set $\mu^{\otimes 2}(dx, dy)  = \mu(dx) \mu(dy)$.

\begin{hyp}[Structural conditions]
	\label{hyp:A1}
	We assume that:
	\begin{enumerate}[(i)]
		
		\item For   $n\in \mathbb{N}  ^*$, the process $(\eta^{(n)}_t)_{t\in \mathbb{R}_+}$ is started
		from the   initial condition $\eta_0^{(n)}$ of  the form \eqref{muteN0}.
		
		\item\label{it:init} The sequence $(\eta_0^{(n)})_{n\in \mathbb{N}^*}$ converges in
		probability to $\eta_0$ (in $\mathcal{M}_1$ endowed with
		the topology of the weak convergence). 
		
		\item\label{it:dcv-w}  The sequence $(w^{(n)})_{n\in \mathbb{N}^*}$ converges
		uniformly to a (non-negative) transmission density kernel  $w$ which is 
		bounded  and $\mu^{\otimes 2}$-a.e.\ continuous. 
		\item\label{it:dcv-gamma} The  sequence $(\gamma^{(n)})_{n\in  \mathbb{N}^*}$ converges uniformly  
		to a  (non-negative) recovery  function $\gamma$  which is bounded   and
		$\mu$-a.e.\ continuous.
	\end{enumerate}
\end{hyp}

\begin{rem}[Transmission kernel $w(x,y) =\beta(x) w_E(x, y) \theta(y)$]
	\label{rem:bwq}
	In \cite[Example 1.3]{DDZ22a}, the authors propose to
	represent the transmission density kernel  $w(x, y)$ from    Assumption
	\ref{hyp:A1}-\ref{it:dcv-w},  as $\beta(x) w_E(x, y) \theta(y)$,
	where  $\beta(x)$ captures the susceptibility of  individuals with feature $x$,
	$\theta(y)$  the infectiousness of  individuals with feature $y$,
	while $w_E(x, y)$ relates to the contact rate between individuals with
	features respectively $x$ and $y$.
	For example, using~\eqref{def:weN}, this corresponds  to the choice of $w^{(n)}_E(x,y)=w_E(x,y)$ 
	and $n\, w^{(n)}_I(x,y)=\beta(x)\theta(y)$.
	In the general form $n \,w^{(n)}_E(x,y) w^{(n)}_I(x,y)$,  
	the factor $n\, w^{(n)}_I(x,y)$ 
	may not only encompass effects that involve each member of the pair separately
	(as the ones captured by $\theta(y)$ and $\beta(x)$), 
	but also effects that are more specific to the interaction
	between these pairs of individuals.
\end{rem}

\begin{rem}[Uniform scaling]
	\label{rem_epsN}
	Among the cases covered by Assumption
	\ref{hyp:A1}-\ref{it:dcv-w},
	it is natural to distinguish the situations where the density of the graph is uniform 
	over the feature interactions. 
	Denoting by $\epsilon_n$ a scaling parameter,
	we consider next the cases  
	where $w^{(n)}_I = \epsilon_n \bar{w}_I$ 
	and $w^{(n)}_E = (n \epsilon_n)^{-1} \bar{w}_E$ 
	for some functions $\bar{w}_E, \bar{w}_I$ on $\mathbb{X}^2$, so that, thanks
	to~\eqref{def:weN}, $w^{(n)}=w$ with $w=\bar{w}_E\bar{w}_I$. 
	Among the different possibilities for the scaling parameter $\epsilon_n$,
	the case where $\epsilon_n = n^{-a+ 1}$ for some $a \in [1, 2]$ 
	appears as the most characteristic
	to describe the level of sparsity of the graph,
	as the total number  of edges in the graph is then of order $n^a$.
	As $n$ gets large, 
	the number of contacts of an individual with feature $x$ is 
	well-described by the quantity $n\, \int_\mathbb{X} w^{(n)}_E(x,y) \mu(dy)$.
	The three interesting regimes are summed up in the following table. 
	
	\medskip

	\begin{center}
		{
			\begin{tabular}{lccccc}
				\toprule
				& $a$ & Nb. of contacts & Conn. density $w^{(n)}_E$ & Inf. rate $w^{(n)}_I$ \\
				\midrule
				Dense graph        & $a=2$          & $n$             &         $1$        &    $n^{-1}$ \\
				Sparse graphs      & $a\in(1,2)$ & $n^{a-1}$       &         $n^{a-2}$    &  $n^{-(a-1)}$ \\
				Very sparse graphs & $a=1$          & $1$            &         $n^{-1}$     & $1$ \\
				\bottomrule                                  
			\end{tabular}
		}
		
	\end{center}
	
	\medskip

	The  most  standard homogeneous  case  corresponds  to  the  complete  graph,  
	corresponding to $w^{(n)}_E\equiv~1$. 
	Note that in the dense and sparse case, the infection rate $w^{(n)}_I$ vanishes in the
	large $n$ limit, but it is not the case in the very sparse case.

	Notice  that the  sequence of  graph realizations  $(G^{(n)})_{n\in \mathbb{N}^*}$
	converges to  the graphon prescribed  by $\bar{w}_E$ only in  the dense
	graphs case.  For  the sparse graphs case, there is  already a limiting
	description of this  sequence of graphs in terms of  the subsampling of
	the graphon  prescribed by  $\bar{w}_E$, by expressing  the graph  as a
	kernel on a functional space (see \cite[Theorem 1]{APSS20}).
	By contrast, in the limiting \emph{very sparse} case, where $a=1$, there is local  convergence of the
	graph towards a graph with finite degrees, whose realization brings an
	additional level of heterogeneity (as introduced in \cite{BS01}
	see  also   \cite[Chap.~3]{Bo16} or \cite[Chap.~2]{vH24}). 
\end{rem}

Aside from the structural condition, we also
consider a crucial condition which ensures the convergence of the
individual stochastic models to the 
same deterministic model as in the mean-field case. 
In order to state it, let us denote by $\mathcal{I}_n(g)$ the empirical average
of a function $g$ over all pairs of features:
\begin{equation}
	\label{def_HI}
	\mathcal{I}_n (g) 
	= \frac{1}{n^2}  \mathbb{E}\Bigg[\sum_{i, j\in [\![1, n]\!]} g\Big(x^{(n)}_i,
	x^{(n)}_j\Big) \Bigg]
	= \mathbb{E}\Big[\int g \, d\mu^{(n)}\otimes d \mu^{(n)}\Big]. 
\end{equation}

\begin{hyp}[On the infection rate]
	\label{hyp:techn}
	The average infection rate vanishes when $n$ goes to infinity:
	\[\lim_{n\rightarrow \infty } \mathcal{I}_n \Big( w^{(n)}_I\wedge 1\Big)=0,
	\quad\text{ with $\mathcal{I}_n$  defined by~\eqref{def_HI}.}
	\]
\end{hyp}

We are now ready to present our  main result. Recall that $\mathcal{D}= \mathcal{D}(\mathbb{R}_+,\mathcal{M}_1)$
denotes    the    space   of    càd-làg    paths    from   $\mathbb{R}_+$    to
$\mathcal{M}_1= \mathcal{M}_1(\mathbb{X}\times \{S,I\})$,  and it is endowed  with the Skorokhod
topology.

\begin{theo}[Convergence of the stochastic individual-based model]
	\label{theo_G1}
	Let $\gamma^{(n)}$, $w^{(n)}_E$, $w^{(n)}_I$, $\gamma$,  $w$  and $\eta^{(n)}_0$ 
	be such that Assumptions \ref{hyp:A1} and~\ref{hyp:techn} are 
	satisfied. 
	
	Then, the sequence $(\eta^{(n)})_{n\in \mathbb{N}  ^*}$ converges in probability in
	the    Skorokhod    space    $\mathcal{D}$,   to    the    continuous    process
	$(\eta_t)_{t\in \mathbb{R}^+}$ with deterministic evolution defined by:
	\begin{equation}
		\label{etaDe}
		\eta_t(dx, de) =\mu(dx) \, ((1-u(t, x))\, \delta_S(de)
		+ u(t,x) \, \delta_I(de)),
	\end{equation}	
	where $\mu$ is the marginal of $\eta_0$ on $\mathbb{X}$, see~\eqref{def:eta0},
	and where $(u(t,x))_{t\in \mathbb{R}_+,x\in \mathbb{X}}$ is the unique solution of the integro-differential equation:
	\begin{equation}
		\label{uT}
		\partial_t u(t, x)= (1-u(t, x))\int_{\mathbb{X}}  u(t, y)\, 
		w(x, y)\, \mu(dy)- \gamma(x) u(t, x),
	\end{equation}
	with initial condition $u(0, \cdot)=u_0$, see~\eqref{def:eta0}. 
\end{theo}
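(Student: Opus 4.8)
The plan is to run the classical tightness--identification--uniqueness program for measure-valued processes, on a fixed time horizon $[0,T]$; the extension to $\R_+$ is routine. The starting point is Dynkin's formula for the Markov process of Proposition~\ref{prop:micro}: applying the generator $\cA\eN$ to the maps $\eta\mapsto\langle\eta\,|\,f\rangle$ and $\eta\mapsto\langle\eta\,|\,f\rangle^2$, with $f$ bounded measurable on $\X\times\{S,I\}$, yields a semimartingale decomposition
\[
\langle\eta\eN_t\,|\,f\rangle=\langle\eta\eN_0\,|\,f\rangle+A^{(n),f}_t+M^{(n),f}_t ,
\]
where $A^{(n),f}$ is the finite-variation part read off from $\cA\eN$ and $M^{(n),f}$ is a càd-làg $L^2$-martingale whose predictable bracket carries a prefactor $1/n$. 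For $f(x,e)=\phi(x)\ind_{\{e=I\}}$ with $\phi\in C_b(\X)$ the drift is
\[
A^{(n),f}_t=\int_0^t\Big(\tfrac1n\sum_i\ind_{\{E^i_s=S\}}\phi(x_i)\sum_{j\sim i}w\eN_I(x_i,x_j)\ind_{\{E^j_s=I\}}-\langle\eta\eIN_s\,|\,\phi\gamma\eN\rangle\Big)ds .
\]
Bounding $\ind_{\{E^i_s=S\}}\ind_{\{E^j_s=I\}}\le1$ and taking expectations gives $\E\big[\tfrac1n\sum_i\ind_{\{E^i_s=S\}}\sum_{j\sim i}w\eN_I(x_i,x_j)\ind_{\{E^j_s=I\}}\big]\le\langle\mu\eN\otimes\mu\eN\,|\,w\eN\rangle\le\|w\eN\|_\infty$, so, together with boundedness of $\gamma\eN$, the expected total jump rate of $\eta\eN$ is bounded uniformly in $n$ and in $s\le T$.

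Granted this a priori bound, tightness is routine. The bracket estimate gives $\E[\langle M^{(n),f}\rangle_T]\le C_f/n\to0$, so the martingale parts are negligible, while $\E[\,|A^{(n),f}_{\tau+\delta}-A^{(n),f}_\tau|\,]\le C_f\,\delta$ for every stopping time $\tau\le T$. By the Aldous--Rebolledo criterion each real-valued process $(\langle\eta\eN_t\,|\,f\rangle)_t$ is tight in $\cd([0,T],\R)$; since the $\X$-marginals of $\eta\eN_t$ are the measures $\mu\eN$, which converge and are therefore tight, the compact-containment condition holds and Roelly's criterion upgrades this to tightness of $(\eta\eN)_n$ in $\cd([0,T],\cm_1)$. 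As all jumps of $\eta\eN$ have size $1/n$, every limit point is almost surely continuous.

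The heart of the proof is the identification of the limit. Let $\eta$ be the limit of a subsequence $\eta^{(n_k)}\Rightarrow\eta$; by the Skorokhod representation theorem we may assume this holds almost surely, and then $\eta^{(n_k)}_s\to\eta_s$ for every $s$ since the limit is continuous. Passing to the limit in $\eta^{(n),S}_s=\mu\eN-\eta\eIN_s$ shows that the $\X$-marginal of $\eta_t$ is $\mu$ and $\eta^I_t\le\mu$, so $\eta^I_t(dx)=u(t,x)\,\mu(dx)$ for a $[0,1]$-valued density $u$, jointly measurable by continuity in $t$, with $\eta^S_t:=\mu-\eta^I_t$; thus $\eta_t$ has the claimed form~\eqref{etaDe}. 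It remains to show that for every $\phi\in C_b(\X)$,
\[
\langle\eta^I_t\,|\,\phi\rangle=\langle\eta^I_0\,|\,\phi\rangle+\int_0^t\Big(\langle\eta^S_s\otimes\eta^I_s\,|\,\phi(x)\,w(x,y)\rangle-\langle\eta^I_s\,|\,\phi\gamma\rangle\Big)ds ,
\]
which is the weak form of~\eqref{uT}. The martingale term drops out by the previous step; the recovery drift passes to the limit because $\gamma\eN\to\gamma$ uniformly, $\gamma$ is $\mu$-a.e.\ continuous and $\eta^I_s\ll\mu$, with dominated convergence in $s$. The obstacle is the infection drift: the edge variables $\ind_{\{i\sim j\}}$ of $G\eN$ are correlated with the states $E^j_s$, so they cannot be replaced by their means $w\eN_E(x_i,x_j)$ through a crude law of large numbers. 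To bypass this we introduce the SIS epidemic $\widetilde\eta\eN$ on the \emph{complete} graph $K_n$, with infection rate $w\eN(x_i,x_j)/n$ along each pair and recovery rate $\gamma\eN$, started from $\eta\eN_0$; its infection drift is already the closed mean-field expression $\langle\widetilde\eta^{(n),S}_s\otimes\widetilde\eta^{(n),I}_s\,|\,\phi(x)\,w\eN(x,y)\rangle$, and $w\eN=n\,w\eN_E\,w\eN_I$ bounded makes it a well-defined pure-jump process. The appendix builds a coupling of $\eta\eN$ and $\widetilde\eta\eN$ on a common probability space for which $\E\big[\sup_{s\le T}\|\eta\eN_s-\widetilde\eta\eN_s\|_{\mathrm{TV}}\big]\to0$; the point is that the two sources of discrepancy are both $o(n)$ in number. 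On the $o(n^2)$ pairs with $w\eN_I(x_i,x_j)>1$ (this count is $o(n^2)$ by Assumption~\ref{hyp:techn} since there $w\eN_I\wedge1=1$), the edge of $G\eN$ is present with probability $w\eN_E(x_i,x_j)\le\|w\eN\|_\infty/n$, so only $o(n)$ individuals are ever incident to such an edge and discarding the associated infections costs $o(1)$ in total variation. On the remaining pairs, where $w\eN_I\wedge1$ is bounded, the fresh-discrepancy rate for a susceptible $i$ is governed by $\big|\sum_{j\sim i}\big(w\eN_I(x_i,x_j)\wedge1\big)\ind_{\{E^j_s=I\}}-\sum_j w\eN_E(x_i,x_j)\big(w\eN_I(x_i,x_j)\wedge1\big)\ind_{\{E^j_s=I\}}\big|$, whose expectation, by independence of the edges of $G\eN$, a variance computation (keeping a factor $w\eN_I\wedge1$ inside) and Cauchy--Schwarz, is bounded by $C\,n\,\sqrt{\cin(w\eN_I\wedge1)}=o(n)$; a Grönwall inequality for $\E\big[\#\{i:E^i_s\ne\widetilde E^i_s\}\big]$ (the remaining terms being proportional to the current number of disagreeing individuals) then closes the estimate. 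Granting the coupling, one substitutes $\widetilde\eta^{(n_k)}$ for $\eta^{(n_k)}$ in the infection drift and passes to the limit using $w\eN\to w$ uniformly, the $\mu^{\otimes2}$-a.e.\ continuity of $w$, $\eta^S_s\otimes\eta^I_s\ll\mu^{\otimes2}$, and the stability of product measures under weak convergence.

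Finally, equation~\eqref{uT} is an ordinary differential equation $\dot u=F(u)$ in the Banach space of bounded measurable functions on $\X$, with $F$ Lipschitz on bounded sets and $[0,1]$ forward-invariant (at $u\equiv0$, $\partial_t u\ge0$; at $u\equiv1$, $\partial_t u=-\gamma\le0$), hence it has a unique global solution starting from $u_0$, which is characterised by the weak formulation above (existence can alternatively be read off from the tightness argument itself). Therefore every subsequential limit of $(\eta\eN)_n$ coincides with the deterministic process~\eqref{etaDe}. Since this limit is deterministic, tightness together with uniqueness of the limit point promotes the convergence to convergence in probability in $\cd(\R_+,\cm_1)$, which is the assertion of Theorem~\ref{theo_G1}. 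The only genuinely delicate ingredient is the coupling of the preceding paragraph, i.e.\ quantifying, through Assumption~\ref{hyp:techn}, the cost of replacing the random network $G\eN$ by its annealed, complete-graph counterpart.
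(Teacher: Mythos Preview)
Your overall architecture --- tightness via a semimartingale decomposition and a compact-containment/projection criterion, identification of the limit through a coupling with a complete-graph epidemic $\widetilde\eta\eN$ carrying infection rate $w\eN/n$, and uniqueness of~\eqref{uT} in $L^\infty$ --- matches the paper's scheme, and the tightness and uniqueness paragraphs are essentially correct.

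The coupling paragraph, however, has a genuine gap. You correctly identify the obstacle: the edge indicators $\ind_{\{i\sim j\}}$ are correlated with the states $E^j_s$ for $s>0$. But your proposed fix does not avoid it. You claim that the fresh-discrepancy rate
\[
\Big|\sum_{j\sim i}(w\eN_I(x_i,x_j)\wedge1)\ind_{\{E^j_s=I\}}-\sum_j w\eN_E(x_i,x_j)(w\eN_I(x_i,x_j)\wedge1)\ind_{\{E^j_s=I\}}\Big|
\]
can be bounded in expectation by a variance computation ``by independence of the edges of $G\eN$''. Expanding the second moment produces cross terms
\[
\E\big[(\ind_{\{i\sim j\}}-w\eN_E(x_i,x_j))(\ind_{\{i\sim j'\}}-w\eN_E(x_i,x_{j'}))\,\ind_{\{E^j_s=I\}}\ind_{\{E^{j'}_s=I\}}\big],
\]
and these do \emph{not} vanish for $j\neq j'$: the centred edge variables are independent of each other, but not of the state indicators $\ind_{\{E^j_s=I\}}$, which depend on the whole graph. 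There is no reason for these covariances to be small, so the $\sqrt{\cin(w\eN_I\wedge1)}$ bound is unjustified. The split into pairs with $w\eN_I>1$ versus $w\eN_I\le1$ does not help here; the dependence problem is present in the second regime.

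The paper resolves this by a different coupling: $\eta\eN$ and $\widetilde\eta\eN$ share the \emph{same} Bernoulli variable $V_1(i,j)$ on the \emph{first} infection arrow between $i$ and $j$, while $\widetilde\eta\eN$ draws a fresh independent Bernoulli on each subsequent arrow. Thus a discrepancy at $(i,j)$ can only be seeded if at least two arrows occur between $i$ and $j$, an event whose probability is controlled by $(t\,w\eN_I)\cdot((t\,w\eN_I)\wedge1)$ times a factor $w\eN_E\le C_w/n$ from the activation. Summing over $j$ gives an $O(\cin(w\eN_I\wedge1))$ bound on the probability that a given vertex becomes a ``root'' of the discrepancy, and a branching-process domination (the ``fog'' process in the appendix) shows each root begets at most $e^{C_wT}$ discrepant vertices in expectation. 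This yields
\[
\E\Big[\sup_{t\le T}\|\eta\eN_t-\widetilde\eta\eN_t\|_{\mathrm{TV}}\Big]\le C_T\,\cin(w\eN_I\wedge1),
\]
with no variance computation and no need to decouple edges from states. The key idea you are missing is this ``agree on the first arrow'' device, which converts the edge--state dependence problem into a counting problem for repeated arrows.
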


Let us  comment on the limit  process. For $t\in \mathbb{R}_+$  and $x\in \mathbb{X}$,
the value of  $u(t, x)$ represents the probability for  an individual with
feature $x$ to be infected at time $t$.  The non-negative recovery rate function
$\gamma$ gives the rate at which individuals with feature $x$ are recovering.
The  transmission kernel  density function  $w(x, y)$  captures in  this
expression  the  contribution to  the  infection  rate of  (susceptible)
individuals with  feature $x$ due to  individuals with feature $y$,  scaled by the
proportion   of   infected   individuals   with  this   feature,   given   by
$ u(t,  y)\, \mu(dy) $.    
Properties of \eqref{uT}  have  been  studied  in
\cite{DDZ22a}, while here we provide an interpretation of this equation as the large-population limit of a stochastic individual-based model.
We refer to Section~\ref{sec:uniqueness} for further comments. 

\begin{rem}[Examples of admissible uniform scaling]
	\label{rem_nAlpha}
	Consider    the    situation    of   Remark    \ref{rem_epsN}    where
	$w^{(n)}_I          =          \epsilon_n         \bar{w}_I$          and
	$w^{(n)}_E =  (n \epsilon_n)^{-1} \bar{w}_E$ with  some bounded functions
	$\bar{w}_E,  \bar{w}_I$ on  $\mathbb{X}^2$  and a  uniform scaling  parameter
	$\epsilon_n$,  so that Assumption~\ref{hyp:A1}-\ref{it:dcv-w}     holds. 
	Assumption~\ref{hyp:techn} is then restated as $\lim_{n \rightarrow\infty} \epsilon_n = 0$,
	thus covering the cases of dense and sparse (but not very sparse) graphs. 
	
	\medskip
	
	This  is  no longer  the  case  for the  very  sparse  graphs model  where
	$a = 1$  and   $\epsilon_n=1$.   As   we  shall  see   in  Subsection
	\ref{sec_Vsparse}, when the  graph is too sparse,  meaning that $w^{(n)}_I$
	is too  large, we cannot expect  the process $\eta^{(n)}$ to  behave as the
	solution $\eta$ to  the problem \eqref{etaDe}.  Let us  mention that the
	very sparse graphs models encompass for example the configuration models
	with fixed  degree distributions in  large populations, which  have been
	treated specifically in the case  of SIR epidemics in various papers
	(\textit{e.g.}
	\cite{andersson_mathscientist,ballneal,barbourreinert,decreusefonddhersinmoyaltran,jansonluczakwindridge,miller,volz}).
\end{rem}

\begin{rem}[Example of the SBM in the dense graphs setting]
	We cover in particular the dense Stochastic Block Model (SBM) with finite feature space, see Remark~\ref{rem_Vgraphs}-\ref{it:SBM} on the SBM and
	Remark~\ref{rem_nAlpha} for the uniform scaling with $a = 2$ and thus
	$\epsilon_n=1/n$. 
	With the notation therein, 
	any individual with feature $q\in [\![1, k]\!]$ recovers at rate $\gamma^{(n)}(q)=\gamma^q$ and 
	is linked to any individual with feature $r$ 
	with probability $w^{(n)}_E(q,r)= w_E^{q, r}\ge 0$ (independently between the pairs),
	triggering the transmission of the disease to the individuals with feature $q$ at rate $w^{(n)}_I(q,r)=
	w_I^{q, r}/n\ge 0$.
	Under the conditions of Theorem~\ref{theo_G1}, the
	limiting process $(\eta_t)_{t\in \mathbb{R}_+}$ 
	has a density with respect to the probability measure $\mu(dx) = \sum_{q
		\le k} \mu_q \delta_q(dx)$ on $\mathcal{M}_1(\mathbb{X})$, where $\mu_q$ is the
	relative size of the population with feature $q$. 
	The measure $\eta_t$ is thus captured 
	through the proportion $u_t^q$ of infected individuals among the ones with
	feature $q$ at time~$t$, with the vector $(u_t^q)_{q\in [\![1, k]\!]}$ satisfying the following system
	of $k$ ODEs:
	\begin{equation*}
		\partial_t u_t^q 
		= (1-u_t^q)\, \sum_{r=1}^k w_I^{q, r} \, w_E^{q, r} \,
		u_t^r	\, \mu_r - \gamma^q \, u_t^q
		\quad\text{for $t\geq 0$ and $q\in [\![1, k]\!]$}. 
	\end{equation*}
\end{rem}

\begin{rem}[Comparison with  \cite{KHT22}]\label{rem:comment}
	
	In \cite{KHT22}, it   is  clear   that  our   Assumptions~\ref{hyp:A1}  and
	\ref{hyp:techn}   are  satisfied. We also consider more general initial conditions and do not restrict to $\mathbb{X} = [0,  1]$,
	which impacts the regularity conditions that one may consider for the parameter functions.
	Contrary to \cite{KHT22}, our framework allows to consider graphs that  strongly exploit  the geometry  of the  latent space       $\mathbb{X}$,      notably       geometric      graphs,       see
	Remark~\ref{rem_Vgraphs}-\ref{it:geom} where $w(x, y) = g(|x-y|)$ with
	any  function   $g:   \mathbb{R}_+   \mapsto  \mathbb{R}_+$   having   at most countably    many
	discontinuities.

	In \cite{KHT22},  the infection rate  (per edge) takes a  constant value that only depends on the population size $n$, and the recovery rate is constant $\gamma(x)\equiv  1$. This does  not allow the  dependence on the  individual  features,  which  might be  of  practical  interest  and insightful concerning the robustness of the results.

	Lastly, it is assumed in \cite{KHT22} that, in our notation,
	$w^{(n)}_E(x, y)  = \kappa_n  w(x, y)$ for  some sequence  $\kappa_n$,
	so that $w^{(n)}_I(x, y)\equiv 1/ (n\kappa_n)$ and 
	$\mathcal{I}_n(w^{(n)}_I \wedge  1) =[(n\kappa_n)\vee 1]^{-1}$.  
	Their  assumption that
	$\log  (n) /  (n\kappa_n)\rightarrow 0$  can, according to Assumption~\ref{hyp:techn}, be relaxed  to $1/  (n\kappa_n)\rightarrow 0$. Let us observe that,
	based on our Theorem \ref{theo_G1}, 
	$w^{(n)}_I$  (and more  precisely  $\mathcal{I}_n(w^{(n)}_I\wedge 1)$)  is the  right
	quantity to control the convergence. The importance of the crucial condition involving $w^{(n)}_I$ is confirmed by the simulations that we present in Section~\ref{sec:discussion}. \hfill $\Box$
\end{rem}

\section{Proof of  Theorem \ref{theo_G1}}\label{sec:proofs}

The proof  of Theorem \ref{theo_G1}  follows a classical  scheme. We 
establish the  uniform tightness of  the distributions of  the processes
$(\eta^{(n)})_{n\in  \mathbb{N}^*}$  in  Section \ref{sec_tight}, see Proposition~\ref{pr_tG1}. By
Prohorov's theorem  \cite[Theorems 5.1  and 5.2  p.59-60]{billingsley99},
this implies that
this sequence of distributions is  relatively compact. We then show
in  Section \ref{sec_cvg}   that  any  potential   limit  must  be  solution   to  a
measure-valued  equation \eqref{eq:eta-sol}, see Proposition~\ref{prop:identify}.  This step  requires a  careful
coupling  between  the  processes   $\eta^{(n)}$  and  auxiliary  processes
$\widetilde{\eta}^{(n)}$   on  the   complete  graph   but  with   modified
infection rate. Using  the uniqueness of the  solution to Equation~\eqref{eq:eta-sol},  see Lemma~\ref{lem:uniqueness},
we  can conclude  that  there  exists a  unique  limiting  value to  the
sequence $(\eta^{(n)})_{n\in \mathbb{N} ^*}$  and hence that it converges to the
unique solution  to Equation~\eqref{eq:eta-sol}, say  $\eta=(\eta_t)_{t\in \mathbb{R}_+}$. From the  uniqueness of
the solution to Equation~\eqref{eq:eta-sol}, we will also establish in Lemma~\ref{lem:uniqueness} that the measures
$\eta_t$       have      densities     with      respect      to
$\mu\otimes (\delta_S+\delta_I)$ that satisfy \eqref{etaDe}-\eqref{uT}.
This will conclude the proof of Theorem \ref{theo_G1}.

\medskip

Before  entering the  proofs of Propositions~\ref{pr_tG1}-\ref{prop:identify} and Lemma~\ref{lem:uniqueness},  we provide  some
results on  the individual-based processes  $\eta^{(n)}$ that will  be used
later.   In  particular,   we  will  rely  heavily   on  the  stochastic
differential equation (SDE) satisfied by these processes.

\subsection{Stochastic differential equation for the individual-based model}\label{section:IBM}
\hfill\\
In this section, we construct the  graphs $G^{(n)}$ on a single probability
space, and the  processes $\eta^{(n)}$, for any $n\in \mathbb{N}^*$,  as solutions of
SDEs driven  by Poisson point measures that
do  not  depend  on  $n$  nor   on  the  graphs  $G^{(n)}$.  This  pathwise
construction allows us on the one  hand to use tools from stochastic
calculus   for   jump   processes   (see   \textit{e.g.}   \cite[Chapter
II]{ikedawatanabe}),  and on  the other  hand to  couple $\eta^{(n)}$  with
$\widetilde{\eta}^{(n)}$ by constructing them on the same probability space
with the same Poisson point measures.
\medskip

\paragraph{Construction of the initial condition of the epidemic process.}
For $n\in \mathbb{N}^*$,
let $\mathcal{X}^{(n)}=(x_i^{(n)})_{i\in [\![1, n]\!]}$ be  a sequence of random variables
taking values in $\mathbb{X}$ and let $\mathcal{E}^{(n)}=(E^{(n),i}_0)_{i\in [\![1, n]\!]}$ be a
sequence  of  random   variables  taking  values  in   $\{S,  I\}$.   For
simplicity,  we  shall   write  $x_i$  and  $E^i_0$   for  $x^{(n)}_i$  and
$E^{(n),i}_0$  when  there  is  no  ambiguity.   The  initial  condition
$\eta^{(n)}_0$ is then given by~\eqref{muteN0}.

\paragraph{Construction of the random graphs $G^{(n)}$.}
Let $\mathcal{V}=(V{(i,  j)})_{1\le i<j}$  be a  family of  independent uniform
random  variables  on   $[0,  1]$,  and  independent   of  $\mathcal{X}^{(n)}$  and
$\mathcal{E}^{(n)}$. For  convenience set $V{(j,i)}=V{(i,j)}$  and $V{(i,i)}=0$.
The graph $G^{(n)}=(V^{(n)}, E^{(n)})$ has vertices $V^{(n)}=[\![1, n]\!]$ and the edge
$(i,j)$ belongs to $E^{(n)}$ if $V{(i,j)}\leq w^{(n)}_E(x_i, x_j)$.

\paragraph{Definition of the Poisson point measures.}

The state  transitions are encoded  thanks to the two  following Poisson
point measures,  that are independent  of $\mathcal{V}$, $\mathcal{X}^{(n)}$  and $\mathcal{E}^{(n)}$.
(For  Poisson point  measures, we  refer  for example  to \cite[Part  I,
Appendix A.2]{ballbrittonlaredopardouxsirltran}).   Let $\mathrm{n}(di)$  be the
counting measure on $\mathbb{N}^*$.
\begin{enumerate}
	\item  For  recovery  events,  we  consider  a  Poisson  point  measure
	$Q_R(ds,  di, du)$  on $\mathbb{R}_+\times  \mathbb{N}^*\times \mathbb{R}_+$,  with intensity
	$ds \, \mathrm{n}(di)\, du$. Each of its  atom, say $(s, i, u)$, is a possible
	recovery event  for the individual $i$  at time $s$.  The  marker $u$
	allows to define whether the possible  event really occurs or not.  On
	the event:
	\begin{equation}
		\label{eq:def-A}
		A^{(n)}(i,u,s)=
		\{i\leq  n\}\cap \{u\le  \gamma^{(n)}(x_i)\}\cap\{E^i_{s-}  = I\},
	\end{equation}
	the
	recovery    of individual $i$  occurs at time $s$,    otherwise    nothing    happens.    By    this
	acceptance-rejection  method, we  can ensure  that recoveries  for $i$
	occur at rate $\gamma^{(n)}(x_i)$ as long as $i$ is infected.
	
	\item  For  infection  events,  we  consider  a  Poisson  point  measure
	$Q_I(ds,            di,            dj,           du)$            on
	$\mathbb{R}_+\times  \mathbb{N}^*\times\mathbb{N}^*\times  \mathbb{R}_+$,  with  intensity
	$ds \,  \mathrm{n}(di)\, \mathrm{n}(dj)\, du  $. Each of  its atom, say $(s, i,
	j, u)$,  is  a possible 
	infection event of individual $i$ by  individual $j$ at time $s$.  The
	marker $u$ serves  for the acceptance-rejection method  to ensure that
	infection  happens  at rate  $w^{(n)}_I(x_i,  x_j)$  as  long as  $j$  is
	infected and $i$ susceptible and connected.  On the event:
	\begin{equation}
		\label{eq:def-B}
		B^{(n)}(i,j,u,s)=  \{i,j\leq  n\} \cap C^{(n)}(i,j,u,s) 
		\cap \{E^i_{s-} = S ; E^j_{s-} = I\},
	\end{equation}
	where
	\begin{equation}
		\label{eq:def-C}
		C^{(n)}(i,j,u,s)= \{ i\sim j\}\cap  \{u\le  w^{(n)}_I
		(x_i, x_j)\}, 
	\end{equation}
	the individual $i$ is infected by $j$, otherwise nothing
	happens. Notice the event $\{i\sim j\}$ can also be written
	$\{V{(i,j)} \leq  w^{(n)}_E(x_i, x_j)\}$.  
\end{enumerate}

To simplify notation, the implicit parameters of the Poisson point
measures are not recalled in the notation and we will use the following
abbreviations:
\[
dQ_R=Q_R(ds,di,du)
\quad\text{and}\quad
dQ_I=Q_I(ds,di, dj,du).
\]

\paragraph{Encoding of $\eta^{(n)}$.}

The evolution of $\eta^{(n)}$ is given by the following equation, for $t\geq 0$:
\begin{multline}
	\label{SDE:etaN}
	\eta^{(n)}_t - \eta^{(n)}_0
	= n^{-1} \int \mathbbm{1}_{\{s<t\}} \, (\delta_{(x_i, S)} - \delta_{(x_i, I)})
	\,\mathbbm{1}_{A^{(n)}(i,u,s)} \,  dQ_R\\
	+ n^{-1} \int \mathbbm{1}_{\{s<t\}}\,   (\delta_{(x_i, I)} - \delta_{(x_i,
		S)}) \, \mathbbm{1}_{B^{(n)}(i,j,u,s)} \,  dQ_I.
\end{multline}

\paragraph{Semi-martingale     decomposition.}      We     denote     by
$(\mathcal{F}^{(n)}_t)_{t\in \mathbb{R}_+}$  the natural filtration associated  to the Poisson
point measures $Q_R$  and $Q_I$, and the random variables  $\mathcal{V}$,
$\mathcal{X}^{(n)}$ and $\mathcal{E}^{(n)}$, so that the  $V_{(i,j)}$'s, the $x_i$'s and  the $E^i_0$'s
are $\mathcal{F}^{(n)}_0$-measurable.   For any  bounded measurable
function  $f$ defined  on $\mathbb{X}\times  \{S, I\}$,  we have  the following
semi-martingale    decomposition    of     the    real-valued    process
$(\langle \eta^{(n)}_t\, \big| \, f\rangle)_{t\in \mathbb R_+}$:
\begin{equation}
	\label{eq:martingalepb}
	\langle \eta^{(n)}_t\, \big| \, f\rangle - \langle \eta^{(n)}_0\, \big| \, f\rangle
	= V^{(n)}_t + M^{(n)}_t,
\end{equation}
where the predictable finite variation process $V^{(n)}=(V^{(n)}_t )_{t\in
	\mathbb{R}_+}$ is given by:
\begin{multline}
	\label{VeNt}
	V^{(n)}_t 
	= \int_0^t ds  \int_{\mathbb{X}} \eta^{(n)}_s(dx, I)\,  \gamma^{(n)}(x) 
	(f(x, S) - f(x, I)) \\
	+ n^{-1} \int_0^t ds  \, \sum_{i\sim j; \, i,j\in [\![1, n]\!]} w^{(n)}_I(x_i,x_j)\,
	\mathbbm{1}_{\lbrace E^i_s=S, \, E^j_s=I\rbrace} 
	\, (f(x, I) - f(x, S)).
\end{multline}
The square integrable martingale process $M^{(n)}=(M^{(n)}_t )_{t\in
	\mathbb{R}_+}$ is given  by:
\begin{multline}
	\label{MeNt}
	M^{(n)}_t 
	= n^{-1} \int \mathbbm{1}_{\lbrace s<t\rbrace}\,  (f(x_i, S) - f(x_i, I)) \mathbbm{1}_{A^{(n)}(i,u,s)}\,
	d\widetilde{Q}_R 		\\
	+ n^{-1}  \int  \mathbbm{1}_{\lbrace s<t\rbrace}\, (f(x_i, I) - f(x_i,
	S))\mathbbm{1}_{B^{(n)}(i,j,u,s)}\,	d\widetilde{Q}_I, 
\end{multline}
where  $\widetilde{Q}_R$  and   $\widetilde{Q}_I$  are  the  compensated
measures of $Q_R$ and  $Q_I$ respectively.  Its quadratic  variation
$\langle M^{(n)}\rangle=(\langle M^{(n)}\rangle_t)_{t\in \mathbb R_+}$ is
given by:
\begin{multline}
	\label{MMeNt}
	\langle M^{(n)}\rangle_t
	=n^{-1} \int_0^t ds \int_{\mathbb{X}} \eta^{(n)}_s(dx, I)\,  \gamma^{(n)}(x) \cdot (f(x,
	S) - f(x, I))^2 \\ 
	+ n^{-2}   \int_0^t ds\, 
	\sum_{i\sim j; \, i,j\in [\![1, n]\!]}w^{(n)}_I(x_i, x_j) \mathbbm{1}_{\lbrace E^i_s = S, \, E^j_s = I\rbrace}
	\,(f(x_i, I) - f(x_i, S))^2.
\end{multline}

For the complete graph, the above
expressions can be simplified using~\eqref{def:weN} as follows.
\begin{lem}[The particular case of the complete graph]
	\label{lem:we=1+simple}
	When   $w^{(n)}_E\equiv1$ and using \eqref{def:weN}, 
	Equations~\eqref{VeNt} and~\eqref{MMeNt} become:
	\begin{multline}
		\label{VeNt-complete}
		V^{(n)}_t 
		= \int_0^t ds  \int_{\mathbb{X}} \eta^{(n)}_s(dx, I)\,  \gamma^{(n)}(x) 
		\cdot	(f(x, S) - f(x, I)) \\
		+ \int_0^t ds \int_{\mathbb{X}} \eta^{(n)}_s(dx, S)\int_{\mathbb{X}} \eta^{(n)}_s(dy, I)\, w^{(n)}(x, y) 
		\cdot (f(x, I) - f(x, S)) , 
	\end{multline}
	and
	\begin{multline}
		\label{MMeNt-complete}
		\langle M^{(n)}\rangle_t
		=n^{-1} \int_0^t ds \int_{\mathbb{X}} \eta^{(n)}_s(dx, I)\,  \gamma^{(n)}(x) \cdot (f(x,
		S) - f(x, I))^2 \\ 
		+ n^{-1}  \int_0^t ds \int_{\mathbb{X}} \eta^{(n)}_s(dx, S)\int_{\mathbb{X}} \eta^{(n)}_s(dy, I)\, w^{(n)}(x, y) 
		\cdot (f(x, I) - f(x, S)) ^2.
	\end{multline}
\end{lem}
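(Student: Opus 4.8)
The plan is a direct substitution of the hypothesis $w\eN_E\equiv 1$ into the general semi-martingale formulas~\eqref{VeNt} and~\eqref{MMeNt}; no probabilistic input beyond these is required. First I would observe that, with the pathwise construction of $G\eN$ from Section~\ref{section:IBM}, the edge $(i,j)$ belongs to $E\eN$ whenever $V_{(i,j)}\le w\eN_E(x_i,x_j)=1$, which always holds, so $i\sim j$ for every pair $i\ne j$; hence in~\eqref{VeNt} and~\eqref{MMeNt} the constrained sums $\sum_{i\sim j;\,i,j\in\II{1,n}}$ can be replaced by the full double sums $\sum_{i,j\in\II{1,n}}$, the diagonal terms $i=j$ contributing nothing thanks to the indicator $\idc{E^i_s=S,\,E^j_s=I}$.

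Second, I would use the definition~\eqref{def:weN}, which for $w\eN_E\equiv 1$ reads $w\eN_I(x,y)=n^{-1}\,w\eN(x,y)$, and substitute it into the (now unconstrained) propagation term of~\eqref{VeNt} to obtain
\[
  n^{-1}\sum_{i,j\in\II{1,n}} w\eN_I(x_i,x_j)\,\idc{E^i_s=S,\,E^j_s=I}\,(f(x_i,I)-f(x_i,S))
  = n^{-2}\sum_{i,j\in\II{1,n}} w\eN(x_i,x_j)\,\idc{E^i_s=S,\,E^j_s=I}\,(f(x_i,I)-f(x_i,S)).
\]
The last step is to recognise this double sum, using $\eta\eN_s(dx,S)=\frac1n\sum_i\idc{E^i_s=S}\,\delta_{x_i}(dx)$ and $\eta\eN_s(dy,I)=\frac1n\sum_j\idc{E^j_s=I}\,\delta_{x_j}(dy)$, as the iterated integral $\int_\X\eta\eN_s(dx,S)\int_\X\eta\eN_s(dy,I)\,w\eN(x,y)\,(f(x,I)-f(x,S))$, which gives~\eqref{VeNt-complete}; the recovery term of $V\eN_t$, and the whole martingale $M\eN$, are left unchanged. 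Performing the identical manipulation on the propagation term of the quadratic variation~\eqref{MMeNt}, now carrying the square $(f(x_i,I)-f(x_i,S))^2$ and the prefactor $n^{-2}$, yields $n^{-1}\int_\X\eta\eN_s(dx,S)\int_\X\eta\eN_s(dy,I)\,w\eN(x,y)\,(f(x,I)-f(x,S))^2$ and hence~\eqref{MMeNt-complete}.

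I do not expect any genuine obstacle here: the whole argument is algebraic. The only point to be careful about is the bookkeeping of the powers of $n$ — the factor $n$ in $w\eN=n\,w\eN_E\,w\eN_I$, the prefactor $n^{-1}$ (resp.\ $n^{-2}$) in front of the propagation sum of~\eqref{VeNt} (resp.~\eqref{MMeNt}), and the two normalizations $1/n$ hidden in $\eta\eN_s(\cdot,S)$ and $\eta\eN_s(\cdot,I)$ — which must combine correctly, namely $n^{-1}\cdot n^{-1}=n^{-2}$ for $V\eN_t$ and $n^{-2}\cdot n^{-1}=n^{-1}\cdot n^{-2}$ for $\langle M\eN\rangle_t$.
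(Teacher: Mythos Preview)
Your proposal is correct and matches the paper's approach: the paper does not even spell out a proof, merely noting that the expressions ``can be simplified using~\eqref{def:weN}'', which is exactly the direct substitution you describe. Your handling of the diagonal terms via the indicator $\idc{E^i_s=S,\,E^j_s=I}$ and your tracking of the powers of~$n$ are both fine.
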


\subsection{Tightness}
\label{sec_tight}
\hfill\\
The following tightness criterion relies on the semi-martingale
decomposition given in the previous section. 
Recall that  a
sequence of random elements of $\mathcal{D}$ is  C-tight if it is tight with all
the possible limits being a.s.\ continuous.

\begin{prop}[Tightness]
	\label{pr_tG1}
	Under Assumption~\ref{hyp:A1},
	the sequence of distributions of the processes $(\eta^{(n)})_{n\in \mathbb{N}^*}$ is 
	C-tight on the Skorokhod space $\mathcal{D}$.
\end{prop}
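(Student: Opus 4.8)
The plan is to establish C-tightness of $(\eta\eN)_{n\in\N^*}$ in $\cd=\cd(\R_+,\cm_1)$ by the standard route: since $\cm_1(\X\times\{S,I\})$ is a Polish space endowed with the weak topology, it suffices (by a Jakubowski-type criterion, or simply because $\cm_1$ is compact when $\X$ is compact, and in general by testing against a convergence-determining countable family of bounded continuous functions) to prove that for each bounded continuous $f$ on $\X\times\{S,I\}$, the real-valued processes $(\langle\eta\eN_t\mid f\rangle)_{t\geq 0}$ are C-tight in $\cd(\R_+,\R)$, together with a compact-containment condition at the level of measures. The compact-containment part is cheap: by Assumption~\ref{hyp:A1}-\ref{it:init}, $(\eta\eN_0)_n$ converges in distribution, hence is tight, so for every $\varepsilon$ there is a compact $K_\varepsilon\subset\X$ with $\mu\eN(K_\varepsilon^c)\leq\varepsilon$ with high probability uniformly in $n$; since features are frozen in time, $\eta\eN_t(K_\varepsilon^c\times\{S,I\})=\mu\eN(K_\varepsilon^c)$ for all $t$, so the whole path stays in a fixed compact subset of $\cm_1$. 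Thus the only real work is tightness of the scalar semimartingales.

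For the scalar processes I would use the Rebolledo/Aldous–Rebolledo criterion applied to the decomposition $\langle\eta\eN_t\mid f\rangle-\langle\eta\eN_0\mid f\rangle=V\eN_t+M\eN_t$ from~\eqref{eq:martingalepb}. One needs: (a) tightness of the initial values $\langle\eta\eN_0\mid f\rangle$, which follows from Assumption~\ref{hyp:A1}-\ref{it:init}; (b) uniform control of the finite-variation part $V\eN$ and of the bracket $\langle M\eN\rangle$ over small time increments. For $V\eN$, using~\eqref{VeNt} and $\Ninf f<\infty$, the recovery term is bounded in absolute value by $2\Ninf f\,\Ninf{\gamma\eN}\,(t-s)$ on $[s,t]$, and $\Ninf{\gamma\eN}$ is uniformly bounded since $\gamma\eN\to\gamma$ uniformly with $\gamma$ bounded (Assumption~\ref{hyp:A1}-\ref{it:dcv-gamma}). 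The infection term of $V\eN_t$ is the delicate one: it equals $n^{-1}\int_0^t\!ds\sum_{i\sim j}w\eN_I(x_i,x_j)\idc{E^i_s=S,E^j_s=I}\,(f(x_i,I)-f(x_i,S))$, which in absolute value is at most $2\Ninf f\cdot n^{-1}\sum_{i\sim j}w\eN_I(x_i,x_j)\,(t-s)$. One then bounds $n^{-1}\sum_{i\sim j}w\eN_I(x_i,x_j)\le n^{-1}\sum_{i,j}w\eN_I(x_i,x_j)\idc{V_{(i,j)}\le w\eN_E(x_i,x_j)}$ and takes expectations: integrating out the independent edge variables $\cv$ gives $\E[n^{-1}\sum_{i,j}w\eN_E(x_i,x_j)w\eN_I(x_i,x_j)]=\cin(w\eN)$, which converges to $\langle\mu^{\otimes2}\mid w\rangle<\infty$ by Assumption~\ref{hyp:A1}-\ref{it:dcv-w} (together with convergence of $\mu\eN$), hence is bounded in $n$. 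This yields, via a stopping-time/Markov-inequality argument in the Aldous criterion, the required uniform modulus-of-continuity control of $V\eN$.

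For the martingale part, from~\eqref{MMeNt} both contributions to $\langle M\eN\rangle_t-\langle M\eN\rangle_s$ carry an extra factor $n^{-1}$ (resp.\ $n^{-2}$) relative to the corresponding $V\eN$ bounds, so the same estimates give $\E[\langle M\eN\rangle_t-\langle M\eN\rangle_s]\le C\,n^{-1}(t-s)$ with $C$ uniform in $n$; in particular $\E[\langle M\eN\rangle_T]\to0$, so $M\eN\to0$ uniformly on compacts in $L^2$ (by Doob), which both gives the Aldous control for $M\eN$ and, incidentally, shows the limit points satisfy the deterministic equation with no noise. Combining (a) the compact containment in $\cm_1$, (b) the Aldous–Rebolledo control of $V\eN$ and $\langle M\eN\rangle$ for every $f$ in a convergence-determining family, and (c) the fact that the jumps of $\langle\eta\eN\mid f\rangle$ are bounded by $2\Ninf f/n\to0$ (so any limit is continuous), we conclude C-tightness of $(\eta\eN)_n$ in $\cd$. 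The main obstacle is the infection term in $V\eN$: its naive pathwise bound involves $n^{-1}\sum_{i\sim j}w\eN_I(x_i,x_j)$, and controlling its expectation is exactly where the edge-probability $w\eN_E$ and the scaling in~\eqref{def:weN} must be used — crucially, this uses only the boundedness of the limit $w$, i.e.\ Assumption~\ref{hyp:A1}-\ref{it:dcv-w}, and not yet Assumption~\ref{hyp:techn}, which is reserved for identifying the limit.
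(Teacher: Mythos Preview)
Your proposal is correct and follows essentially the same route as the paper: compact containment via the frozen feature marginal $\mu\eN$, and tightness of the scalar projections via the semimartingale decomposition~\eqref{eq:martingalepb}, the key estimate being that $\cj\eN:=n^{-1}\sum_{i\sim j}w\eN_I(x_i,x_j)$ has expectation bounded by $C_w$ after integrating out the edge indicators (this is exactly Lemma~\ref{lem:betatight}), which yields a pathwise Lipschitz bound $|V\eN_t-V\eN_s|\le 2\Ninf{f}(C_\gamma+\cj\eN)|t-s|$ with $\cj\eN$ tight, together with $\E[\langle M\eN\rangle_T]=O(1/n)\to 0$ and jump size $O(1/n)$. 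The only cosmetic difference is that the paper packages the projection argument through the Jacod--Shiryaev domination machinery (Propositions~VI.3.35--36, Theorem~VI.4.13 in \cite{jacod}) rather than the Aldous--Rebolledo criterion you invoke; since $\cj\eN$ is $\cf\eN_0$-measurable, both formulations go through without difficulty.
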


In order to justify the tightness,
we will have to specify global upper-bounds on the jump rate, 
which we relate to the initial condition $\eta_0^{(n)}$,
and more precisely on its marginal  on $\mathbb{X}$ given by  $\mu^{(n)}(dx) =
\eta_0^{(n)}(dx, \{S, I\})$, see~\eqref{eq:marginal-mu}. 
(Notice that the measure $\mu^{(n)}$ is also the marginal on $\mathbb{X}$ of
$\eta_t^{(n)}$ for all $t\in \mathbb{R}_+$).

We consider the following non-negative  function of $\mathcal{X}^{(n)}$:
\begin{equation}
	\label{beta1Def}
	\mathcal{J}^{(n)}= \frac{1}{n} \sum_{i, j\in [\![1, n]\!]: i\sim  j}
	w^{(n)}_I(x_i, x_j).
\end{equation}
The terms $\mathcal{J}^{(n)}$  are potentially of order $n$ since there can be up to
$n^2$ terms in the sum.  
The next lemma asserts that their distributions
are  tight.  By  Assumption \ref{hyp:A1}-\ref{it:dcv-w},  there exist
a finite positive constant $C_w$ and an integer $n_0 \in \mathbb{N}^*$ large enough such that
$\sup_{n\geq  n_0}\|w^{(n)}\|_\infty\leq
C_w$ and  $\|w\|_\infty\leq
C_w$. Without loss of generality, we can assume that $n_0=1$.

\begin{lem}\label{lem:betatight}
	Under   Assumption   \ref{hyp:A1}-\ref{it:dcv-w},  we have:
	\begin{equation}
		\label{eq:EJ-bound}
		\mathbb{E}[\mathcal{J} ^{(n)}]
		\le C_w
	\end{equation}
	and 
	the   sequence   of
	distributions of $(\mathcal{J}^{(n)})_{n\in \mathbb{N}^*}$ is  tight on $\mathbb{R}$.
\end{lem}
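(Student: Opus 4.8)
The statement is elementary, and the plan is to prove it in two short steps: first compute (or bound) $\E[\cj\eN]$ by conditioning on the feature family $\cx\eN$, and then deduce tightness on $\R$ from Markov's inequality together with the non-negativity of $\cj\eN$.

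For the bound \eqref{eq:EJ-bound}, I would use the pathwise construction of $G\eN$ from Section~\ref{section:IBM}: conditionally on $\cx\eN$, the edge indicators $(\ind_{\{i\sim j\}})_{1\le i<j\le n}$ are independent, with $\E[\ind_{\{i\sim j\}}\mid\cx\eN]=w\eN_E(x_i,x_j)$, and $i\sim j$ entails $i\neq j$. Writing the sum in \eqref{beta1Def} over ordered pairs of distinct indices, taking the conditional expectation, using the symmetry of $w\eN_E$ and the identity $w\eN=n\,w\eN_E\,w\eN_I$ from \eqref{def:weN}, one gets
\begin{equation*}
  \E\!\left[\cj\eN \,\middle|\, \cx\eN\right]
  = \frac1n\sum_{i\neq j} w\eN_E(x_i,x_j)\,w\eN_I(x_i,x_j)
  = \frac1{n^2}\sum_{i\neq j} w\eN(x_i,x_j)
  \le \Ninf{w\eN}\le C_w,
\end{equation*}
the last inequality being Assumption~\ref{hyp:A1}-\ref{it:dcv-w} with the normalisation $n_0=1$. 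Taking expectations yields $\E[\cj\eN]\le C_w$; in fact the same computation gives $\E[\cj\eN]\le\cin(w\eN)$, since adding back the diagonal terms $i=j$ only increases the (non-negative) sum.

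For the tightness of $(\cj\eN)_{n\in\N^*}$, since $w\eN_I\ge 0$ we have $\cj\eN\ge 0$, so Markov's inequality combined with \eqref{eq:EJ-bound} gives $\sup_{n\in\N^*}\P(\cj\eN>K)\le C_w/K$ for every $K>0$. Letting $K\to\infty$ shows that the family of laws of $(\cj\eN)_{n\in\N^*}$ is uniformly tight on $[0,\infty)$, hence on $\R$, which concludes.

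There is no genuine obstacle here; the only point worth spelling out is the conditioning step, which relies solely on the independence of the edges built into the construction of $G\eN$, and on the observation that the scaling factor $n$ in $n\,w\eN_E\,w\eN_I = w\eN$ is exactly compensated by the $1/n$ prefactor in the definition of $\cj\eN$, so that the potentially-$O(n)$ quantity $\cj\eN$ in fact has a uniformly bounded mean.
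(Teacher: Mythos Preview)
Your proof is correct and follows essentially the same approach as the paper: condition on $\cx\eN$ to turn the edge indicators into their probabilities $w\eN_E(x_i,x_j)$, recognise $n\,w\eN_E\,w\eN_I=w\eN$ so that the resulting double sum is bounded by $\Ninf{w\eN}\le C_w$, and then invoke Markov's inequality together with non-negativity for tightness. The only cosmetic difference is that you sum over $i\neq j$ while the paper sums over all $i,j\le n$; both give the same bound since the extra diagonal terms are non-negative.
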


\begin{proof} 
	According to the definition of the random graph of
	interactions, we have $i\sim j$  if $V{(i, j)} \le w^{(n)}_E(x_i, x_j)$,
	where the $V{(i, j)}$ are sampled independently.  Thus, we get:
	\[
	\mathbb{E}\Big[\mathcal{J} ^{(n)}\, |\, \mathcal{X}^{(n)}\Big]
	=\frac{1}{n} \sum_{i, j\le n} w^{(n)}_I(x_i, x_j) 
	\mathbb{P}\big(V{(i, j)} \le w^{(n)}_E(x_i, x_j)\big)
	= \frac{1}{n^2}\sum_{i, j\le n} w^{(n)}(x_i, x_j)
	\le C_w.
	\]
	Thanks to the Markov inequality since $\mathcal{J}^{(n)}$ is non-negative, we deduce
	that  $  \mathbb{P}\big(|\mathcal{J}^{(n)} |>C_w/\varepsilon\big)<\varepsilon$  for
	all $\varepsilon>0$ and $n\in \mathbb{N}^*$. This gives the result. 
\end{proof}

We  are now  ready  to  prove Proposition  \ref{pr_tG1}. 

\begin{proof}[Proof of Proposition \ref{pr_tG1}]
	Let
	$D_0$ be  a fixed  set of  real-valued continuous  bounded functions
	defined  on  $\mathbb{X}\times \{S, I\}$,  containing  the  constant  functions,
	and assume that $D_0$ is separating, that is:
	\[ \Big(
	\forall f\in D_0, \;
	\forall \nu, \nu'\in  \mathcal{M}_1,
	\langle \nu, f \rangle =  \langle \nu', f \rangle
	\Big)     \;  \implies \; \nu=\nu'.
	\]
	According to  Theorem II.4.1 in
	\cite{perkins}, the sequence of processes
	$(\eta^{(n)})_{n\in \mathbb{N}^*}$ is C-tight  in $\mathcal{D}$ if and only if:
	\begin{enumerate}[(a)]
		\item\label{it:CCC} \textbf{Compact Containment Condition (CCC).} For all
		$\varepsilon>0$, $T>0$, there exists a compact set $K_{T, \varepsilon}$
		in $\mathbb{X}\times \{S, I\}$ such that:
		\[
		\sup_{n\in \mathbb{N} ^*} \mathbb{P}\Bigg(\sup_{t\leq  T} \eta^{(n)}_t (K_{T,
			\varepsilon}^c) >\varepsilon\Bigg)<\varepsilon.
		\]
		\item  \label{it:proj}  \textbf{Tightness  of the  projections.}
		For all $f\in  D_0$, the sequence
		$(\langle \eta^{(n)}_{\,\,\,\cdot},  f \rangle)_{n\in \mathbb{N}^*}$  is C-tight
		in $\mathcal{D}(\mathbb{R}_+, \mathbb{R})$. 
	\end{enumerate}
	\medskip
	
	We  first  prove  the compactness property~\ref{it:CCC}.   According  to
	Assumption~\ref{hyp:A1}-\ref{it:init},      the     random      sequence
	$(\eta^{(n)}_0)_{n\in  \mathbb{N}^*}$ converges  in  distribution  to $\eta_0$  (in
	$\mathcal{M}_1$  endowed  with the  topology  of  the  weak
	convergence).  Recall $\mu^{(n)}$  is the marginal of  $\eta^{(n)}_0$ on $\mathbb{X}$,
	see~\eqref{eq:marginal-mu};  and $\mu$  is the  marginal of  $\eta_0$ on
	$\mathbb{X}$,  see~\eqref{def:eta0}.    We  deduce  that  the   random  sequence
	$(\mu^{(n)})_{n\in   \mathbb{N}^*}$  converges weakly and in  distribution   to  $\mu$   (in
	$\mathcal{M}_1(\mathbb{X})$).  Since  $\mathcal{M}_1(\mathbb{X})$ is a Polish  space, Prohorov's theorem
	implies  that this  sequence is  tight,  and thus, for all $\varepsilon>0$, there
	is a compact set $K_{\varepsilon}$ in $\mathbb{X}$ such that:
	\[
	\sup_{n\in \mathbb{N}^*} \mathbb{P}\Bigg(\mu^{(n)}(K_{\varepsilon}^c) > \varepsilon\Bigg)<\varepsilon.
	\]
	Since the marginal on $\mathbb{X}$ of $\eta^{(n)}_t$ is also $\mu^{(n)}$, we deduce
	\ref{it:CCC} with the compact set $K_{\varepsilon}\times\{S,
	I\}$:
	\[
	\sup_{n\in \mathbb{N}^*} \mathbb{P}\Big(  \eta ^{(n)}_t (K_{\varepsilon} ^c \times \{S,I\}) > \varepsilon\text{ for
		all $t\in \mathbb{R}_+$} \Big)<\varepsilon.
	\]
	
	We  now prove  \ref{it:proj} on  the tightness  of the  projections.  By
	Assumption  \ref{hyp:A1}-\ref{it:dcv-gamma},   there  exists   a  finite
	positive constant $C_\gamma$  and $n_0 \in \mathbb{N}^*$ large  enough such that
	$\sup_{n\geq   n_0}\|\gamma^{(n)}\|_\infty\leq   C_\gamma$   and 
	$\|\gamma\|_\infty\leq  C_\gamma$. Without loss of  generality, we can
	assume that $n_0=1$.
	
	We  consider the
	processes $\tilde V^{(n)}=(\tilde V^{(n)}_t)_{t\in
		\mathbb{R}_+}$ and $\tilde A^{(n)}=(\tilde A^{(n)}_t)_{t\in
		\mathbb{R}_+}$ defined by:
	\begin{equation}
		\label{eq:def-V-A}
		\tilde V^{(n)}_t=2\|f\|_\infty (C_\gamma+ \mathcal{J}^{(n)})\, t
		\quad\text{and}\quad
		\tilde A^{(n)}_t=\frac{4}{n}\|f\|_\infty^2 (C_\gamma+ \mathcal{J}^{(n)})\,t . 
	\end{equation}
	Thanks  to  Lemma~\ref{lem:betatight},  we   deduce  that the sequences  of
	processes $(\tilde  V^{(n)})_{n\in \mathbb{N}^*}$  and $(\tilde  A^{(n)})_{n\in \mathbb{N}^*}$
	are C-tight in $\mathcal{D}(\mathbb{R}_+, \mathbb{R})$ (as  they are tight with all the possible
	limits being  a.s.  continuous).   Following \cite[Section~VI.3]{jacod},
	we say that a non-negative  non-decreasing, right-continuous and null at
	$0$ process is an ``increasing  process''.  Let $\mbox{Var} (V^{(n)})$ denote the
	variation process of $V^{(n)}$ defined in~\eqref{VeNt}.  Since $\eta^{(n)}$ is
	a    probability     measure,    we    deduce    that     the    process
	$\tilde   V^{(n)}-  \mbox{Var}   (V^{(n)})$  is   an  increasing   process.   Thanks
	to~\eqref{MMeNt}, the  process $\tilde A^{(n)}-  \langle M^{(n)}\rangle$ is  also an
	increasing process.  We deduce from Propositions~VI.3.35 in \cite{jacod}
	that    the    sequences    $(    \mbox{Var}    (V^{(n)}))_{n\in    \mathbb{N}^*}$    and
	$(\langle  M^{(n)}\rangle)_{n\in  \mathbb{N}^*}$  are  C-tight.   Then,  we  deduce  from
	Propositions~VI.3.36 therein that the  sequence $( V^{(n)})_{n\in \mathbb{N}^*}$ is
	C-tight.  Since  the sequence $(\eta^{(n)}_0)_{n\in \mathbb{N}^*}$  is tight thanks
	to  Assumption~\ref{hyp:A1}-\ref{it:init},   we  also   deduce  from
	Theorem~VI.4.13         therein         that        the         sequence
	$(\eta^{(n)}_0+ M^{(n)})_{n\in  \mathbb{N}^*}$ is  tight.  Then  use Corollary~VI.3.33
	therein        to         deduce        that         the        sequence
	$(\eta^{(n)}=\eta^{(n)}_0+ V^{(n)}+ M^{(n)})_{n\in \mathbb{N}^*}$ is tight.
	
	Eventually,  notice  that  the   sequence  $(\tilde  A^{(n)})_{n\in  \mathbb{N}^*}$
	converges    to     $0$,    which    implies    that     the    sequence
	$(\langle   M^{(n)}\rangle)_{n\in   \mathbb{N}^*}$    and   thus   the   sequence
	$(M^{(n)})_{n\in \mathbb{N}^*}$ converge also to 0  a.s. and in $L^1$ uniformly for
	$t\in [0,  T]$, $T$ finite. Since  $( V^{(n)})_{n\in \mathbb{N}^*}$ is  C-tight, we
	also  deduce that  $(\langle \eta^{(n)}_{\,\,\,\cdot},  f \rangle)_{n\in \mathbb{N}^*}$   is C-tight.   Then take  for
	$D_0$ the set of bounded continuous measurable  functions defined on $\mathbb{X}\times\{S, I\}$ to deduce
	that \ref{it:proj} holds.
	\medskip
	
	We deduce from 
	\cite[Theorem II.4.1]{perkins} that the sequence 
	$(\eta^{(n)})_{n\in \mathbb{N}^*}$ is C-tight  in~$\mathcal{D}$. 
\end{proof}

As a by-product of the above proof, we get the
following result.

\begin{cor}
	\label{cor:cv-M-0}
	Suppose  the assumptions of Proposition~\ref{pr_tG1} hold.
	Let  $f$ be a  bounded measurable
	function   defined  on $\mathbb{X}\times  \{S, I\}$. Then,
	the sequence of
	martingales $(M^{(n)})_{n\in \mathbb{N} ^*}$ defined by~\eqref{MeNt}, is such that for all
	$T\in \mathbb{R}_+$:
	\[
	\lim_{n\rightarrow \infty }     \mathbb{E}\Bigg[\sup_{t\in [0, T]}
	\Big(M^{(n)}_t\Big)^2\Bigg]=0.
	\]
\end{cor}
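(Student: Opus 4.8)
The plan is to combine Doob's $L^2$ maximal inequality with the pathwise control on the predictable quadratic variation $\LAg M\eN\RAg$ already obtained in the proof of Proposition~\ref{pr_tG1}. For fixed $n$, the process $M\eN$ defined by~\eqref{MeNt} is a square-integrable martingale null at $0$: conditionally on $\cx\eN$ the jump rates are bounded on $\bar\cx\eN$ (as in the proof of Proposition~\ref{prop:micro}) and $\E[\cj\eN]<\infty$ by Lemma~\ref{lem:betatight}, so no localisation issue arises (alternatively one argues by monotone convergence along the successive jump times). Hence
\[
\E\Big[\sup_{t\in[0,T]}\big(M\eN_t\big)^2\Big]\;\le\; 4\,\E\big[(M\eN_T)^2\big]\;=\;4\,\E\big[\LAg M\eN\RAg_T\big],
\]
using Doob's inequality and the fact that $(M\eN_t)^2-\LAg M\eN\RAg_t$ is a martingale.

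It then remains to show that $\E\big[\LAg M\eN\RAg_T\big]\to 0$. Recall from~\eqref{eq:def-V-A} that $\tilde A\eN_t=\frac{4}{n}\Ninf{f}^2(C_\gamma+\cj\eN)\,t$ and that $\tilde A\eN-\LAg M\eN\RAg$ was shown to be an increasing process (this uses the explicit expression~\eqref{MMeNt} and that $\eta\eN$ is a probability measure). Therefore $\LAg M\eN\RAg_T\le \frac{4}{n}\Ninf{f}^2(C_\gamma+\cj\eN)\,T$ pathwise, and taking expectations together with the bound $\E[\cj\eN]\le C_w$ from Lemma~\ref{lem:betatight} gives
\[
\E\big[\LAg M\eN\RAg_T\big]\;\le\;\frac{4T}{n}\,\Ninf{f}^2\,(C_\gamma+C_w),
\]
which tends to $0$ as $n\to\infty$. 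This proves the corollary.

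In this case there is essentially no obstacle: every ingredient — the explicit quadratic variation~\eqref{MMeNt}, its domination by $\tilde A\eN$, and the uniform-in-$n$ bound $\E[\cj\eN]\le C_w$ — is already established in the preceding arguments, and the only point deserving a line of justification is that $M\eN$ is a genuine (not merely local) square-integrable martingale, so that the identity $\E[(M\eN_T)^2]=\E[\LAg M\eN\RAg_T]$ is licit.
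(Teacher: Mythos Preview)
Your proof is correct and follows essentially the same route as the paper: bound $\LAg M\eN\RAg_T$ by $\tilde A\eN_T$ using~\eqref{eq:def-V-A}, take expectations via~\eqref{eq:EJ-bound}, and conclude with Doob's $L^2$ maximal inequality. The only addition you make is the explicit justification that $M\eN$ is a genuine square-integrable martingale, which the paper takes for granted.
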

\begin{proof}
	At  the end  of the  proof  of Proposition~\ref{pr_tG1},  we get  that
	$  \langle M^{(n)}\rangle_t  \leq \tilde  A^{(n)}_t$,  with $\tilde A^{(n)}$  defined
	in~\eqref{eq:def-V-A}. Then  use  Doob's inequality  for  the  square
	integrable martingale $M^{(n)}$ and~\eqref{eq:EJ-bound} to conclude.
\end{proof}

\subsection{Identification of the equation characterizing the limiting values}
\label{sec_cvg}\hfill\\
Under the hypotheses of Proposition~\ref{pr_tG1}, the sequence $(\eta^{(n)})_{n\in \mathbb{N}^*}$
is C-tight in $\mathcal{D}$. Let $\bar \eta=(\bar{\eta}_t)_{t\in \mathbb{R}_+}\in
\mathcal{D}$ be a possible limit in distribution. In particular, the function $t\mapsto \bar \eta _t$ is continuous.

By construction,  we  have that  $\eta^{(n)}_t(dx \times \{S, I\})$  is  equal to  $\mu^{(n)}$,
see~\eqref{eq:marginal-mu}   for   all   $t\in  \mathbb{R}_+$.   According   to
Assumption~\ref{hyp:A1}-\ref{it:init},           the           sequence
$(\mu^{(n)})_{n\in \mathbb{N}^*}$  converges in distribution to  $\mu$.  Since the
map  $\nu   \mapsto  \nu(dx\times \{S,I\})$  from  $\mathcal{M}_1$  to
$\mathcal{M}_1(\mathbb{X})$  is continuous,  and  since $\bar  \eta$  is continuous  in
$\mathcal{D}$, we deduce that a.s.:
\begin{equation}
	\label{eq:densite-eta-bar}
	\bar \eta_t(dx\times \{S,I\})=\mu(dx) 
	\quad\text{for all $t\in \mathbb{R}_+$}. 
\end{equation}

We state in the next proposition that $\bar{\eta}\in \mathcal{D}$ is a (continuous) solution to:
\begin{equation}
	\label{eq:eta-sol}
	\Psi_{f,t}(\nu)=0
	\quad\text{for all $t\in \mathbb{R}_+$},
\end{equation}
for any real-valued continuous bounded function $f$ defined on
$\mathbb{X}\times\{S,I\}$, where for $\nu=(\nu_t)_{t\in \mathbb{R}_+}\in \mathcal{D}$:
\begin{multline}
	\label{eq:def-Psi}
	\Psi_{f,t}(\nu)
	=	\langle \nu_t- \nu_0\, \big| \, f\rangle 
	- \int_0^t dr\,  \int_{\mathbb{X}} \nu_r(dx, I)\, \gamma(x)\cdot  (f(x,  S) - f(x, I))\\
	- \int_0^t dr\, \int_{\mathbb{X}} \nu_r(dx, S) \int_{\mathbb{X}} \nu_r(dy, I)
	w(x, y) \cdot (f(x, I) - f(x, S)).
\end{multline}

\begin{prop}[Property of the limiting process]
	\label{prop:identify}
	Under    Assumptions~\ref{hyp:A1}   and~\ref{hyp:techn},    any
	limiting  value $\bar{\eta}$  of $(\eta^{(n)})_{n\in  \mathbb{N}^*}$ in  $\mathcal{D}$ is
	a.s.\ continuous and solution of Equation \eqref{eq:eta-sol}.
\end{prop}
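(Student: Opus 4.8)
The plan is to pass to the limit in the semi-martingale decomposition~\eqref{eq:martingalepb}--\eqref{MeNt}, but the difficulty is that the drift term $V\eN_t$ in~\eqref{VeNt} involves the sum over edges $n^{-1}\sum_{i\sim j} w\eN_I(x_i,x_j)\,\idc{E^i_s=S,E^j_s=I}$, which is \emph{not} a continuous functional of $\eta\eN_s$ alone: it depends on the realized graph $G\eN$, not just the empirical measure. So the first and main step is to replace the true process $\eta\eN$ by the auxiliary process $\widetilde\eta\eN$ that evolves on the \emph{complete} graph with infection rate $w\eN(x,y)/n = w\eN_E(x,y)w\eN_I(x,y)$ per ordered pair; by Lemma~\ref{lem:we=1+simple}, this auxiliary process has a drift that \emph{is} a nice continuous functional of $\widetilde\eta\eN_s$, namely $\int\!\int (f(x,I)-f(x,S))\,w\eN(x,y)\,\widetilde\eta\eN_s(dy,I)\,\widetilde\eta\eN_s(dx,S)$. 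Using the Poissonian construction of Section~\ref{section:IBM} (same driving measures $Q_R,Q_I$), one couples $\eta\eN$ and $\widetilde\eta\eN$ and shows that the two processes stay close: heuristically the discrepancy between ``infection along a sampled edge at rate $w\eN_I$'' and ``infection at rate $w\eN_E w\eN_I$ on the complete graph'' is controlled by the fluctuations of the graph around its mean, and the total number of such discrepant events has expectation governed by $\cin(w\eN_I\wedge 1)\to 0$ (Assumption~\ref{hyp:techn}). I expect this coupling estimate — proving $\E\big[\sup_{t\le T}\big|\LAg\eta\eN_t - \widetilde\eta\eN_t\bv f\RAg\big|\big]\to 0$, which is exactly what the Appendix~\ref{sec:appendix} is for — to be the main obstacle.

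Granting the coupling, the second step is routine. Write out $\Psi_{f,t}$ for the auxiliary process: by~\eqref{eq:martingalepb} and~\eqref{VeNt-complete},
\[
\LAg\widetilde\eta\eN_t\bv f\RAg - \LAg\widetilde\eta\eN_0\bv f\RAg
= \widetilde V\eN_t + \widetilde M\eN_t,
\]
where $\widetilde V\eN_t$ equals the drift in~\eqref{eq:def-Psi} but with $\gamma\eN,w\eN,\widetilde\eta\eN_r$ in place of $\gamma,w,\nu_r$, and $\widetilde M\eN$ is a martingale with quadratic variation of order $1/n$ (as in~\eqref{MMeNt-complete}), hence $\E[\sup_{t\le T}(\widetilde M\eN_t)^2]\to 0$ by Doob's inequality, exactly as in Corollary~\ref{cor:cv-M-0}. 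So $\Psi^{(n)}_{f,t}(\widetilde\eta\eN):=\LAg\widetilde\eta\eN_t-\widetilde\eta\eN_0\bv f\RAg - \widetilde V\eN_t = \widetilde M\eN_t \to 0$ in $L^1$.

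The third step is to identify the limit. By the coupling, $\widetilde\eta\eN$ and $\eta\eN$ have the same limiting values in $\cd$; so if $\eta\eN\to\bar\eta$ along a subsequence, then $\widetilde\eta\eN\to\bar\eta$ as well. One then checks that $\nu\mapsto\Psi_{f,t}(\nu)$, and more precisely the finite-horizon functional $\nu\mapsto\sup_{t\le T}|\Psi_{f,t}(\nu)|$, is continuous on $\cd$ at continuous paths $\nu$: the term $\LAg\nu_t-\nu_0\bv f\RAg$ is continuous because evaluation at continuity points is continuous in Skorokhod topology and $\bar\eta$ is a.s.\ continuous; the two time-integral terms are continuous because $f$ is bounded continuous, $\gamma$ is bounded and $\mu$-a.e.\ continuous, $w$ is bounded and $\mu^{\otimes2}$-a.e.\ continuous (Assumption~\ref{hyp:A1}), the marginal of $\nu_r$ on $\X$ is $\mu$ by~\eqref{eq:densite-eta-bar}, and the map $\nu\mapsto\int_0^t\!\int g\,d\nu_r\,dr$ is continuous by dominated convergence (the a.e.\ discontinuities of $\gamma$ and $w$ are $\mu$-null). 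The only subtlety is replacing $(\gamma\eN,w\eN)$ by $(\gamma,w)$ inside $\widetilde V\eN$, which is handled by the \emph{uniform} convergence hypotheses in Assumption~\ref{hyp:A1}-\ref{it:dcv-w},\ref{it:dcv-gamma} together with $\nu_r,\widetilde\eta\eN_r\in\cm_1$. Combining: $\Psi_{f,t}(\bar\eta) = \lim_n \Psi^{(n)}_{f,t}(\widetilde\eta\eN) = 0$ for each $t$; since both sides are continuous in $t$ and this holds for a countable dense set of $t$, it holds for all $t\in\R_+$, and for all bounded continuous $f$. Continuity of $\bar\eta$ was already recorded just before the statement. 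This yields~\eqref{eq:eta-sol}, completing the proof.
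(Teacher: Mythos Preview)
Your proposal is correct and follows essentially the same three-step route as the paper: couple $\eta\eN$ with an auxiliary process $\widetilde\eta\eN$ on the complete graph with infection rate $w\eN_E w\eN_I$ (the paper's Proposition~\ref{pr_cpl_G1}, proved in the Appendix), show $\Psi_{f,t}(\widetilde\eta\eN)\to 0$ via the clean semi-martingale decomposition of Lemma~\ref{lem:we=1+simple} together with the vanishing martingale and the uniform convergence of $(\gamma\eN,w\eN)$, and then pass to the limit using continuity of $\Psi_{f,t}$ at paths whose $\X$-marginal is $\mu$ (this last point, which you correctly flag via~\eqref{eq:densite-eta-bar}, is exactly how the paper handles the $\mu$-a.e.\ and $\mu^{\otimes 2}$-a.e.\ continuity of $\gamma$ and $w$). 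The only cosmetic difference is that the paper first treats the complete-graph case as a standalone Step~1 and then invokes the coupling, whereas you lead with the coupling; the content is the same.
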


The rest of the section is devoted to the proof of Proposition
\ref{prop:identify}.   It  is  divided  into several  steps:  first,  we
consider the case  of a complete graph with  $w^{(n)}_E\equiv1$, second, in
the heterogeneous  case, we  construct a  coupling between  the epidemic
process $\eta^{(n)}$ and an epidemic process $\widetilde{\eta}^{(n)}$ defined on the complete graph,
but with a modified infection rate, third we end the proof.

\subsubsection*{Step 1: Limiting equation for the epidemic on the complete
	graph ($w^{(n)}_E\equiv 1$)}

In this section, we assume that  $w^{(n)}_E\equiv 1$. Thanks to
Lemma~\ref{lem:we=1+simple}, we get from \eqref{eq:martingalepb} and
\eqref{VeNt-complete} that:
\begin{multline*}
	\Psi_{f,t}(\eta^{(n)})=  M^{(n)}_t
	+ \int_0^t dr \,  \int_{\mathbb{X}}  \eta^{(n)}_r(dx, S) 
	\int_{\mathbb{X}} \eta^{(n)}_r(dy, I)\;
	(w^{(n)}(x,y)-w(x, y))\cdot 
	(f(x, I) - f(x, S))\\
	+ \int_0^t dr \, \int_{\mathbb{X}}  \eta^{(n)}_r(dx, I)\;
	(\gamma^{(n)}(x)-\gamma(x))\cdot  (f(x,  S) - f(x, I)).
\end{multline*}
By Assumption \ref{hyp:A1}-\ref{it:dcv-w},\ref{it:dcv-gamma}, the following upper-bound holds:
\[
|\Psi_{f,t}(\eta^{(n)})|\leq 
|M^{(n)}_t|+ 2t \|f\|_\infty \big(\|w^{(n)}-w\|_\infty +
\|\gamma^{(n)}-\gamma\|_\infty\big).
\]
Then use Corollary~\ref{cor:cv-M-0} and the uniform convergence of
$\gamma^{(n)}$ and $w^{(n)}$ to $\gamma$ and $w$ to conclude that
$\lim_{n\rightarrow \infty } \mathbb{E}\Big[ |\Psi_{f,t}(\eta^{(n)})| \Big]=0$. 
\medskip

Since    the    functions    $w$    and    $\gamma$    are    continuous
$\mu^{\otimes 2}$-a.s.\  and $\mu$-a.s.\,  we deduce  that the  functional
$\Psi_{f,t}$ is continuous at any $\nu\in \mathcal{D}$ such that $\nu_s(dx\times \{S,I\})$ is
absolutely  continuous  with  respect  to   $\mu$  for  a.e.  $s\in  [0,
t]$. Thanks  to~\eqref{eq:densite-eta-bar}, a.s.\ for all  $t\geq 0$ the
probability  measure  $\bar\eta_t(dx\times \{S,I\})$  is absolutely  continuous  with
respect  to $\mu(dx)$.  We  deduce that  $\Psi_{f,t}(\eta^{(n)})$ converges  in
distribution     to      $\Psi_{f,t}(\bar     \eta)$,      and     since
$|\Psi_{f,t}(\eta^{(n)})|\leq 2\|f\|_\infty\cdot \Big(1+t C_w  + t C_\gamma\Big)$, see~\eqref{eq:def-Psi},
we                              deduce                              that
$   \lim_{n\rightarrow    \infty   }    \mathbb{E}\Big[   |\Psi_{f,t}(\eta^{(n)})|
\Big]=\mathbb{E}\Big[    |\Psi_{f,t}(\bar    \eta)|   \Big]$.
This concludes that
$ \mathbb{E}\Big[ |\Psi_{f,t}(\bar  \eta)| \Big]=0$, that is,  $\bar \eta$ is
a.s.\ solution of~\eqref{eq:eta-sol}.

\subsubsection*{Step 2: Coupling for heterogeneous contact graphs  ($w^{(n)}_E\neq 1$)}

Now  we consider  the general  case  of the  epidemic process  $\eta^{(n)}$
associated to  connection density $w^{(n)}_E$, infection  rate $w^{(n)}_I$
and recovery  rate $\gamma^{(n)}$.  We  shall couple the  process $\eta^{(n)}$
with an epidemic process $\widetilde{\eta}^{(n)}$  on a complete graph associated
to  connection density  $\widetilde{w}^{(n)}_E\equiv  1$,  infection  rate
$\widetilde{w}^{(n)}_I=w^{(n)}_I     w^{(n)}_E$     and      same     recovery     rate
$\widetilde{\gamma}^{(n)}=\gamma^{(n)}$. The two epidemic processes shall also start
with the  same initial condition $\eta^{(n)}_0=\widetilde{\eta}^{(n)}_0$.   The next
proposition,     whose    technical     proof     is    postponed     to
Section~\ref{sec:appendix}, gives  for a  well chosen coupling  an upper
bound on the distance in variation between the two epidemic processes.

We denote by $\|\nu\|_{TV}$  the total variation norm of a signed measure
on $\mathbb{X}\times\{S, I\}$.
Recall the
functional $\mathcal{I}_n$ in~\eqref{def_HI}.

\begin{prop}[A control using coupling]
	\label{pr_cpl_G1}
	Under Assumption~\ref{hyp:A1}, for  all $T\geq 0$, there  exists a finite
	constant    $C_T$ (independent of $n$)   and    a    sequence    of    epidemic    processes
	$(\widetilde{\eta}^{(n)})_{n\in \mathbb{N}^*}$ on a complete graph (with $\widetilde{\eta}^{(n)}$
	associated to  connection density  $\widetilde{w}^{(n)}_E\equiv  1$, infection
	rate       $\widetilde{w}^{(n)}_I=w^{(n)}_I        w^{(n)}_E$,        recovery       rate
	$\widetilde{\gamma}^{(n)}=\gamma^{(n)}$         and        initial         condition
	$\widetilde{\eta}^{(n)}_0= \eta^{(n)}_0$) such that:
	\begin{equation*}
		\mathbb{E}\Bigg[\sup_{t\in [0,T]} \|\eta^{(n)}_t - \widetilde{\eta}^{(n)}_t\|_{TV}\Bigg] 
		\le C_T\,  \mathcal{I}_n(w^{(n)}_I\wedge 1).
	\end{equation*}
\end{prop}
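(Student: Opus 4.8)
The plan is to build $\widetilde{\eta}\eN$ on the same probability space as $\eta\eN$, driven by the same Poisson measures $Q_R,Q_I$ and the same initial data, and then to estimate $\sup_{t\le T}\NTV{\eta\eN_t-\widetilde{\eta}\eN_t}$ through the number of individuals on which the two processes currently disagree. For the recoveries one uses for $\widetilde{\eta}\eN$ the very acceptance rule $A\eN(i,u,s)$ of $\eta\eN$ (it depends only on the feature $x_i$ and on the current state of $i$), so that a recovery atom triggers the same transition in both processes whenever they agree on $i$, and can only re-synchronise them otherwise. For the infections, since $\widetilde{w}\eN_I=w\eN_I\,w\eN_E\le w\eN_I$, one couples the transmission clocks so that — beyond what is forced by the prescribed marginal law of $\widetilde{\eta}\eN$ — an atom transmitting in $\widetilde{\eta}\eN$ also transmits in $\eta\eN$, and vice versa, as often as possible; the residual mismatches are of two kinds only, a transmission along an edge absent from $G\eN$, and the surplus transmission rate of $\eta\eN$ along a present edge. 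One checks by Poisson thinning that, marginally, $\widetilde{\eta}\eN$ is indeed the epidemic on the complete graph ($\widetilde{w}\eN_E\equiv1$) with infection rate $\widetilde{w}\eN_I$ and recovery rate $\gamma\eN$.

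Set $\Delta\eN_t=\#\{\,i\le n:\ E^i_t\ne\widetilde{E}^i_t\,\}$; then $\NTV{\eta\eN_t-\widetilde{\eta}\eN_t}\le\frac2n\,\Delta\eN_t$. Reading off the SDE~\eqref{SDE:etaN} and its analogue for $\widetilde{\eta}\eN$, the process $\Delta\eN$ moves by $\pm1$: recoveries never increase it, and an infection atom increases it only when it is accepted by exactly one of the two processes \emph{and} its target switches from an agreeing to a disagreeing state. Hence $\sup_{t\le T}\Delta\eN_t$ is dominated by the total number of up-jumps on $[0,T]$, whose expectation equals that of the corresponding compensator, so that
\[
\E\Big[\sup_{t\le T}\NTV{\eta\eN_t-\widetilde{\eta}\eN_t}\Big]\ \le\ \frac2n\int_0^T\E\big[\,\Lambda\eN_r\ +\ C_w\,\Delta\eN_r\,\big]\,dr ,
\]
where $\Lambda\eN_r$ is the rate at which \emph{new} discrepancies are created out of an otherwise-agreeing configuration, and the feedback term $C_w\,\Delta\eN_r$ bounds the expected rate at which a currently discrepant individual begets new discrepancies: one infected in only one of the two processes transmits ``extra'' infections and, after averaging over the edges of $G\eN$, at total rate at most $\sup_{x}\frac1n\sum_{j\le n}w\eN(x,x_j)\le\Ninf{w\eN}\le C_w$ by Assumption~\ref{hyp:A1}-\ref{it:dcv-w}. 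The heart of the argument is then the estimate
\[
\int_0^T\E[\Lambda\eN_r]\,dr\ \le\ C_T\,n\,\cin\!\big(w\eN_I\wedge1\big) ,
\]
which I obtain by a careful accounting of the mismatched infection atoms: one passes from rates to probabilities — a given pair $(i,j)$ is ever involved in a transmission on $[0,T]$ with probability at most $1\wedge(w\eN_I(x_i,x_j)\,T)\le\max(1,T)\,(w\eN_I(x_i,x_j)\wedge1)$ — one uses the identity $n\,w\eN_E(x,y)\,w\eN_I(x,y)=w\eN(x,y)\le C_w$ (so that $w\eN_E(x,y)(w\eN_I(x,y)\wedge1)\le C_w/n$), and one discards the mismatches whose target is already infected in both processes, as these are harmless; put together, this turns the total contribution of all pairs into $\sum_{i,j}\big(w\eN_I(x_i,x_j)\wedge1\big)=n^2\,\cin(w\eN_I\wedge1)$, up to a $T$-dependent factor.

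Plugging the source estimate into the first display and applying Gr\"onwall's lemma (first to $r\mapsto\E[\Delta\eN_r]$, then to the resulting right-hand side) yields $\E\big[\sup_{t\le T}\NTV{\eta\eN_t-\widetilde{\eta}\eN_t}\big]\le C'_T\,\cin(w\eN_I\wedge1)$, as claimed. The delicate point is precisely the source estimate $\int_0^T\E[\Lambda\eN_r]\,dr\le C_T\,n\,\cin(w\eN_I\wedge1)$: it is what dictates the fine structure of the coupling — one must arrange the mismatched infection atoms so that, as much as possible, they fall on targets already infected in both processes — and it is the place where the boundedness of $w\eN=n\,w\eN_E w\eN_I$ is used essentially. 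A crude edge-by-edge coupling would only control the source term by $\cin(w\eN)$, which does not tend to~$0$ in general; recovering the finer control by $\cin(w\eN_I\wedge1)$ is the whole difficulty of this proposition.
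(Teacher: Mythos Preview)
Your overall architecture (build $\widetilde\eta\eN$ on the same probability space, track the number of disagreeing individuals, split the growth of discrepancies into a \emph{source} term and a \emph{feedback} term, then close by Gr\"onwall) is the right one, and the feedback bound via $\Ninf{w\eN}\le C_w$ is fine. The gap is exactly where you say it is: the source estimate
\[
\int_0^T \E[\Lambda\eN_r]\,dr \le C_T\, n\, \cin(w\eN_I\wedge 1).
\]
Under the coupling you actually describe --- $\widetilde\eta\eN$ accepts the atom $(s,i,j,u)$ iff $u\le w\eN_I w\eN_E$, while $\eta\eN$ accepts iff $u\le w\eN_I$ and $i\sim j$ --- the symmetric-difference rate for a pair $(i,j)$, averaged over the edge indicator, is $2\,w\eN_I(x_i,x_j)\,w\eN_E(x_i,x_j)(1-w\eN_E(x_i,x_j))\le 2\,w\eN(x_i,x_j)/n$. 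Summing over $n^2$ pairs and integrating on $[0,T]$ gives a source contribution of order $n\,\cin(w\eN)$, not $n\,\cin(w\eN_I\wedge 1)$. Your two factoids --- ``pair $(i,j)$ is ever involved with probability $\le 1\wedge (Tw\eN_I)$'' and ``$w\eN_E(w\eN_I\wedge 1)\le C_w/n$'' --- do not multiply into the claimed bound: the first is the probability of \emph{at least one arrow}, while the mismatch probability per arrow is of order $w\eN_E$, so the product only recovers $w\eN_E\,w\eN_I\le C_w/n$, i.e.\ again $\cin(w\eN)$. There is no mechanism in your coupling to ``arrange the mismatched atoms to fall on already-infected targets''; the coupling fixes where mismatches occur.

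The paper resolves this with a specific coupling that you are missing. For each unordered pair $\{i,j\}$ one keeps the original edge variable $V(i,j)$ and introduces fresh i.i.d.\ uniforms $V_\ell(i,j)$, $\ell\ge 2$; $\widetilde\eta\eN$ then accepts the $\ell$-th arrow between $i$ and $j$ iff $V_\ell(i,j)\le w\eN_E(x_i,x_j)$, with $V_1(i,j)=V(i,j)$. Thus the \emph{first} arrow on each pair is accepted by $\widetilde\eta\eN$ exactly when it is accepted by $\eta\eN$, and a source mismatch at $i$ via $j$ requires at least \emph{two} arrows on $\{i,j\}$ on $[0,T]$ together with at least one activation. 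This yields, per pair,
\[
\P(\text{source from }(i,j)\text{ on }[0,T])
\ \le\ \theta\,r\,(\theta\wedge 1),
\quad \theta = T\,W\eN_I(x_i,x_j),\ r=w\eN_E(x_i,x_j),
\]
and since $\theta r\le 2T\,C_w/n$ while $\theta\wedge 1\le (T\vee 1)(w\eN_I\wedge 1)$, the extra factor $(w\eN_I\wedge 1)$ appears for free. Summing over pairs gives the source bound $C_T\,n\,\cin(w\eN_I\wedge 1)$; the feedback is then handled by a branching/Gr\"onwall comparison. Without this ``resample after first use'' idea, the source term cannot be made small, and your argument stalls precisely at the point you yourself flag as ``the whole difficulty''.
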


We shortly
explain how the auxiliary  process $\widetilde{\eta}^{(n)}$ is constructed.  The
graph $G^{(n)}$ on  which the epidemic described by  $\eta^{(n)}$ spreads is
encoded   by  the   random  variables   $(V{(i,j)})_{1\leq  i<j}$   of
Section~\ref{section:IBM}.

The dynamics of $\widetilde{\eta}^{(n)}$ can be interpreted as the one for which
this graph  structure is initially  the same  (that is, encoded  by the
random variables $(V{(i,j)})_{1\leq i<j}$) but then each edge involved
in  an infection  event  is  resampled. This  amounts  to  modifying  the
infection rate by multiplying it by  the probability to get the edge,
that is $\widetilde{w}^{(n)}_I=w^{(n)}_I w^{(n)}_E$, and considering that the propagation
holds on a complete graph.

\subsubsection*{Step 3: End of the proof}

Suppose  that the  sequence  $(\eta^{(n)})_{n\in \mathbb{N}^*}$,  which is  C-tight
according to Proposition~\ref{pr_tG1}, converges in distribution along a
sub-sequence   $(n_k)_{k\in   \mathbb{N}^*}$   towards  a   certain   trajectory
$\eta\in  \mathcal{D}$.    Thanks  to  Proposition~\ref{pr_cpl_G1} and Assumption~\ref{hyp:techn},   the  sequence
$(\widetilde{\eta}^{(n_k)})_{k\in  \mathbb{N}^*}$  (of  epidemic process  on  complete
graphs) converges also  in distribution towards $\eta$.   By  Step 1, the
limit $\eta$  is solution to Equation~\eqref{eq:eta-sol}.  Thus, all the
limiting values of  the sequence $(\eta^{(n)})_{n\in \mathbb{N}^*}$  are solutions of
Equation~\eqref{eq:eta-sol}. This ends the proof of
Proposition~\ref{prop:identify}. 

\subsection{Uniqueness of the limit}
\label{sec:uniqueness}\hfill\\
We  shall   prove  the   existence  and   uniqueness  of   the  solution
of~\eqref{eq:eta-sol}.   The measurable  non-negative   functions $w$  and
$\gamma$  defined respectively  on $\mathbb{X}^2$  and  $\mathbb{X}$ are  assumed to  be
bounded.  We first  comment on the existence and  uniqueness of solution
to the ODE~\eqref{uT} in the  Banach space $(\mathcal{L}^\infty , \|\cdot\|_\infty$
of  bounded  measurable  functions  defined on  $\mathbb{X}$  endowed  with  the
supremum  norm.   Let   $\Delta\subset  \mathcal{L}^\infty  $  be   the  set  of
non-negative functions bounded  by 1.  Under the  further assumption that
$\gamma$ is positive, it is  proved in \cite[Section~2]{DDZ22a} that for
an initial condition  $u_0\in \Delta$, the  ODE~\eqref{uT} admits a
unique (maximal) solution;  it is defined on $\mathbb{R}_+$ and  takes values in
$\Delta$.   A careful  reading of  \cite{DDZ22a} provides  that one  can
assume that $\gamma$  is only non-negative without  altering the results
nor   the   proofs   of   Section  2   therein.    Thus   the   solution
$(u_t=(u(t,x))_{x\in   \mathbb{X}})_{t\in  \mathbb{R}_+}$   of~\eqref{uT}  with   initial
condition $u_0\in  \Delta$ exists  and is  unique.  

The continuous
$\mathcal{M}_1$-valued  process $\eta=(\eta_t)_{t\in  \mathbb{R}^+}$,
with  deterministic evolution,  defined  by~\eqref{etaDe}  is clearly  a
solution of~\eqref{eq:eta-sol}  for any continuous function  $f$ defined
on   $\mathbb{X}\times\{S,I\}$    with   initial   condition    $\eta_0$   given
by~\eqref{def:eta0}.   We  shall  now  prove  the  following  uniqueness
result.

\begin{lem}[Uniqueness for Equation~\eqref{eq:eta-sol}]
	\label{lem:uniqueness}
	Assume the  measurable non-negative  functions $w$  and $\gamma$ defined
	respectively  on $\mathbb{X}^2$  and $\mathbb{X}$  are bounded.   If
	$\bar   \eta\in  \mathcal{D}$   is   a.s.\  a   continuous  process   solution
	of~\eqref{eq:eta-sol}  (for  any  real-valued continuous bounded   function  $f$
	defined on  $\mathbb{X}\times\{S,I\}$) with initial condition  $\eta_0$ given
	by~\eqref{def:eta0}, then a.s.\ $\bar \eta=\eta$.
\end{lem}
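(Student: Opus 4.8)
\medskip

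The plan is to prove a Gronwall‐type uniqueness statement directly for Equation~\eqref{eq:eta-sol}: if $\nu^{(1)},\nu^{(2)}\in\cd$ are two solutions (a.s.\ continuous, for every real-valued continuous bounded $f$ on $\X\times\{S,I\}$) sharing the initial condition $\eta_0$ of~\eqref{def:eta0}, then $\nu^{(1)}=\nu^{(2)}$. Since, as recalled above, the continuous $\cm_1$-valued process $\eta$ of~\eqref{etaDe} built from the unique solution $u$ of~\eqref{uT} is itself a solution of~\eqref{eq:eta-sol} with initial condition $\eta_0$, applying this with $\nu^{(1)}=\bar\eta$ and $\nu^{(2)}=\eta$ yields $\bar\eta=\eta$, and in particular the density representation~\eqref{etaDe}. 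Throughout I argue on the almost sure event on which $\bar\eta$ is a continuous solution, so the argument below is deterministic.

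First I would read off from~\eqref{eq:eta-sol} the scalar equations that are actually needed. Taking $f(x,e)=g(x)$ with $g$ bounded continuous on $\X$ makes the two terms carrying $f(x,S)-f(x,I)$ vanish, so $\langle\nu^{(i)}_t|g\rangle=\langle\eta_0|g\rangle$ for all $t\ge0$, i.e.\ the $\X$-marginal of $\nu^{(i)}_t$ equals $\mu$ for every $t$. Next, taking $f(x,e)=\ind_{\{e=I\}}\,g(x)$ — which is continuous and bounded on $\X\times\{S,I\}$, the latter being a two-point, hence discrete, space — and writing $\nu^{(i),I}_r(dx)=\nu^{(i)}_r(dx,I)$ and $\nu^{(i),S}_r(dx)=\nu^{(i)}_r(dx,S)=\mu(dx)-\nu^{(i),I}_r(dx)$, Equation~\eqref{eq:eta-sol} reduces to the closed integral equation
\[
\langle\nu^{(i),I}_t|g\rangle=\langle\eta_0^I|g\rangle+\int_0^t dr\int_\X g(x)\,\big(F^{(i)}_r(x)\,\nu^{(i),S}_r(dx)-\gamma(x)\,\nu^{(i),I}_r(dx)\big),
\]
where $F^{(i)}_r(x)=\int_\X w(x,y)\,\nu^{(i),I}_r(dy)$ satisfies $\Ninf{F^{(i)}_r}\le\Ninf{w}$ since $\nu^{(i),I}_r(\X)\le1$.

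Then I would set $\Delta_r=\nu^{(1),I}_r-\nu^{(2),I}_r$, so that $\nu^{(1),S}_r-\nu^{(2),S}_r=-\Delta_r$, subtract the two integral equations and linearize the quadratic term via $F^{(1)}_r\,\nu^{(1),S}_r-F^{(2)}_r\,\nu^{(2),S}_r=-F^{(1)}_r\,\Delta_r+(F^{(1)}_r-F^{(2)}_r)\,\nu^{(2),S}_r$, together with $F^{(1)}_r(x)-F^{(2)}_r(x)=\int_\X w(x,y)\,\Delta_r(dy)$, whence $\Ninf{F^{(1)}_r-F^{(2)}_r}\le\Ninf{w}\,\NTV{\Delta_r}$. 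Using that $\NTV{\Delta_t}$ is the supremum of $\langle\Delta_t|g\rangle$ over bounded continuous $g$ on $\X$ with $\Ninf{g}\le1$ (regularity of finite measures on the Polish space $\X$), and bounding the three resulting terms by $\Ninf{w}\,\NTV{\Delta_r}$, $\Ninf{w}\,\NTV{\Delta_r}\,\nu^{(2),S}_r(\X)\le\Ninf{w}\,\NTV{\Delta_r}$, and $\Ninf{\gamma}\,\NTV{\Delta_r}$ respectively, one gets
\[
\NTV{\Delta_t}\le\big(2\Ninf{w}+\Ninf{\gamma}\big)\int_0^t\NTV{\Delta_r}\,dr\qquad\text{for all }t\ge0.
\]
The map $r\mapsto\NTV{\Delta_r}$ is bounded by $2$ and Borel measurable (composition of the càd-làg trajectories in $\cm_1$ with the Borel total-variation functional), so the integral is finite and Gronwall's lemma forces $\NTV{\Delta_t}=0$ for every $t\ge0$. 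Hence $\nu^{(1),I}_t=\nu^{(2),I}_t$ and $\nu^{(1),S}_t=\mu-\nu^{(1),I}_t=\nu^{(2),S}_t$, i.e.\ $\nu^{(1)}=\nu^{(2)}$, which applied to $\bar\eta$ and $\eta$ gives $\bar\eta=\eta$.

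The argument is otherwise routine; the only delicate points are the measure-theoretic bookkeeping — realizing the total-variation norm as a supremum over continuous test functions and checking time-measurability of $r\mapsto\NTV{\Delta_r}$ so that the Gronwall integral is meaningful — and the linearization of the quadratic infection term, which is where I expect whatever (mild) difficulty there is to lie.
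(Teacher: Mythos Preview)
Your proof is correct and follows essentially the same route as the paper: you test against $f(x,e)=g(x)$ to fix the $\X$-marginal as $\mu$, then against $f(x,e)=g(x)\ind_{\{e=I\}}$ to obtain an integral equation on the $I$-component, linearize the quadratic infection term, and close by Gronwall with the same constant $2\Ninf{w}+\Ninf{\gamma}$. The only cosmetic difference is that the paper first extracts a density $\bar u(t,\cdot)$ with respect to $\mu$ and runs Gronwall in $L^1(\mu)$, whereas you work directly with the signed measure $\Delta_r$ in total variation; since $0\le\nu^{(i),I}_r\le\mu$ forces absolute continuity anyway, the two norms coincide and the arguments are equivalent.
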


\begin{proof}
	Let  $\bar \eta\in \mathcal{D}$  be continuous and a  solution of~\eqref{eq:eta-sol}  with initial
	condition $\eta_0$  given by~\eqref{def:eta0}.  Taking  $f$ such that
	$f(\cdot, S)=f(\cdot, I)$, we deduce from~\eqref{eq:eta-sol} that for
	all $t\in \mathbb{R}_+$ the
	marginals of $\bar \eta_t$ on $\mathbb{X}$ are constant equal to the
	marginal of $\eta_0$ given by $\mu$ (see~\eqref{def:eta0}). This
	implies that for all $t\in \mathbb{R}_+$, the probability measure $\bar \eta_t$ is
	absolutely continuous with respect to $\mu\otimes (\delta_S+
	\delta_I)$, and thus:
	\[
	\bar \eta(dx, de)
	=\mu(dx) \, ((1-\bar u(t, x))\, \delta_S(de)
	+ \bar u(t,x) \, \delta_I(de)),
	\]
	for                some               measurable                function
	$\bar  u=(\bar u_t=(u(t,x))_{x\in  \mathbb{X}})_{t\in \mathbb{R}_+}$  taking values  in
	$[0, 1]$.  Let $g$ be  a real-valued  continuous bounded  function defined  on $\mathbb{X}$
	and $t\in \mathbb{R}_+$.
	Taking       $f(x,e)=g(x)\mathbbm{1}_{\{e=I\}}$,      we       deduce      from
	$ \Psi_{f,t}(\bar \eta)=0$ that $\int_\mathbb{X} g(x)\,\bar U_t(x)\, \mu(dx)=0$ where:
	\[
	\bar U_t(x)=\bar u_t(x) -u_0(x)  
	-  \int_0^t dr\, \Big(
	(1- \bar  u_r(x)) \, \int_\mathbb{X} \mu(dy)\;  w(x,y)\cdot \bar u_r(y) 
	-  \gamma(x)\cdot \bar u_r(x)\Big).
	\]
	Since $g$ and $t$ are arbitrary, we deduce that $\bar U_t=0$ $\mu$-a.e.\ for
	all $t\in \mathbb{R}_+$. 
	Using~\eqref{uT}, we obtain that for all $t\in \mathbb{R}_+$:
	\[
	\|\bar u_t - u_t\|_1 \leq  (2 C_w+ C_\gamma) \int_0^t  dr\, \|\bar
	u_r - u_r\|_1,
	\]
	where $\|\cdot\|_1$ denotes the usual norm on $L^1(\mathbb{X}, \mu)$.
	By Grönwall's Lemma, we deduce that $\|\bar u_t - u_t\|_1=0$ for all
	$t\in \mathbb{R}_+$. This implies that for all $t\in \mathbb{R}_+$, $\mu$-a.e.\ $\bar
	u_t=u_t$, that is $\bar \eta_t=\eta_t$ for all $t\in \mathbb{R}_+$. 
\end{proof}

\section{Simulations and discussion}\label{sec:discussion}
In this section, we will focus on the special case of an homogeneous population to illustrate our main convergence result and to highlight the crucial role of Assumption~\ref{hyp:techn}.

\subsection{Simulations setup: Erdős–Rényi  graph and homogeneous infection rate}
\hfill\\
We  consider the Erdős–Rényi  graph, which  amounts to
taking $w^{(n)}_E$ homogeneous  over the population, and  thus depending only
on $n$, not on the feature.  
We consider an infection rate $w^{(n)}_I$ which is homogeneous over the population, too,
so  that $w^{(n)}=n\, w^{(n)}_Ew^{(n)}_I$ is homogeneous and converges to a constant transmission kernel $w$.
For simplicity, the initial condition $u_0$ is also taken to be constant.
In this case, Equation~\eqref{eq:intro-uT} 
prescribes the density
(identified with the proportion)
of the infected population,
now uniform over the features. 
We thus recover the classical SIS model introduced
by Kermack and McKendrick 
(in \cite{KM32, KM33}, as a special case): 
\begin{equation}
	\partial _t u(t)=w\cdot(1-u(t))\cdot   u(t) - \gamma\cdot u(t).  
	\label{KM_SIS}
\end{equation}
If $w=0$, then there
is asymptotically no propagation of the epidemics: our  results also cover this degenerate case, which will not be considered further.
Assuming that the  model is  not degenerate,  that is
$w>0$, the most natural choice is setting $w^{(n)}=w$.
For the resulting Erdős–Rényi  graph $G^{(n)}$,
edges in the graph of $n$ vertices 
are thus kept with probability:
\[
w^{(n)}_E = \frac{w}{n\, w^{(n)}_I}\cdot
\]
Assumption~\ref{hyp:techn} corresponds to
$\lim_{n\rightarrow
	\infty }   w^{(n)}_I=0$, that is,  $\lim_{n\rightarrow
	\infty } n\,  w^{(n)}_E=+\infty $. 
Notice that  $n\, w^{(n)}_E$ is the expected number of  neighbors (for
any  individual),   which  is   purely  a   local  quantity.
\begin{rem}[Assumption~\ref{hyp:techn} and connectivity] 
	\label{rem:Erdos-Renyi}   
	We compare the condition  
	$\lim_{n\rightarrow \infty } n\, w^{(n)}_E=+\infty $ 
	with the well-known connectivity properties of Erdős–Rényi graphs, which have a sharp connectivity threshold when $n\, w^{(n)}_E $ is around $\log n$
	(see e.g. \cite[Theorem~5.8]{vanderhofstad}).
	Erdős–Rényi graphs may exhibit the so-called super-critical regime (see e.g. \cite[Section~4.4]{vanderhofstad}): if $n\, w^{(n)}_E\geq c>1$, with a probability
	that converges  to 1  as $n$  tends to infinity, then there exists  a 
	connected  component,  called  the  giant component,  which  contains  a
	positive  fraction  of  the  $n$ vertices, while the second-largest component has at most $O( \log n)$ vertices.  
	Furthermore, if $n\, w^{(n)}_E\geq d \log(n)$,  with a probability that converges to  1 as $n$ tends to  infinity, then there are  only a giant component  and isolated vertices if $d>1/2$, and the graph is connected if $d>1$ (see \cite[Section~5.3, Proposition~5.10 and Theorem~5.8]{vanderhofstad}).
	Instead, in the super-critical  regime with bounded $n\, w^{(n)}_E$, the size of the  second largest component
	is of order  $\log(n)$ (see \cite[Section~4.4]{vanderhofstad}).

	Based on this description, we can observe that Assumption~\ref{hyp:techn} does not imply the graph to be connected.
	However, Assumption~\ref{hyp:techn} implies that the graph is in the super-critical regime. 
	At the same time, the super-critical regime also includes some graphs which we qualify of `very sparse', where $n\, w^{(n)}_E$ is constant and hence  Assumption~\ref{hyp:techn} fails.
	We will consider the latter case in Sect.~\ref{sec_Vsparse}, and show that the mean-field approximation is not good in this regime.
	\hfill $\Box$ \end{rem}
\medskip

For the simulations, we shall consider
the numerical values $w=3$ and $ \gamma=0.7$.
The reproduction number is given by:
\[
R_0=\frac{w}{\gamma}\simeq 4.3.
\]
Since $R_0>1$, we get that $$\lim_{t\rightarrow \infty } u(t)=1
-1/R_0=:u_*$$ and that $u(t)\approx u_* + C \mathrm{e}^{- (R_0 -1)t}$ for large~$t$
and some  finite constant $C$, provided the initial condition
$u(0)$ is positive. In particular, on the time-interval $I_0=[20, 80]$, the
function $u$ is numerically constant and equal to the equilibrium $u_*$.

Let $u^{(n)}(t)$ denote  the  proportion  of  the infected
population  at  time $t\geq 0$, that  is,  $u^{(n)}(t)=\eta^{(n)}_t(\mathbb{X}, I)$  in  the
terminology of Section~\ref{sec:IBM}. 
For the simulations, we use the Gillespie algorithm to sample exact
trajectories (up to numerical limitations), with the 
initial population  completely infected, that is,  $u^{(n)}(0)=1$.

\begin{rem}
	As in general 
	the  asymptotic equilibrium is not unique
	(notably when the graph is not connected),
	starting from the initial condition $u_0 = 1$
	shall provide the estimation on the  asymptotic equilibrium
	that is maximal in terms of the infected proportion of individuals
	(cf \cite{DDZ22a}, provided the stochasticity does not move the process away from it).
	As starting simulations with any positive initial conditions 
	does not change  the results significantly, 
	we shall only consider $u_0=1$ as initial condition.
\end{rem}

In the simulations we will either consider a fixed population size $n_0=2\,  000$,
or show plots for growing values of $n$.
When studying the different sparsity regimes
as in Remark~\ref{rem_epsN}, we will consider
$w^{(n)}_E\asymp n^{a-2}$, which corresponds to $w^{(n)}_I\asymp n^{-a+1}$; for ease of notation,
we will rather use the following parameter instead of~$a$:
\[
\alpha=a-1\geq 0 \quad\text{so that}\quad w^{(n)}_E\asymp n^{\alpha-1}
\quad\text{and}\quad w^{(n)}_I\asymp n^{-\alpha},
\]
in order to focus on
the difference
between the case $\alpha>0$ (Assumption~\ref{hyp:techn} holds) and the very
sparse case $\alpha=0$ (Assumption~\ref{hyp:techn} fails).
More precisely, 
in all plots concerning the asymptotic behavior $w^{(n)}_I \asymp n^{-\alpha}$ we will use the values
\[ w^{(n)}_I= 1.2\,  (n/n_0)^{-\alpha}. \]

\subsection{Numerical observation of the convergence}
\hfill\\
In this section, we illustrate how the solution $u$  of \eqref{KM_SIS}
(or rather its equilibrium value $u_* = \lim_{t \to \infty} u(t)$, which is attained by $u(t)$ within numerical precision for time $ t \ge 20$) 
is a good approximation of $u^{(n)}$ whenever $w^{(n)}_I$ is small enough, consistently with the convergence of $u^{(n)}$ to $u$, for $n \to \infty$, if $w^{(n)}_I \to 0$ (i.e., Assumption~\ref{hyp:techn} holds).

\subsubsection{The proportion $u^{(n)}(t)$  of infected population and the limit $u_*$}
\hfill\\
\begin{figure}
	\centering
	\begin{subfigure}[b]{0.48\textwidth}
		\centering
		\includegraphics[width=\textwidth]{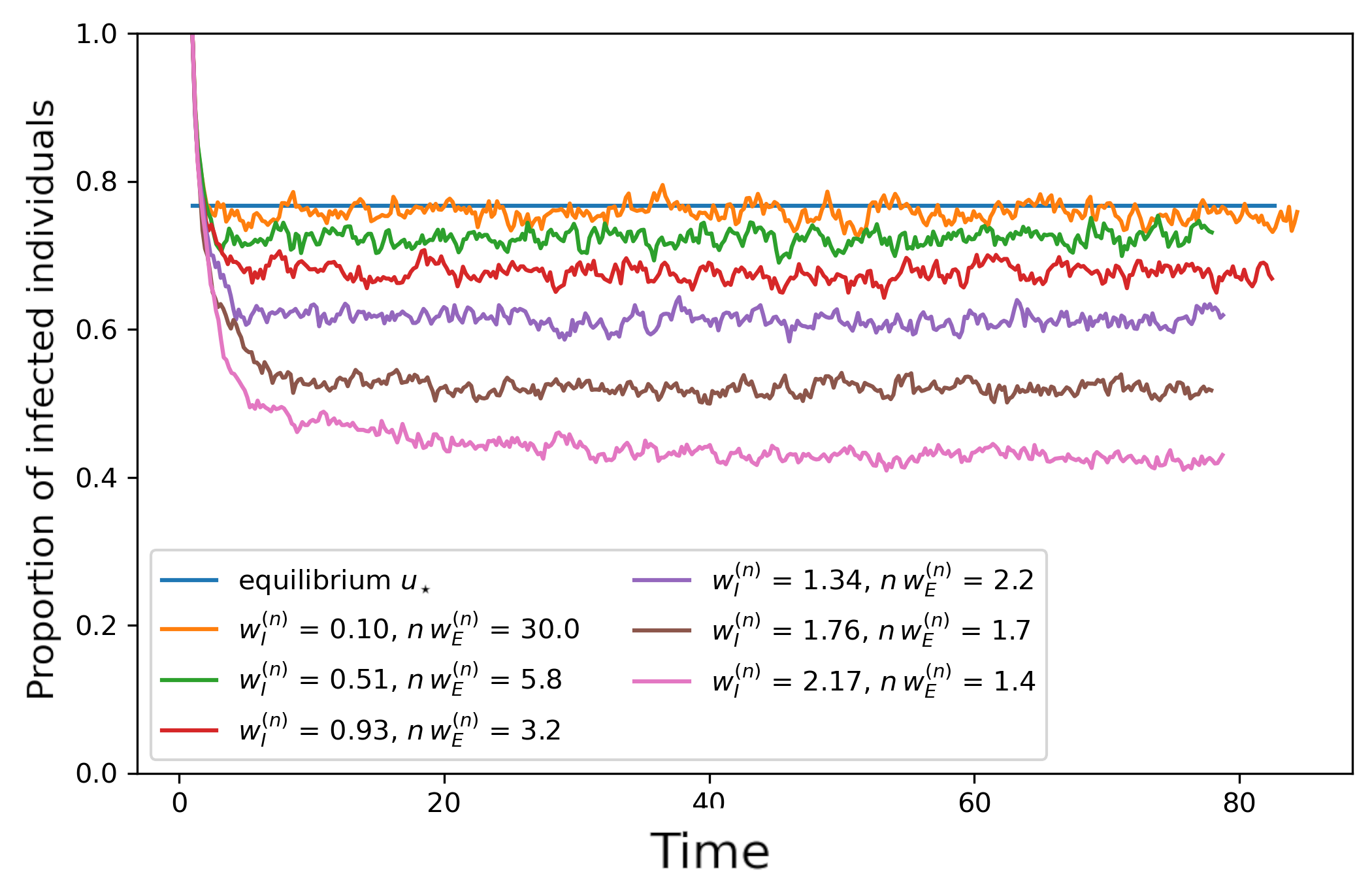}
		\caption{Population size $n=n_0= 2,000$ is fixed: for $t$ larger than~10, $u^{(n)}(t)$ is closer to $u_*= 1-\gamma/w$ when~$w^{(n)}_I$ is smaller.}
	\end{subfigure}
	\hfill
	\vrule
	\hfill
	\begin{subfigure}[b]{0.48\textwidth}
		\centering
		\includegraphics[width=\textwidth]{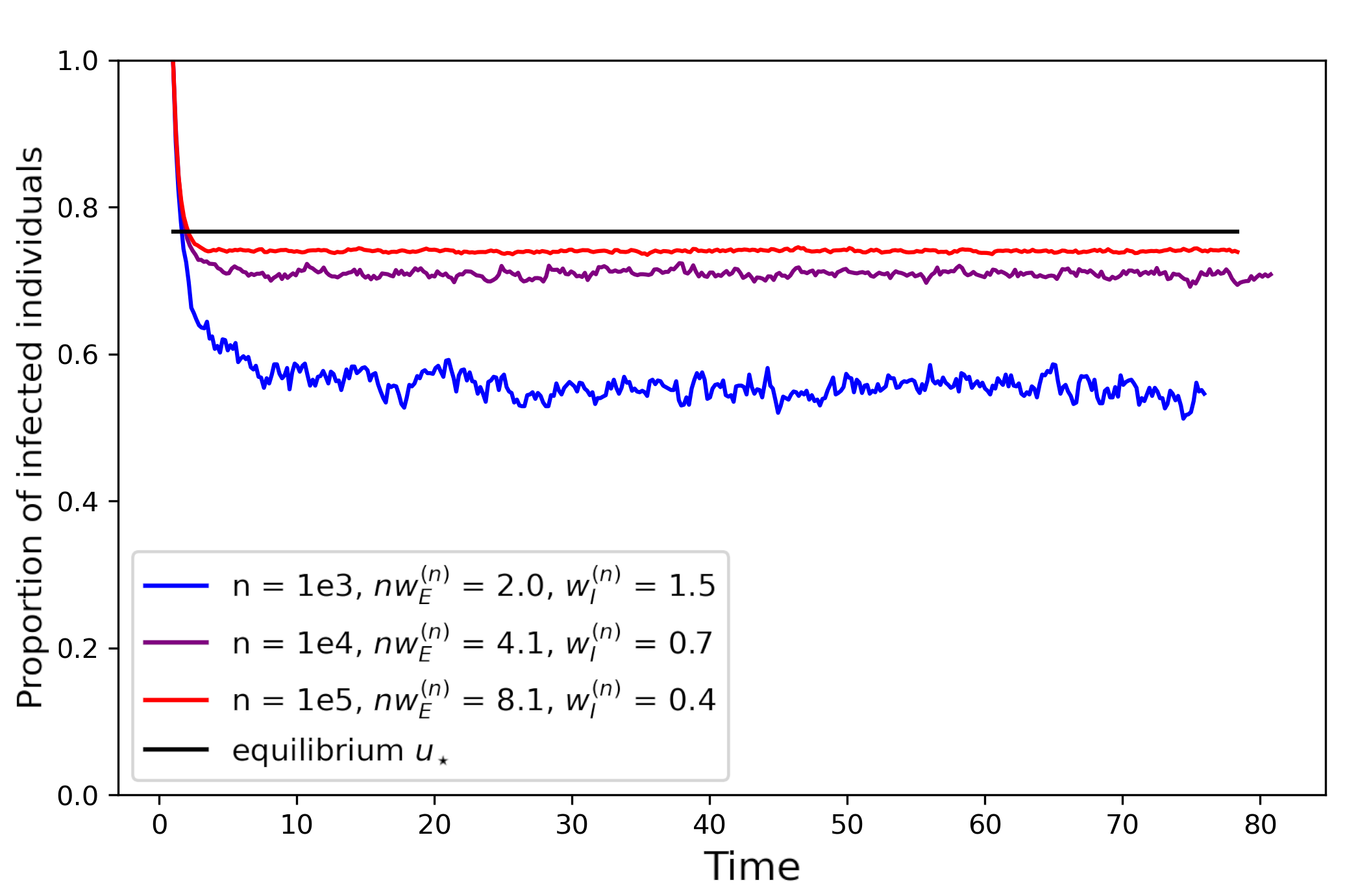}
		\caption{$w^{(n)}_I \asymp n^{-\alpha}$ with   $\alpha = 0.3$, for different values of~$n$.}
	\end{subfigure}
	\caption{Evolution of $t\mapsto u^{(n)}(t)$ for different choices of $w^{(n)}_I$.}
	\label{fig_cvg}
\end{figure}

In  Fig.~\ref{fig_cvg}, we  represent the evolution of $u^{(n)}(t)$ for $t\in [0, 80]$: each curve corresponds to a single run on one sampled graph.  In the left panel, we consider  a  population  of  size $n=n_0$,  with  $n_0=2\,  000$,  and
different values  of $w^{(n)}_I$:
as expected, one can clearly  see  that  the deviation of $u^{(n)}$  from  $u_*$ 
is small, compared to the temporal fluctuations, when $w^{(n)}_I$ is close to $0.1$  and increases to much higher values as $w^{(n)}_I$ grows closer to $2$.
In the  right panel, we  consider $w^{(n)}_I\asymp n^{-\alpha}$ 
for $\alpha=0.3$  and different  values  of  $n$: one  can  clearly see   
that the discrepancy is reduced as  $w^{(n)}_I$ gets closer to~0. 
For $n=10^5$,
the temporal fluctuations are almost not visible on the plot, 
while the discrepancy with $u_*$ is small but still noticeable.

Fig.~\ref{fig_cvg} shows that,  for  large
population size  $n$, the trajectory  of $u^{(n)}$ has small  fluctuations, which
seem  much smaller than  the discrepancy between $u^{(n)}$  and
$u_*$.
For this reason, we choose to investigate the temporal fluctuations of $u^{(n)}$, which can be interpreted  as  an internal variance term, in Section~\ref{sect:internal-noise}
and the discrepancy between a temporal average of $u^{(n)}$
and $u_*$, which can be interpreted as a bias term, in Section~\ref{sect:bias}.

\subsubsection{An internal noise:  the temporal fluctuations of  $u^{(n)}$ for large population size $n$}\label{sect:internal-noise}
\hfill\\

\begin{figure}
	\centering
	\begin{subfigure}[b]{0.48\textwidth}
		\centering
		\includegraphics[width=\textwidth]{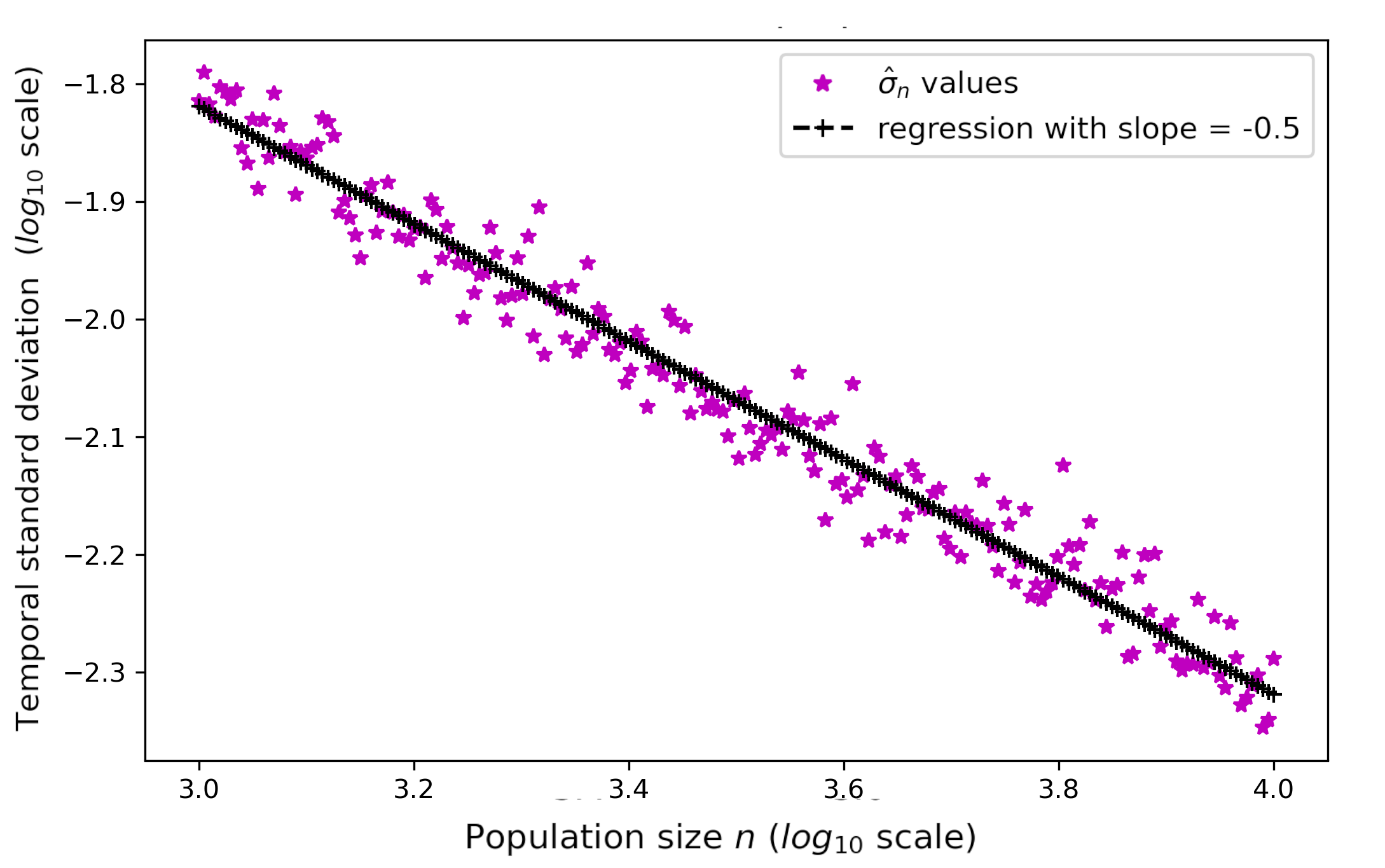}
		\caption{\centering $\alpha = 0.3$.}
	\end{subfigure}
	\hfill
	\vrule
	\hfill
	\begin{subfigure}[b]{0.48\textwidth}
		\centering
		\includegraphics[width=\textwidth]{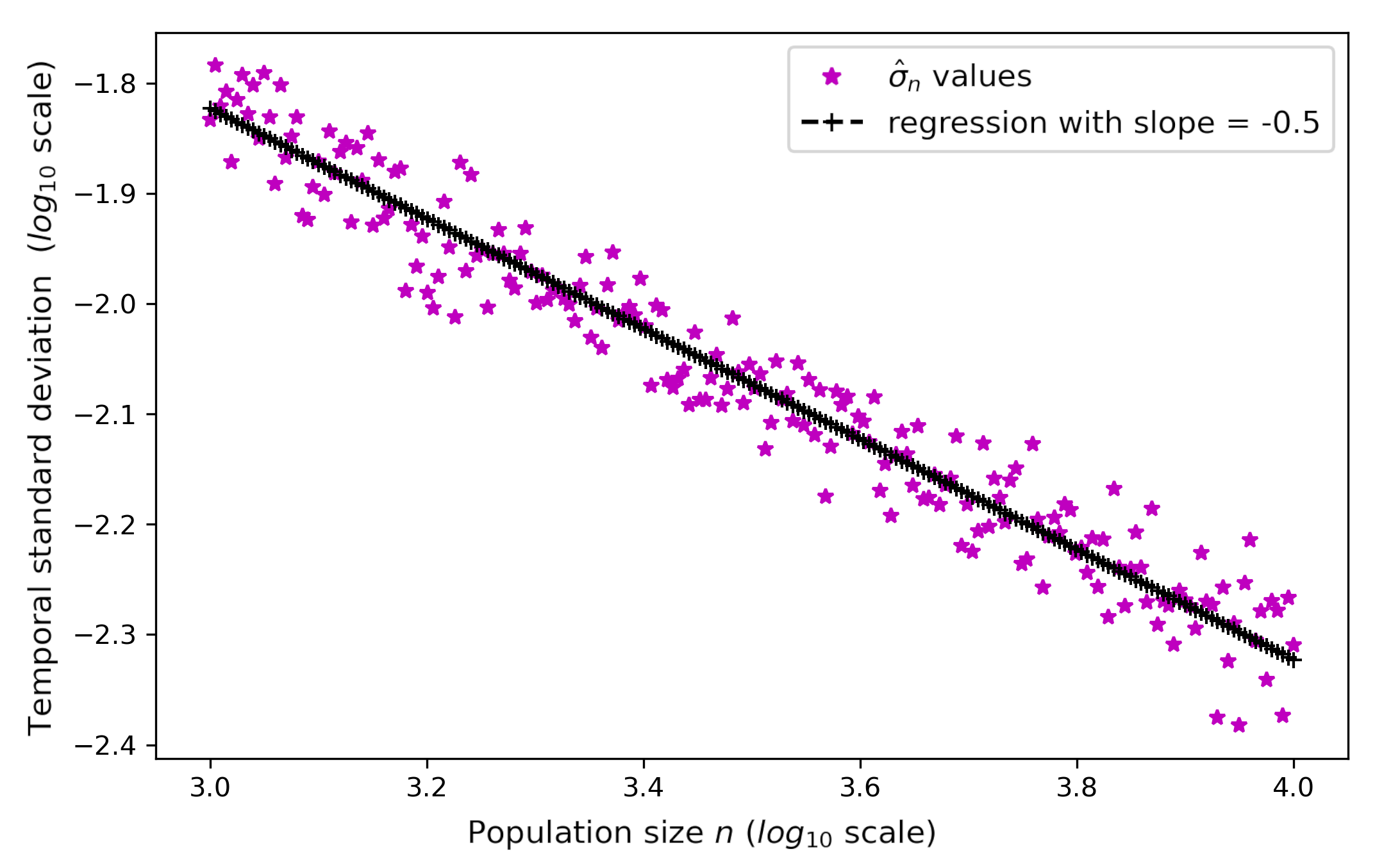}
		\caption{\centering $\alpha =  0$.}
	\end{subfigure}
	\caption{Temporal fluctuations: 
		temporal standard deviation $\hat \sigma_n$ of the proportion of infected individuals (defined in \eqref{eq:tempo-av}), and comparison with the decay $1/\sqrt{n}$,  when $w^{(n)}_I\asymp n^{-\alpha}$.  Each  star point  is obtained from a single run.}
	\label{fig_tpl_std}
\end{figure}

In order to measure the temporal  fluctuations of the  proportion  of
infected  individuals,  we
compute the standard  deviation of the data points  in the time-interval
$I_0=[20,  80]$. 
More precisely, the  random
temporal  average  $\hat  u^{(n)}_*$  and  the corresponding  standard  deviation
$\hat{\sigma}_n$ of $u^{(n)}$ over the time interval $I_0$ of length $|I_0|=60$ are given
by:
\begin{equation}
	\label{eq:tempo-av}
	\hat u^{(n)}_*=\frac{1}{|I_0|} \, \int_{I_0} u^{(n)}(t)\, dt
	\quad\text{and}\quad
	\hat{\sigma}_n^2= \frac{1}{|I_0|} \, \int_{I_0} \Big(u^{(n)}(t)- \hat
	u^{(n)}_*\Big)^2\, dt. 
\end{equation}

This  computation is
motivated by  the Orstein-Uhlenbeck  description of the  fluctuations in
the compartmental SIS  model, corresponding to $w^{(n)}_E=1$,  which are known to be of
order  $1/\sqrt{n}$            (see             \cite[Theorem
2.3.2]{ballbrittonlaredopardouxsirltran}).  
In  Fig.~\ref{fig_tpl_std},  we plot  the temporal  standard deviation
$\hat \sigma_n$ as a function of the population size $n$.
The  resulting relation  is compared  to the  prediction of  the central
limit  theorem  for   mean-field  interactions,  that  is   a  decay  as
$1/\sqrt{n}$.          We  consider  the  sparse  case  $\alpha  =  0.3$  
in  the left  panel, and the  very sparse
case  $\alpha  =0$  (so that  $w^{(n)}_I  =1.2$ for all $n$)  
in  the  right  panel.   The
prediction of a decay as $1/\sqrt{n}$  is clearly confirmed, even in the
very sparse case.

\subsubsection{The bias of the temporal average of  $u^{(n)}$ for different sparsity levels}\label{sect:bias}
\hfill\\
\begin{figure}
	\centering
	\begin{subfigure}[b]{0.48\textwidth}
		\centering
		\includegraphics[width=\textwidth]{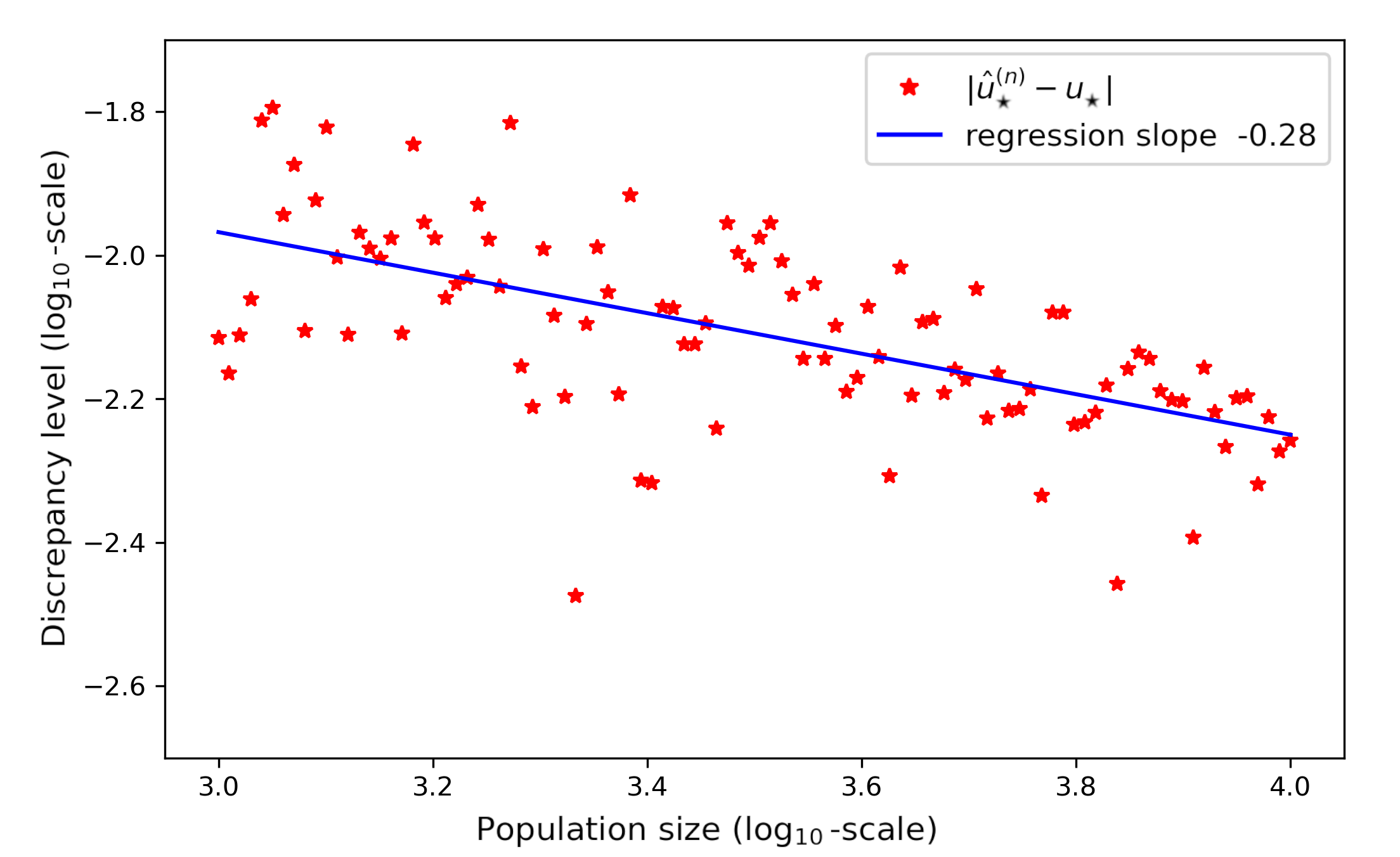}
		\caption{$|\hat{u}^{(n)}_* -  u_*|$ for growing population size $n$ (in log-log scale)
			and regression slope,  for $\alpha = 0.3$.\\
			$R^2\approx 0.38$ is the coefficient of determination corresponding 
			to the proportion of variance 
			captured by the prediction with a slope of $-0.3$
			(and adjusted averages).}
	\end{subfigure}
	\hfill
	\vrule
	\hfill
	\begin{subfigure}[b]{0.48\textwidth}
		\centering
		\includegraphics[width=\textwidth]{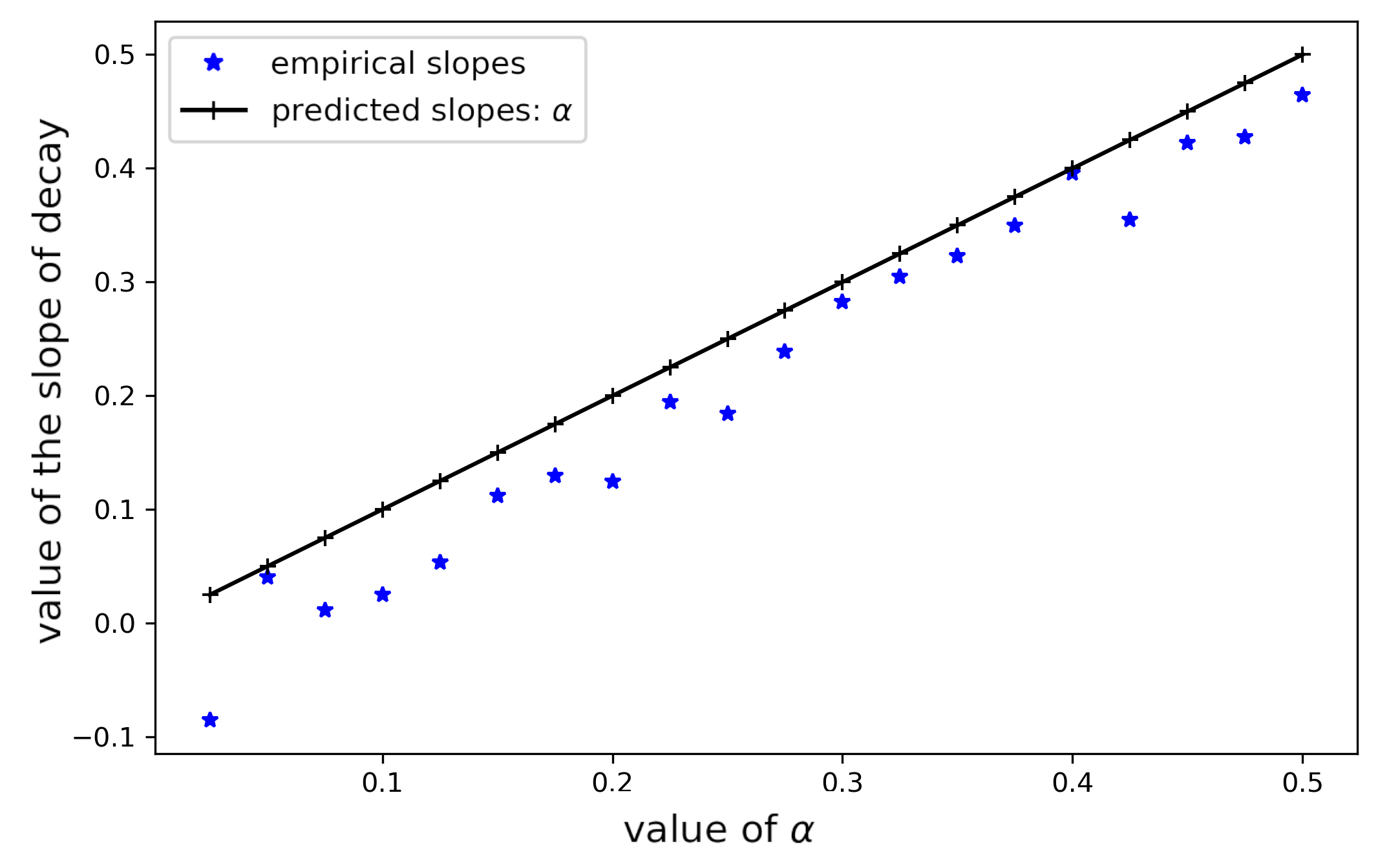}
		\caption{Slopes of the log-log regressions of $|\hat{u}^{(n)}_* -  u_*|$ vs.~$n$, 
			for different values of $\alpha$.\\
			$R^2\approx 0.97$ is the coefficient 
			of determination corresponding 
			to the proportion of variance captured by the prediction of slopes given by exactly $-\alpha$.}
	\end{subfigure}
	\caption{Bias term: $|\hat{u}^{(n)}_* -  u_*|$ and its decay rate for growing population size when $w^{(n)}_I\asymp n^{-\alpha}$,
		and comparison with the decay rate 
		of $\log|\hat{u}^{(n)}_* -  u_*|$ as $-\alpha \log(n)$.}
	\label{fig_alpha_cvg_rate}
\end{figure}

To obtain  more reliable estimations  of the decay in  the discrepancy
between the random process $u^{(n)}$ and the equilibrium $u_*$, we focus in 
Fig.~\ref{fig_alpha_cvg_rate} on the
discrepancy between  the 
random temporal average $\hat  u^{(n)}_*$ 
and $u_*$.
In the  left  panel of Fig.~\ref{fig_alpha_cvg_rate}, we thus
plot the deviation
$|\hat{u}^{(n)}_* -  u_*|$  in a  log-log scale  as a  function of
population size~$n$,  for $\alpha = 0.3$. Each of the $N=100$  points
is obtained from a single run
(with the same relation for $w_I^{(n)}$ as in Fig.~\ref{fig_tpl_std}). 
We then find the regression slope of such points, which turns out to be -0.28, not far from  $-\alpha = -0.3$. 

In the  right panel, we plot these decay slopes for different values of $\alpha$. Each point in the  right panel is an empirical regression slope of $|\hat{u}^{(n)}_* - u_*|$ in $n$ with a log-log scale, obtained with the same procedure used in left panel  for $\alpha=0.3$, requiring $N=100$ runs.
Based on our analysis, we expect that $n^{-\alpha}$ is the maximal level of deviation that can be due to the graph structure, in view of the bias term in the  coupling argument (see Proposition \ref{pr_cpl_G1}); whereas $1/\sqrt{n}$ is the
internal fluctuations level in the martingale term (see Equation~\ref{eq:martingalepb})
that dominates the deviations in the mean-field case (see Fig.~\ref{fig_tpl_std}).
Provided $\alpha <0.5$, the bias term 
$n^{-\alpha}$ is large against the fluctuation term $1/\sqrt{n}$.
This intuition is confirmed in the right panel of Fig.~\ref{fig_alpha_cvg_rate}
where the empirical slopes are very close to a linear growth in $\alpha$, so long as $\alpha\leq 0.5$.
The quality of this approximation of linear growth is also quantified,
as indicated in the legend,
by the coefficient of determination $R^2 \approx 0.97$,
very close to 1.

Regression slopes for values of $\alpha$ 
larger than $0.5$
have also been computed
and their values suggest that this linear curve
should be extended.
This observation is still compatible 
with the conjectured order of variation 
of the bias and of the fluctuations
due to the presence of multiplying factors,
so that the values of $n$ in the simulations are 
presumably too small 
to observe  the predominance of the latter.
For this reason, 
we restrict the right panel to $\alpha\leq 0.5$.

\subsection{The very sparse case}
\label{sec_Vsparse}
\hfill\\
We  consider the  very sparse  case  $\alpha=0$ (and  $a=1$), where  the
number  of edges  in the  graph is  of the same order  as the  size of  the
population:  $n\, w^{(n)}_E\asymp   1$,  and  thus  $w^{(n)}_I\asymp   1$  (since
$n\cdot w^{(n)}_E\cdot  w^{(n)}_I=w=3$).  In  particular Assumption~\ref{hyp:techn} fails.
By means of  simulations, we observe that  the integro-differential equation \eqref{uT} is not a  reliable description of the  dynamics for
$n$ large in this case, thus highlighting the importance of Assumption~\ref{hyp:techn}.

More precisely, we consider constant $w^{(n)}_I$ (and hence constant $n\, w^{(n)}_E = w /w^{(n)}_I$), for values of $w^{(n)}_I$ that will be specified below but remain smaller than $w=3$.
Notice that this falls in the very sparse case, where Assumption~\ref{hyp:techn} fails; the corresponding Erd\H{o}s-R\'enyi graph is below the connectivity threshold and in the super-critical regime $n\, w^{(n)}_E\geq c>1$, where there is a unique giant component containing a fraction of the population, and all other components have a size growing at most as $\log n$.

The intuition suggests that the infection dies out rapidly 
rapidly on all components other than the giant one (due to their small size), 
so that the effective population is rather
given by  the size of the  giant component  and  not the size of the
whole population. For this reason, we also study the proportion of infected
individuals at time $t$ in the giant component, which we will denote by $v^{(n)}(t)$
to avoid confusion with $u^{(n)}(t)$. 
To remove the fluctuations, 
we consider the  temporal averages $\hat u^{(n)}_*$ as
in~\eqref{eq:tempo-av} and $\hat v^{(n)}_*$ with analogous definition, replacing $u^{(n)}$ by $v^{(n)}$. We compare both $\hat u^{(n)}_* $ and $\hat v^{(n)}_*$ with $u_*$.

Our simulation results are shown in Fig.~\ref{fig_sparse}. Each  point  in  Fig.~\ref{fig_sparse}  
is derived from the average in time,
either $\hat u_*^{(n)}$ in purple or $\hat v_*^{(n)}$ in red,
over a single run of epidemic.
Intervals of  fluctuations that account for twice  the temporal standard deviations (either $2 \hat \sigma_n$
or the analogous value estimated 
only over the individuals belonging to the giant component)
are displayed with whiskers of the corresponding color.
These intervals of fluctuations
demonstrate that these temporal fluctuations play a negligible role in the trends.

\begin{figure}
	\centering
	\begin{subfigure}[b]{0.48\textwidth}
		\centering
		\includegraphics[width=\textwidth]{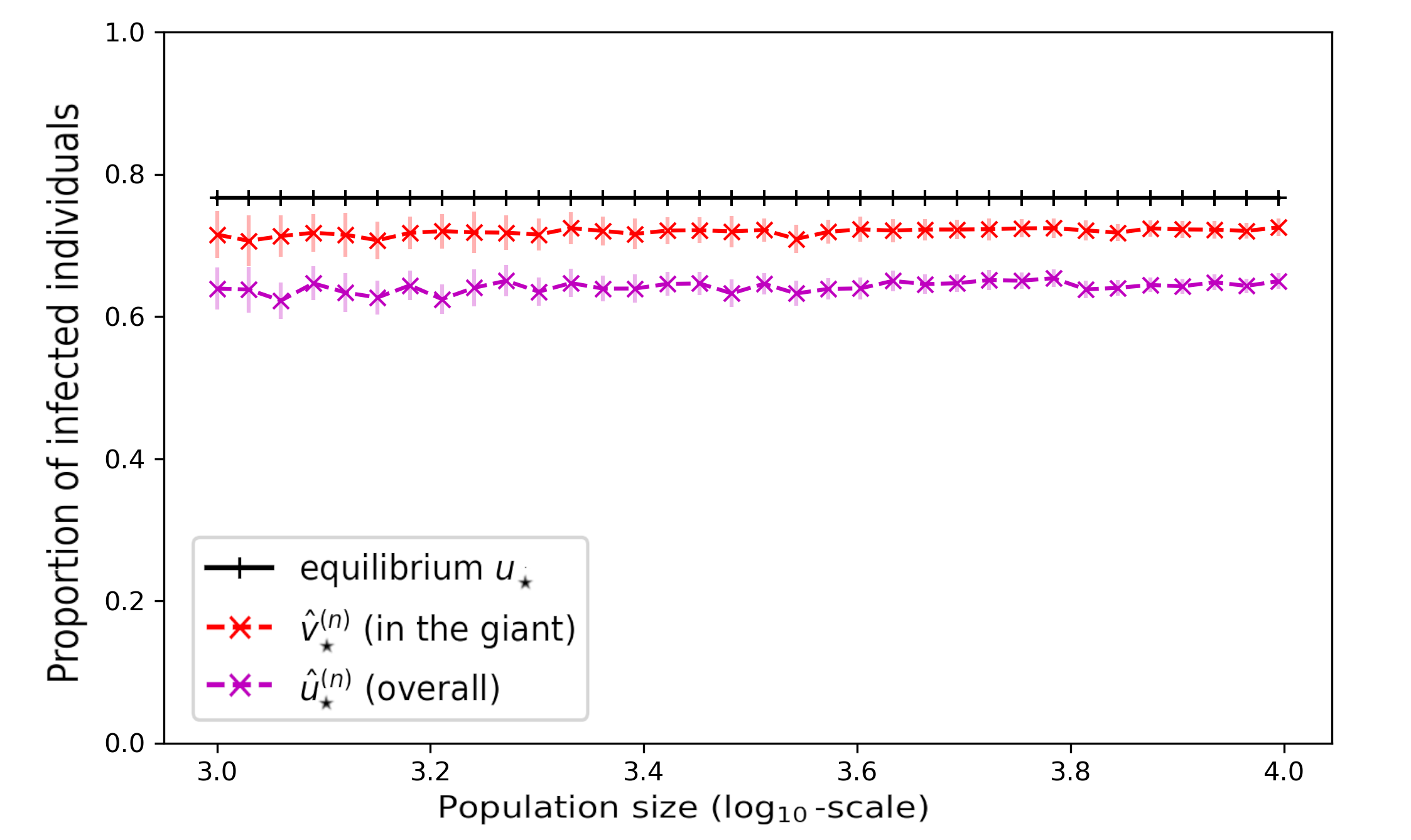}
		\caption{Temporal average of the proportion of infected individuals
			for growing population size $n$ (in log-scale),
			with $w^{(n)}_I =1.2$ and   $n\cdot w^{(n)}_E=2.5$.}
	\end{subfigure}
	\hfill
	\vrule
	\hfill
	\begin{subfigure}[b]{0.48\textwidth}
		\centering
		\includegraphics[width=\textwidth]{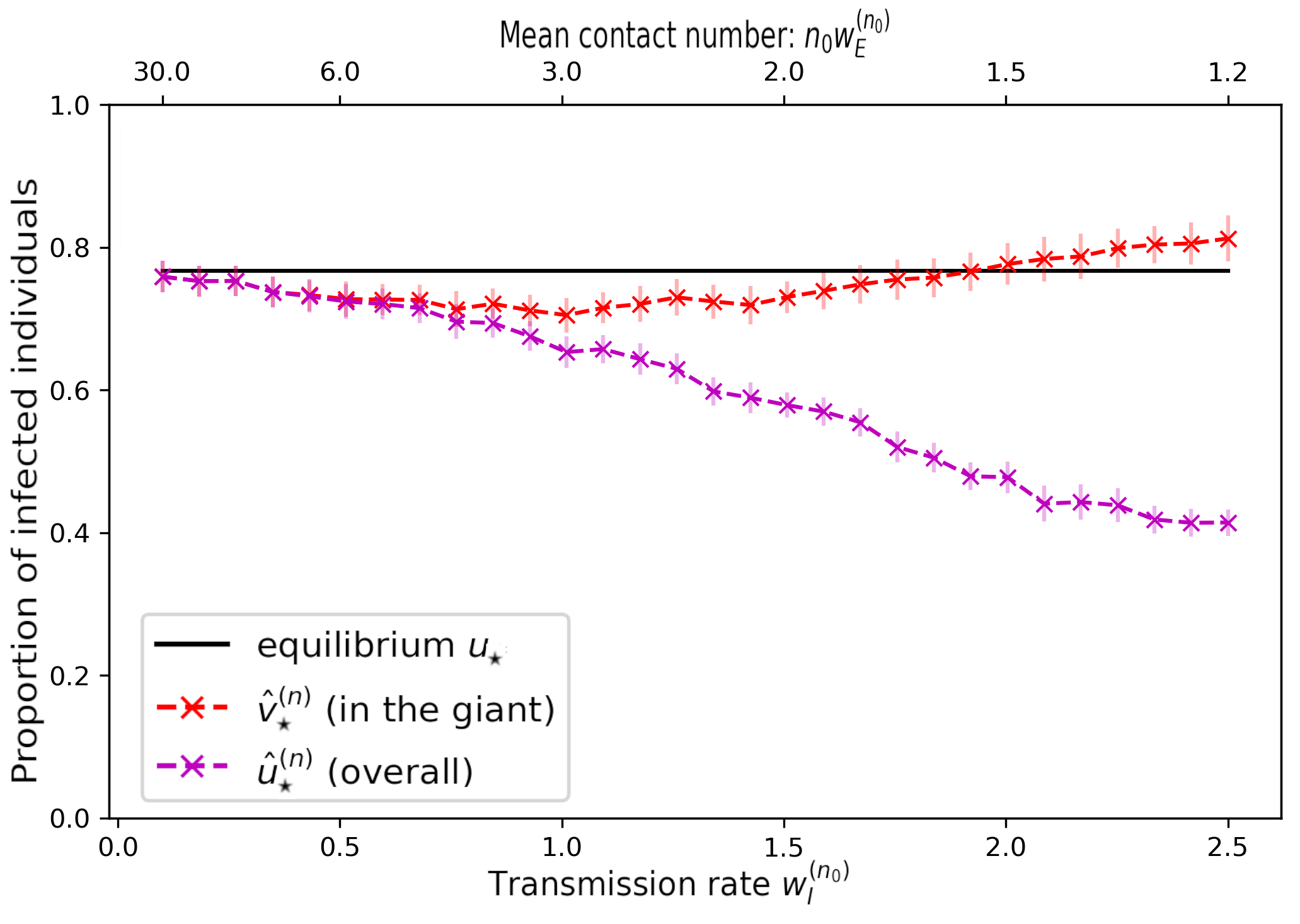}
		\caption{Temporal  average of the proportion of infected individuals  as  a  function  of
			$w^{(n_0)}_I$,  with   fixed  population size
			$n_0$ and $w^{(n_0)}_E \cdot w^{(n_0)}_I=3/n_0$.}
	\end{subfigure}
	\caption{Very sparse yet super-critical regime: $n \cdot w^{(n)}_E = c$, with $c>1$. 
		Temporal averages $\hat  u^{(n)}_*$ of the proportion of infected individuals w.r.t.\ the total population and $\hat  v^{(n)}_*$ w.r.t.\ population of the giant component
		(each with whiskers for the $\pm$2-standard-deviation intervals,  with temporal standard deviation as in \eqref{eq:tempo-av}),  and comparison with the equilibrium $u_*$.}
	\label{fig_sparse}
\end{figure}

In  the left  panel of  Fig.~\ref{fig_sparse}, we  present the  temporal
average  of  the  proportion  of  infected  individuals  for  the  total
population  (that is,  $\hat u^{(n)}_*$)  and the  population in  the giant
component (that is, $\hat  v^{(n)}_* $), as a function of  $n$, in the very
sparse regime  $\alpha=0$ (with $w^{(n)}_I  =1.2$).  Recall that  $w=3$, so
that $n\, w^{(n)}_E=2.5$ corresponds indeed  to the supercritical regime, but
the graph is not connected.  We  observe a convergence of the quantities
$\hat  u^{(n)}_*$ and  $\hat v^{(n)}_*$  as  $n$ goes  to infinity,  but to  a
smaller value  than the equilibrium $u_*$  predicted by  the model
when  the graph  is  dense  or merely sparse. 
The  restriction  to the  giant
component  entails a  smaller  discrepancy,  implying that  this
effect  is   indeed  contributing;   however  the   discrepancy  remains
significant.  This is all the more  noticeable as the restriction to the
giant component  corresponds to  a conditioning  of the  graph structure
towards individuals  having more contacts  between each other.

In the right panel of  Fig.~\ref{fig_sparse}, the size of the population
is fixed to $n=n_0$, and we study the effect of $n\, w^{(n)}_E$ getting close
to the transition phase $c=1$,  where the giant component disappears, by
letting $w^{(n)}_I$  grow near $3$. Similarly to the left panel, we plot the temporal  average of
the proportion  of infected individuals  for the total  population (that
is, $\hat u^{(n)}_*$)  and the population in the giant  component (that is,
$\hat v^{(n)}_* $), but here as a function  of $w^{(n)}_I$, for fixed $n=n_0$.
When $w^{(n)}_I$ is close to 0, the coupling approximation is valid, that  is,
Assumption~\ref{hyp:techn} holds, and almost  all the population is in the giant component:  thus we can apply our result  and as expected the
values of $\hat u^{(n)}_*$, $\hat v^{(n)}_*$ and $u_*$ coincide.
When we start increasing $w^{(n)}_I$, as already observed in the left panel, a discrepancy starts appearing between $\hat u^{(n)}_*$ and $u_*$, and between $\hat v^{(n)}_*$ and $u_*$, the latter being smaller.
When increasing $w^{(n)}_I$ further, the effect of restricting the attention to the giant component becomes stronger: the
discrepancy  between $\hat u^{(n)}_*$ and $u_*$ keeps increasing,
while the one between $\hat v^{(n)}_*$ and $u_*$ remains small, and also changes sign when $w^{(n)}_I$ is larger than 2.
The increase in the discrepancy when considering the whole population is  largely 
due to a larger proportion of individuals outside of the giant
component, among which the epidemic
is not able to sustain itself.

\section{Summary of contributions}
\label{sec_cl}

Starting from a stochastic individual-based model of a finite population
on a general state space  of interactions, we established limit theorems
showing that in large populations, the complex combination of randomness
coming both from  the graph structure and from the infection
events  can be treated as
a  small perturbation  of a  deterministic process.   This deterministic
process    is    described    by   an integro-differential equation,
see~\eqref{eq:intro-uT}  introduced  in  \cite{DDZ22a}. This integro-differential equation  is ruled  by a  bounded recovery  rate $\gamma$  and a
bounded  transmission kernel    $w$  that combines  these  two  aspects:  the
connection density 
and the  infection  rate.

Due
to  possible compensations  between  these  two aspects, we  do not  rely on  any form of  convergence of  the random
graph sequence by itself.  Instead,  we propose an efficient coupling of
the  dynamics on  a  random graph  structure (fixed  in  time) with a
dynamics on a complete graph, in which
there is no persistent graph structure and 
only the infections are random.   By  averaging over  the
proportion  of individuals  whose  state can disagree  between the  two
dynamics, we justify that their limits necessarily  coincide as the
size $n$ of the population tends  to infinity. The crucial condition for
the coupling to provide a good approximation is that the average rate of
infection over the population goes down to
$0$    as    $n$    tends    to    infinity~\eqref{eq:intro-wI=0}.

We rely on very mild assumptions  on the recovery rate function $\gamma$
and the transmission kernel $w$. This  allows to consider for $w$ a wide
variety of  potential interactions  networks (see Remark~\ref{rem_Vgraphs}  and Assumption~\ref{hyp:A1}), such  as discontinuous
geometric kernels,  continuous features  on a multidimensional  setting, and stochastic block models.  
Let us also stress that there is no assumption on the connectivity of the
random graph $G^{(n)}$ of the population of size $n$: connectivity is not a relevant property for our convergence result. 

Our convergence  result holds in the  case of dense and  sparse graphs,
where the total number of edges is,  for a large population of size $n$, of order  $n/\epsilon_n$ with $\lim_{n \rightarrow\infty} \epsilon_n=0$, thus for example 
of order  $n^a$ with  $a=2$ (dense) and $a\in  (1,2)$ (sparse); see Remark
\ref{rem_nAlpha}.  Assumption~\ref{hyp:techn} fails in the case of very
sparse graphs  $a=0$, for which the  number of edges is  of order $O(n)$
and thus the average rate of infection over the population is of order 1.  It  appears that Assumption~\ref{hyp:techn}
is crucial to ensure 
convergence 
as it is shown in simulations in Section~\ref{sec_Vsparse}.

Notice that the model could be easily generalized to allow the inclusion of non-Markovian
description of the    disease    propagation, see
\cite{arazozaclemencontran,     barbourreinert,     forienpangpardoux,
	forienpangpardouxzotsa, foutelrodierblanquartetal} in these directions.  Also, there is no motion nor addition  or deletion of individuals. For epidemics on  moving particles, we
can refer  to \cite{bowongemakouapardoux,durrettyao2022}  for instance. 
Another important direction that could be included would be to relax the consideration of a static (in time)
graph. When the epidemic spreads, the social network can be affected: either by preventive measure performed by separate individuals, who may drop edges and/or rewire connections depending on how the diseases progresses (e.g. \cite{ballbritton22}), or by public health measures such as lockdowns or contact-tracing (e.g. \cite{charpentierelielaurieretran,kryvenstegehuis}). This direction is also very important for future research.
\\

\subsubsection*{Codes}
All the codes and files necessary to reproduce the results presented in this document are available in the GitHub repository:	
\url{https://forgemia.inra.fr/aurelien.velleret/simulations_containment_strategies_and_city_size_heterogeneity}

Simulations and figures have been created in Python (v3.8.10) and many associated simulation outcomes are saved as csv files. Please refer to the Readme file for further information.

\appendix

\section{Appendix: Proof of Proposition~\ref{pr_cpl_G1}}
\label{sec:appendix}

We assume in this section that the hypotheses of
Proposition~\ref{pr_cpl_G1} hold.

\subsection{The epidemic process $\widetilde{\eta}^{(n)}$ coupled to $\eta^{(n)}$}
\hfill\\
\newcommand{\arr}{\overrightarrow{a}}
Recall the notations  of  Section~\ref{sec:proofs}. Following
the classical graphical approach for defining interacting particle systems,
see \cite[Section III.6]{Lig05} for reference, let us define a
graphical model for the coupling as follows. Let $(V_{\ell}(i,
j))_{1\le i<j, \ell \in \mathbb{N}^*}$  be a  family of  independent uniform
random  variables  on   $[0,  1]$,  and  independent   of  $\mathcal{X}^{(n)}$  and
$\mathcal{E}^{(n)}$. We let $V_1(i,j)$ be equal to $V(i,j)$ from
Section~\ref{section:IBM}. Similarly to Section~\ref{section:IBM},
we set $V_\ell (j,i)=V_\ell (i,j)$  and $V_\ell (i,i)=0$
for  convenience.

For each atom $(s,i,j,u)$ of the Poisson measure $Q_I$, we draw an
\emph{arrow} from $j$ to $i$ at time $s$  if $u\leq  w^{(n)}_I(i,j)$,
and denote the corresponding event by  $\{u\leq  w^{(n)}_I(i,j)\}$. 
Let us denote by $N_t^{(n)}(i,j)$ the number of arrows from $j$ to $i$
or vice-versa
up to time~$t$: 
\begin{equation}
	\label{Ntij}
	\mathrm{N}_t^{(n)}(i_0, j_0)= \int \mathbbm{1}_{\{s\leq t\}}\, \mathbbm{1}_{ \{
		\{i,j\}=\{i_0,j_0\}\}} \mathbbm{1}_{\{u\leq w^{(n)}_I(i,j)\}}\,   dQ_I.
\end{equation}
For any atom of $Q_I$ that leads to an arrow, we define the connection events:
\begin{align}
	\label{eq:def-AC}
	C^{(n)}(i,j,u,s)
	&=  \{u\leq  w^{(n)}_I(i,j)\} \cap \{ V_1(i,j) \leq
	w^{(n)}_E(i,j)\} =  \{u\leq  w^{(n)}_I(i,j)\} \cap \{i\sim j\},  \\
	\label{eq:def-tC}
	\widetilde{C}^{(n)}(i,j,u,s)
	&= \{u\leq  w^{(n)}_I(i,j)\} \cap  \{ V_{N^{(n)}_s(i,j)}(i,j)  \leq w^{(n)}_E(i,j) \}.
\end{align}

On $C$ (resp. $\widetilde{C}$), we say that the arrow is activated for the
original process (resp. for the coupled  process). 
As usual for graphical models, $\eta^{(n)}$ (resp. $\widetilde{\eta}^{(n)}=( \widetilde{\eta}^{(n)}_t)_{t\in
	\mathbb{R}_+}$) is then 
constructed by following the activated arrows defined by 
$C^{(n)}$ (resp. $\widetilde{C}^{(n)}$). 
Compared to \eqref{eq:def-B},  the dynamics of $\widetilde{\eta}^{(n)}$ is the same as
the one of $\eta^{(n)}$  except that the
event $\{i\sim j\}\cap \{ u\le w^{(n)}_I (x_i, x_j) \} $ is replaced by the
connection event  $\widetilde{C}^{(n)}(i,j,u,s)$.  If the arrow is the first
to occur between $i$ and $j$, then $N^{(n)}_s(i,j)=1$ and the events are the same,
if not, the use of an independent $V_\ell(i,j)$ in the definition of
$\widetilde{C}^{(n)}$ corresponds to a resampling of the connection between $i$ and $j$.

\medskip

Thus, the    epidemic   process
$\widetilde{\eta}^{(n)}=( \widetilde{\eta}^{(n)}_t)_{t\in
	\mathbb{R}_+}\in   \mathcal{D}$   on  the   $n$   individuals
$\mathcal{X}^{(n)}=(x_i)_{i\in [\![1, n]\!]}$ is defined by:
\begin{equation}
	\label{eq:te-def}
	\widetilde{\eta}_t^{(n)}(dx, de) = \frac{1}{n} \sum_{i =1}^{n}
	\delta_{(x_i,\widetilde{E}^i_t)}(dx, de) ,
\end{equation}
with $\widetilde{\eta}^{(n)}_0=\eta^{(n)}_0$ and, similarly to~\eqref{SDE:etaN}, for $t>0$:
\begin{multline}
	\label{SDE:etaNtilde}
	\widetilde{\eta}^{(n)}_t - \widetilde{\eta}^{(n)}_0
	= n^{-1} \int \mathbbm{1}_{\{s<t\}} \, (\delta_{(x_i, S)} - \delta_{(x_i, I)})
	\,\mathbbm{1}_{\widetilde{A}^{(n)}(i,u,s)} \,  dQ_R\\
	+ n^{-1} \int \mathbbm{1}_{\{s<t\}}\,   (\delta_{(x_i, I)} - \delta_{(x_i,
		S)}) \, \mathbbm{1}_{\widetilde{B}^{(n)}(i,j,u,s)} \,  dQ_I,
\end{multline}
where $\widetilde{A}^{(n)}(i,u,s)$ and $\widetilde{B}^{(n)}(i,j,u,s)$ are defined
similarly to~\eqref{eq:def-A} 
and~\eqref{eq:def-B}:

\[
\widetilde{A}^{(n)}(i,u,s)=
\{i\leq  n\}\cap \{u\le  \gamma^{(n)}(x_i)\}\cap\{\widetilde{E}^i_{s-}  = I\},
\]
and
\[
\widetilde{B}^{(n)}(i,j,u,s)=  \{i,j\leq  n\}
\cap \widetilde{C}^{(n)}(i,j,u,s)
\cap
\{\widetilde{E}^i_{s-} = S, \widetilde{E}^j_{s-} = I\}.
\]

From this description, we get that $\widetilde{\eta}^{(n)}$ is an epidemic
process on a complete graph (so the corresponding
connection density 
is $\widetilde{w}^{(n)}_E\equiv 1$)  with the infection rate 
$\widetilde{w}^{(n)}_I=w^{(n)}_I  w^{(n)}_E = n^{-1}\, w^{(n)}$ and recovery rate
$\widetilde{\gamma}^{(n)}=\gamma^{(n)}$.

\subsection{The fog process}
\hfill\\
To  study the  coupling  between  $\eta^{(n)}$ and  $\widetilde{\eta}^{(n)}$,
we introduce a process that we call the \emph{fog} process, that we first
describe informally. 
At time  $t$, each vertex $i$
is either  in or out of the fog: $\xi^{(n)}_t(i) = 1$ or~$0$. At the beginning
there is no fog ($\xi^{(n)}_0(i) = 0$). Once a vertex enters the fog it stays
foggy forever. The rules for creating the fog or propagating it will ensure
the following key property:
\[ \text{If $\xi^{(n)}_t(i) = 0$, then the  processes
	$\eta^{(n)}$ and $\widetilde{\eta}^{(n)}$ coincide at vertex $i$
	on the time interval $[0,t]$}.\]
In other words, the fog is an upper bound on the vertices where the processes
$\eta^{(n)}$ and $\widetilde{\eta}^{(n)}$ may have decoupled. 
The fog process is formally  a family of 
pure-jump  processes $\xi^{(n)}(i)=(\xi^{(n)}_t(i))_{t\in \mathbb{R}_+}$, that start
at $0$ and jump at most once, to the value $1$.
Consider the following conditions:
\begin{enumerate}[label=\alph*)]
	\item The vertex $i$ is not currently in the fog: $\xi_{t-}^{(n)}(i) = 0$.
	\item There is an arrow at time $t$ from some $j$ to $i$.
	\item\label{it:child}
	The vertex $j$ is in the fog, and the arrow is activated for the
	coupled  process. 
	\item[c')]\label{it:root}
	The  arrow is activated for exactly one of the two processes.
\end{enumerate}
If a), b) and c) are satisfied, we say that the fog  \emph{propagates} from $j$ to $i$, and that $i$ is the \emph{child} of $j$; otherwise, 
if a), b) and c') hold, we say
that $i$ is a \emph{root} for the fog process. In both cases $i$ enters the fog
at time $t$~: $\xi_t ^{(n)}(i) = 1$.
Formally the propagation of the fog, that is conditions a), b) and c), corresponds to the event:
\begin{equation}
	\label{eq:Hprop}
	H^{(n)}_\mathrm{prop}(i, j, u,  s) = \{i, j \le n\}\cap  \{\xi^{(n)}_{s-}(i) = 0\} 
	\cap \widetilde{C}^{(n)}(i,j,u,s) \cap \{ \xi^{(n)}_{s-}(j) = 1 \},
\end{equation}
and the conditions  a), b) and c') to the event:
\begin{equation*}
	H_\mathrm{xor}^{(n)}(i, j, u,  s) =\{i, j \le n\}\cap    \{\xi^{(n)}_{s-}(i) = 0\} 
	\cap  \Big( 
	C^{(n)}(i,j,u,s) \triangle\widetilde{C}^{(n)}(i,j,u,s)\Big), 
\end{equation*}
where
$A \triangle  B= (A \cap B^  c) \cup (A^c \cap  B)$. In particular
becoming a root corresponds to the event:
\begin{equation}
	\label{eq:def-Hor}
	H^{(n)}_\mathrm{root}(i, j, u,  s) =  H^{(n)}_\mathrm{prop}(i, j, u,  s)^c
	\cap H_\mathrm{xor}^{(n)}(i, j, u,  s). 
\end{equation}
In particular, notice that if at time  $s$, $j$ is in the fog, the arrow
from $j$ to $i$ is activated and $i$ was not previously in the fog (that
is we are on the  event $ C^{(n)}(i,j,u,s) \cup \widetilde{C}^{(n)}(i,j,u,s)$), then
$i$ is now in the fog (this corresponds to the events a), b) and c) or c')).
\medskip

We can formalize the evolution of $\xi^{(n)}$ as follow, for $i\in [\![1, n]\!]$:
\begin{equation}
	\label{xi2n}
	\xi^{(n)}_t(i')
	= \int \mathbbm{1}_{\{s\leq t\}}\, 
	\, \mathbbm{1}_{H^{(n)}(i, j, u,  s)}  \, \mathbbm{1}_{\{i=i'\}}\,
	dQ^{(n)}_I,
\end{equation}
where $H^{(n)}(i, j, u,  s)$ specifies that 
$i$ enters the fog at time $s$: 
\[
H^{(n)}(i, j, u,  s) =   H^{(n)}_\mathrm{prop}(i, j, u,  s)  \cup   H^{(n)}_\mathrm{root}(i, j, u,  s) .
\]

Let us denote the number of vertices in the fog at time $t$ by:
\begin{equation}
	\label{eq:def-Xi}
	\Xi^{(n)}_t=\sum_{i\in [\![1, n]\!]}\xi^{(n)}_t(i).
\end{equation}

The fog process is designed to ensure the following upper bound. 
\begin{lem}[Control of the coupling by the fog process]
	\label{lem:xieND}
	The following upper-bound holds a.s. for all $T\geq 0$:
	\[
	\textstyle{\sup_{\{t\le T\} }  }\, \|\eta^{(n)}_t - \widetilde{\eta}^{(n)}_t\|_{TV} \leq  \frac{1}{n}  \, \Xi^{(n)}_T.
	\]
\end{lem}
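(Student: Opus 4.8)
The plan is to bound, for each fixed $t$, the number of vertices where the two trajectories have decoupled, and to dominate it by the number of vertices in the fog. Writing
\[
  \eta\eN_t - \wtd{\eta}\eN_t
  = \frac1n \sum_{i=1}^n \bigl(\delta_{(x_i, E^i_t)} - \delta_{(x_i,\wtd{E}^i_t)}\bigr),
\]
the summands with $E^i_t = \wtd{E}^i_t$ vanish and the others have total variation norm $1/n$ each, so the triangle inequality (which needs no identifiability) gives $\NTV{\eta\eN_t-\wtd{\eta}\eN_t}\le D\eN_t/n$, where $D\eN_t:=\#\{i\in\II{1,n}\colon E^i_t\neq\wtd{E}^i_t\}$. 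Since each process $\xi\eN(i)=(\xi\eN_s(i))_{s\ge0}$ is $\{0,1\}$-valued and jumps at most once (the condition $\{\xi\eN_{s-}(i)=0\}$ enters the definition of $H\eN$), $s\mapsto\Xi\eN_s$ is non-decreasing. Hence it suffices to prove the pointwise bound $D\eN_t\le\Xi\eN_t$, which, combined with the above, yields $\Tsup{t\le T}\NTV{\eta\eN_t-\wtd{\eta}\eN_t}\le n^{-1}\Tsup{t\le T}D\eN_t\le n^{-1}\Xi\eN_T$.

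\paragraph{The key invariant.}
The bound $D\eN_t\le\Xi\eN_t$ follows from the property the fog was built to enforce: a.s., for every $i$ and $t$,
\[
  \xi\eN_t(i)=0\ \Longrightarrow\ E^i_s=\wtd{E}^i_s\ \text{ for all }s\in[0,t].
\]
I would establish this by induction along the successive relevant events of $Q_R$ and $Q_I$ in $[0,T]$: there are a.s.\ finitely many of them (recovery events at a vertex have rate at most $\Ninf{\gamma\eN}$, and arrows between $i$ and $j$ have rate $w\eN_I(i,j)<\infty$), and $\eta\eN,\wtd{\eta}\eN,\xi\eN$ are constant between them; the invariant holds at $t=0$ since $\xi\eN_0\equiv0$ and $\eta\eN_0=\wtd{\eta}\eN_0$. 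At a recovery event $(s,i,u)$ the fog does not move and only $i$ may change state; if $\xi\eN_{s-}(i)=0$ then $E^i_{s-}=\wtd{E}^i_{s-}$ by the inductive hypothesis and, as $A\eN(i,u,s)$ and $\wtd{A}\eN(i,u,s)$ share the threshold $\{u\le\gamma\eN(x_i)\}$, the recovery is accepted for both processes or for neither, so $E^i_s=\wtd{E}^i_s$.

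\paragraph{The arrow event.}
The delicate step is an arrow event $(s,i,j,u)$ (so $u\le w\eN_I(i,j)$), the arrow pointing from $j$ to $i$; only vertex $i$ is concerned. If $\xi\eN_{s-}(i)=1$ there is nothing to check, so assume $\xi\eN_{s-}(i)=0$, hence $E^i_{s-}=\wtd{E}^i_{s-}$. On this atom $C\eN(i,j,u,s)=\{V_1(i,j)\le w\eN_E(i,j)\}$ while $\wtd{C}\eN(i,j,u,s)=\{V_m(i,j)\le w\eN_E(i,j)\}$ with $m:=\tN_s\eN(i,j)$. If $\xi\eN_{s-}(j)=0$: when $m=1$, or when $m\ge2$ and $C\eN\triangle\wtd{C}\eN$ does not occur, one has $C\eN=\wtd{C}\eN$ on the atom and also $E^j_{s-}=\wtd{E}^j_{s-}$, so $B\eN(i,j,u,s)$ and $\wtd{B}\eN(i,j,u,s)$ coincide, $i$ flips (or not) simultaneously in both processes giving $E^i_s=\wtd{E}^i_s$, and neither $H\eN_{\mathrm{prop}}$ (which requires $\xi\eN_{s-}(j)=1$) nor $H\eN_{\mathrm{root}}$ (which requires $C\eN\triangle\wtd{C}\eN$) occurs, so $\xi\eN_s(i)=0$; when $m\ge2$ and $C\eN\triangle\wtd{C}\eN$ occurs, $H\eN_{\mathrm{xor}}$ holds, $H\eN_{\mathrm{prop}}$ fails, so $H\eN_{\mathrm{root}}$ holds and $\xi\eN_s(i)=1$, making the invariant vacuous at $i$. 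If $\xi\eN_{s-}(j)=1$: when $\wtd{C}\eN$ holds, $H\eN_{\mathrm{prop}}$ holds and $\xi\eN_s(i)=1$ (the fog propagates from $j$ to $i$), vacuous; when $\wtd{C}\eN$ fails, $H\eN_{\mathrm{xor}}$ reduces to $C\eN$, so either $C\eN$ holds and $\xi\eN_s(i)=1$ (vacuous), or $C\eN$ also fails, whence neither $B\eN$ nor $\wtd{B}\eN$ occurs, $i$ flips in neither process, $E^i_s=\wtd{E}^i_s$ and $\xi\eN_s(i)=0$. In every case the invariant persists, which completes the induction and hence the proof.

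\paragraph{Main obstacle.}
The genuinely fiddly point is the arrow-event analysis above, in particular the ``resampling'' phenomenon: for a second or later arrow between $i$ and $j$ the connection is drawn afresh through an independent $V_m(i,j)$, so $C\eN$ and $\wtd{C}\eN$ may differ even when neither endpoint is foggy, and one must verify that the ``root'' rule (condition c') triggers exactly in those configurations where the target vertex can decouple and that are not already covered by the propagation rule (condition c). The converse remark---that conservatively fogging $i$ whenever the source $j$ is foggy cannot break the one-sided inequality---is harmless but has to be tracked consistently through the bookkeeping, together with the (routine) a.s.\ local finiteness of the event times used to run the induction.
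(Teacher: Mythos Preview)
Your proof is correct and follows the same approach as the paper: both reduce the total-variation bound to the pointwise inequality $\ind_{\{E^i_t\neq\wtd{E}^i_t\}}\le\xi\eN_t(i)$ and then use the monotonicity of $\Xi\eN$. The paper's proof merely asserts this inequality ``after careful consideration'', whereas you carry out that consideration explicitly via the event-by-event case analysis, which is exactly the verification the paper leaves to the reader.
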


\begin{proof}
	Since by definition $\eta^{(n)}_0=\widetilde{\eta}^{(n)}_0$ and $\xi^{(n)}_0=0$ and that the
	same individuals in  the two epidemic processes  when infected recover
	at the same time (the recovery procedure is driven by the same Poisson
	point measure $Q_R$), we get by construction that the number of points
	in the support of $\xi^{(n)}_t$  is then, after careful consideration, an
	upper  bound  of the  number  of  vertices  with different  states  in
	$\eta^{(n)}_t$  and $\widetilde{\eta}^{(n)}_t$.   More  precisely,  the following upper-bound holds a.s.\ for  all
	$t\geq 0$ and all $i\in [\![1, n]\!]$:
	\[
	\mathbbm{1}_{\{E^i_t \neq \widetilde{E}^i_t\}} \leq  \xi^{(n)}_t(i)\in \{0, 1\}.
	\]
	Using that  the process  $(\xi^{(n)}_t(i))_{t\in \mathbb{R}_+}$ is
	non-decreasing, and the expressions~\eqref{muteN} for $\eta^{(n)}_t$
	and~\eqref{eq:te-def} for $\widetilde{\eta}^{(n)}_t$, we deduce that:
	\[
	\sup_{t\in [0, T]} \|\eta^{(n)}_t - \widetilde{\eta}^{(n)}_t\|_{TV} \leq  \frac{1}{n} \, \textstyle{\sup_{\{t\le T\} }  }\,
	\Xi^{(n)}_t= \frac{1}{n}  \, 
	\Xi^{(n)}_T.
	\]
\end{proof}

\subsection{Upper bound on the expected size of the fog}
\hfill\\
We follow ideas from 
\cite[Section~1.2]{ballbrittonlaredopardouxsirltran}
where  ``ghost
infections'' are introduced to  associate the SIR infection process
with  a branching  infection process  (the first  being included  in the
second   for    the   chosen   coupling). 
Using the construction of $(\xi^{(n)}_t)_{t\in \mathbb{R}_+}$, we can represent its
support in terms of random forests.
More precisely, recall that each time a vertex enters the fog, it is either
as a \emph{root} or as a \emph{child} of another vertex that is already in the fog
at that time.

We denote by $S_{i}$ the jumping time of $\xi^{(n)}(i)$ (with the
convention that $S_i=+\infty $ if $\xi_\infty ^{(n)}(i)=0$), by
$\mathcal{R}^{(n)}$ the set of roots and  by $\mathcal{R}^{(n)}_t\subset [\![1, n]\!]$ the
set of roots  born up to time $t$:
\begin{equation}
	\label{eq:def-Rnt}
	\mathcal{R}^{(n)}_t=\{i\in \mathcal{R}^{(n)}\, \colon\, S_i \leq t\}.
\end{equation}

For a given root~$i$, let $\xi^{i, (n)}$ denote the process of its
descendants, for $t\geq 0$ and $i'\in [\![1, n]\!]$: 
\[
\xi^{i, (n)}_t(i') = \mathbbm{1}_{\{\text{$i$ is a root}\}} \,
\mathbbm{1}_{\{\text{$i'$ is a descendant of $i$}\}}\, \xi^{(n)}_t(i') .
\]
Let  $\Xi^{i, (n)}_t =  \sum_{i'\in [\![1, n]\!]  } \xi^{i, (n)}_t(i')$  be the  total
population up  to time $t$ fathered  by the root $i$, and notice that
$\Xi^{i, (n)}_t=0$ on $\{t<S_i\}$.  The total size of the fog is therefore:
\begin{equation}
	\label{eq:sum-Xi-j}
	\Xi^{(n)}_t=\sum_{i=1}^n \Xi^{i, (n)}_t\, \mathbbm{1}_{\{i\in \mathcal{R}^{(n)}_t\}} .
\end{equation}

We now give an upper bound on $\mathbb{E}\Big[\Xi^{i, (n)}_t\, \mathbbm{1}_{\{i\in \mathcal{R}^{(n)}_t\}} \Big]$. 
Recall $C_w=\sup_{n\in \mathbb{N}^*} \|w^{(n)}\|_\infty$.

\begin{lem}
	\label{lem_crL}
	The following upper-bound holds for all $t\geq 0$ and $i\in [\![1, n]\!]$:
	\[
	\mathbb{E}\Big[\Xi^{i, (n)}_t \,\mathbbm{1}_{\{i  \in\mathcal{R}^{(n)}_t\}}\Big] \le \mathrm{e}^{C_w\,  t}
	\, \mathbb{P}\Big(i\in \mathcal{R}^{(n)}_t\Big).
	\]
\end{lem}	
\begin{proof}
	To avoid any confusion with the notation $dQ_I=Q_I(ds, di, dj,
	du)$, we shall prove the lemma with $i$ replaced by  $i_0$.
	By  construction
	(recalling \eqref{xi2n}, \eqref{def_RNT}, \eqref{eq:sum-Xi-j} and \eqref{eq_Dni}),
	the evolution of $\xi^{i_0, (n)}$ is given by the following formula, for $i'\in [\![1, n]\!]$:
	\begin{equation}
		\label{eq_Dni}
		\xi^{i_0, (n)}_t(i')
		= 	\mathbbm{1}_{\{i_0\in\mathcal{R}^{(n)}_t\}}
		\mathbbm{1}_{\{i'=i_0\}}
		+ \mathbbm{1}_{\{t\geq S_{i_0}\}}  \int \mathbbm{1}_{\{s\leq t\}}\,
		\mathbbm{1}_{D^{i_0, (n)}( i, j, u,s)} 
		\mathbbm{1}_{\{i'=i\}}\,  dQ^{(n)}_I,
	\end{equation}
	where $D^{i_0, (n)}(i, j, u,s)$ specifies that,
	at time $s$,
	$i$ enters the fog as a child  of a vertex $j$ which, itself, descends
	from  $i_0$ (compare with $  H^{(n)}_\mathrm{prop}$ from~\eqref{eq:Hprop}):
	\[
	D^{i_0, (n)}(i, j, u,s) 
	= \{i, j\le n\}
	\cap \{\xi^{i_0, (n)}_{s-}(i) = 0\}
	\cap \{\xi^{i_0, (n)}_{s-}(j) = 1\}
	\cap \widetilde{C}^{(n)}(i,j,u,s).
	\]

	On the event $\{i_0\in \mathcal{R}^{(n)}\}$, 
	since $n\cdot w^{(n)}_E(x, y)\cdot w^{(n)}_I(x, y)\le C_w$ holds for any $x, y\in \mathbb{X}$,
	the process
	$(\xi^{i_0, (n)}_{S_{i_0}+t})_{t\in \mathbb{R}_+}$ 
	jumps with an additional Dirac Mass at location $i'$
	with a rate upper-bounded at time $t$ by 
	\begin{equation*}
		\sum_{j\in [\![1, n]\!]} \mathbbm{1}_{\lbrace \xi^{i_0, (n)}_{S_{i_0}+t-}(j)\ge 1\rbrace} 
		w^{(n)}_E(x_{i'}, x_j)\cdot w^{(n)}_I(x_{i'}, x_j)\le 
		\frac{C_w}{n}\sum_{j\in [\![1, n]\!]} \mathbbm{1}_{\lbrace \xi^{i_0, (n)}_{S_{i_0}+t-}(j)\ge 1\rbrace}.
	\end{equation*}
	Thus, this process is stochastically dominated by the
	pure-jump process $\zeta^{(n)}=(\zeta^{(n)}_{t})_{t\in \mathbb{R}_+}$ on $[\![1, n]\!]$
	defined by:
	\[
	\zeta^{(n)}_t(i')
	= \mathbbm{1}_{\{i'=i_0\}}
	+  \int \mathbbm{1}_{\{s\leq t\}}\,  \mathbbm{1}_{G^{(n)}(i, j, k, z, s)}
	\mathbbm{1}_{\{i'=i\}}\, 
	Q(ds, di, dj, dk, dz),
	\]
	where $Q$ is a Poisson point measure on $\mathbb{R}_+\times \mathbb{N}^*\times \mathbb{N}^*\times\mathbb{N}^*\times
	\mathbb{R}_+$ with intensity $ds \, \mathrm{n}(di)\, \mathrm{n}(dj)\,\mathrm{n}(dk)\, dz$
	and $G^{(n)}(i, j, k, z, s)$ specifies that, at time $s$,
	an individual is added at vertex  $i$
	as a descendant of an already added individual at vertex  $j$
	(the parameter $k$ being introduced to cope with the allowed multiplicity of individuals at position $j$):
	\[
	G^{(n)}(i, j, k, z, s)
	= \{i, j \le n\}
	\cap \{k \le \zeta^{(n)}_{s-}(j)\}
	\cap \{z\leq C_w/n\}.
	\]

	Let $Z^{(n)}=(Z^{(n)}_t)_{t\in \mathbb{R}_+}$ be the size of the population at time
	$t$ defined by $Z^{(n)}_t=\sum_{i\in [\![1, n]\!]} \zeta^{(n)}_t(i)$. 
	$Z^{(n)}$ is expressed as follows for any $t\ge 0$:
	\[
	Z^{(n)}_t
	= 1+ \int \mathbbm{1}_{\{s\leq t\}}\, \mathbbm{1}_{G^{(n)}(i, j, k, z, s)}\,  dQ
	= 1+ \frac{C_w}{n} \int \mathbbm{1}_{\{s\leq t\}}\, n Z^{(n)}_{s} \, ds + W^{(n)}_t,
	\]
	where $W^{(n)}=(W^{(n)}_t)_{t\in \mathbb{R}_+}$ is a square integrable martingale
	with  quadratic variation:
	\[
	\langle W^{(n)}\rangle_t = \frac{C_w}{n}\int_0^t  n  Z^{(n)}_{s}\,  ds
	=   C_w \int_0^t   Z^{(n)}_{s}\,ds.
	\]
	We recognize the semi-martingale decomposition of a birth process with
	birth rate  $C_w$, started  at $1$. Thus, we have $\mathbb{E}[
	Z^{(n)}_t]=\exp (C_w\, t)$.
	\medskip
	
	We deduce that on the  event $\{i_0\in \mathcal{R}^{(n)}\}$, the total mass
	$\Xi^{i_0, (n)}_{S_{i_0}+t}$ is 
	stochastically dominated  by $Z^{(n)}_t$.  Since the  process $\Xi^{i_0, (n)}$ is
	non-decreasing and 0 on $\{S_{i_0}>t\}$, we deduce that:
	\[
	\mathbb{E}\Big[ \Xi^{i_0, (n)}_{t}\, \mathbbm{1}_{\{i_0\in \mathcal{R}^{(n)}_t\}} \Big]     \leq 
	\mathbb{E}\Big[ \Xi^{i_0, (n)}_{S_{i_0}+t}\, \mathbbm{1}_{\{i_0\in \mathcal{R}^{(n)}_t\}} \Big]     \leq 
	\mathbb{P}\Big(  i_0\in \mathcal{R}^{(n)}_t \Big) \mathbb{E} [Z^{(n)}_t].
	\] 
	This concludes the proof. 
\end{proof}

The following  lemma gives a bound on the probability for a
given vertex to be a root before a given time $t$. 
\begin{lem}
	\label{lem_ERL}
	The following upper-bound holds for all $t\geq 0$ and all $i\in [\![1, n]\!]$:
	\[
	\mathbb{P}\Big(i\in \mathcal{R}^{(n)}_t\, \big|\, \mathcal{X}^{(n)} \Big)  \leq
	\frac{2t(t\vee 1) \, C_w}{n} \, \sum_{j\in [\![1, n]\!],\,  j\neq i}
	\Big\{\big(  w^{(n)}_I(x_i, x_j)  \wedge 1\big) + \big(  w^{(n)}_I(x_j, x_i)
	\wedge 1\big)\Big\}. 
	\]
\end{lem}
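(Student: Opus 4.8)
The plan is to estimate $\P(i \in \cR\eN_t \mid \cx\eN)$ straight from the graphical construction of $\xi\eN$. Recall from~\eqref{xi2n} and~\eqref{eq:def-Hor} that $\{i \in \cR\eN_t\}$ is contained in the event that $H\eN_\mathrm{root}(i,j,u,s)$ occurs for some $j$ and some atom $(s,i,j,u)$ of $Q_I$ with $s \le t$; since $H\eN_\mathrm{root}(i,j,u,s) = (H\eN_\mathrm{prop}(i,j,u,s))^c \cap H_\mathrm{xor}\eN(i,j,u,s) \subseteq C\eN(i,j,u,s) \triangle \wtd{C}\eN(i,j,u,s)$, a union bound over $j \neq i$ reduces the claim to bounding, for each fixed $j$, the probability that the symmetric difference $C\eN(i,j,u,s) \triangle \wtd{C}\eN(i,j,u,s)$ occurs at some atom $(s,i,j,u)$ of $Q_I$ with $s \le t$. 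By~\eqref{eq:def-AC}--\eqref{eq:def-tC}, $C\eN$ and $\wtd{C}\eN$ share the factor $\{u \le w\eN_I(x_i,x_j)\}$ and differ only through $\{V_1(i,j) \le w\eN_E(x_i,x_j)\}$ versus $\{V_{\tN_s\eN(i,j)}(i,j) \le w\eN_E(x_i,x_j)\}$, so the symmetric difference forces both $u \le w\eN_I(x_i,x_j)$ (the atom is a $j \to i$ arrow) and $\tN_s\eN(i,j) \ge 2$ (otherwise $V_{\tN_s\eN(i,j)}(i,j) = V_1(i,j)$ and the two events coincide). The condition $\tN_s\eN(i,j) \ge 2$ means there is a strictly earlier arrow on the undirected edge $\{i,j\}$; this ``decoupling through an edge needs at least two arrows on it'' is the source of the extra smallness.

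I would then bound the probability above by the expected number of such atoms (Markov's inequality) and evaluate this expectation with the Mecke/Palm formula for the Poisson measure $Q_I$. Write $a = w\eN_I(x_i,x_j)$, $b = w\eN_I(x_j,x_i)$ and $p = w\eN_E(x_i,x_j) \in [0,1]$. The $j \to i$ arrows have intensity $a\,ds$; at such an arrow at time $s$, the event $\{\tN_s\eN(i,j) \ge 2\}$ is the event that at least one arrow occurred on $\{i,j\}$ earlier, which has probability $1 - \expp{-(a+b)s} \le \min(1,(a+b)t)$ and is measurable with respect to $Q_I$ alone; and, conditionally on $\tN_s\eN(i,j) = k \ge 2$, the marks $V_1(i,j)$ and $V_k(i,j)$ are independent uniform variables on $[0,1]$ (distinct indices, independent of $Q_I$), so $\P(\{V_1(i,j) \le p\} \triangle \{V_k(i,j) \le p\}) = 2p(1-p) \le 2p$. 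The Mecke formula thus gives
\[
\P\big(\text{$i$ becomes a root through the edge }\{i,j\}\ \big|\ \cx\eN\big)
\ \le\ \int_0^t a \cdot \min\!\big(1,(a+b)t\big) \cdot 2p\ ds
\ =\ 2ap\,t\,\min\!\big(1,(a+b)t\big).
\]

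To conclude I would invoke two elementary inequalities. First, $2ap = 2\,w\eN_I(x_i,x_j)\,w\eN_E(x_i,x_j) = 2n^{-1}\,w\eN(x_i,x_j) \le 2C_w/n$, by~\eqref{def:weN} and $\Ninf{w\eN} \le C_w$. Second, $t\,\min(1,(a+b)t) \le t(t\vee 1)\min(1,a+b) \le t(t\vee 1)\big[(a\wedge 1) + (b\wedge 1)\big]$, which one checks by distinguishing $a+b \le 1$ from $a+b > 1$ and $t \le 1$ from $t > 1$. Combining, $\P(\text{$i$ becomes a root through }\{i,j\} \mid \cx\eN) \le \frac{2C_w\,t(t\vee 1)}{n}\big[(w\eN_I(x_i,x_j)\wedge 1) + (w\eN_I(x_j,x_i)\wedge 1)\big]$, and summing over $j \in \II{1,n}$ with $j \neq i$ yields the asserted bound.

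The main obstacle is the Poisson bookkeeping in the second step: one must check that the random index $\tN_s\eN(i,j)$ selecting the resampled mark is $Q_I$-measurable --- hence genuinely at least $2$ and distinct from $1$ on the relevant event, so that $(V_1(i,j), V_{\tN_s\eN(i,j)}(i,j))$ is an i.i.d.\ uniform pair --- and that the ``earlier arrow'' factor (which is precisely what produces the $\min(1,(a+b)t)$, and with it the $\wedge 1$ truncation and the correct power of $n$ after summing over $j$) is incorporated without double counting when several $j \to i$ arrows occur after the first arrow on the edge. The rest is routine estimation.
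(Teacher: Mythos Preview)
Your proof is correct and follows essentially the same strategy as the paper: union-bound over $j$, observe that on $C\eN\triangle\wtd{C}\eN$ the arrow count on the edge $\{i,j\}$ must be at least~$2$, and exploit that $w\eN_I w\eN_E = w\eN/n \le C_w/n$. The only difference is in how the Poisson estimate is carried out: the paper replaces the symmetric difference by the coarser event $\{\tN_t\eN(i,j)\ge 2\}\cap\{\exists\,\ell\le \tN_t\eN(i,j):V_\ell(i,j)\le w\eN_E(x_i,x_j)\}$ and computes its probability exactly as $g(\theta,r)=1-\expp{-\theta r}-\theta r\,\expp{-\theta}\le \theta r(\theta\wedge 1)$ with $\theta=t(a+b)$, $r=p$, whereas you bound by the expected number of root-triggering atoms via Mecke's formula and keep the sharper factor $2p(1-p)$ for the symmetric difference. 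Both routes give the same final bound $\frac{2C_w}{n}\,t\big((t(a+b))\wedge 1\big)$ per edge, and the passage to $t(t\vee1)\big[(a\wedge1)+(b\wedge1)\big]$ is identical.
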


\begin{proof}
	Let $t>0$, $n\geq 2$ and $i\in [\![1, n]\!]$ be fixed. By construction, the
	number of activations  $(\mathrm{N}_t^{(n)}(i, j))_{j\in \mathbb{N}^*, i\neq j}$ are independent
	Poisson random variable with respective parameter $t\, W^{(n)}_I(x_i,
	x_j)$ where:
	\begin{equation}\label{def_WIN}
		W^{(n)}_I(x, y)=w^{(n)}_I(x, y) + w^{(n)}_I(y, x).
	\end{equation}
	
	If $i\in \mathcal{R}^{(n)}$ is a root born at (finite)  time $S_i$, we denote by
	$(S_i, i, J_i, U_i)$ the  unique atom  of  the  random measure  $Q_I$  at  time $S_i$,
	and  let $C_i = C(i,J_i,U_i,S_i)$ (resp. $\widetilde{C}_i = \widetilde{C}(i,J_i,U_i,S_i)$)
	be the event that the arrow that leads to the creation of the root $i$
	is activated for the original process (resp. for the coupled  process).
	By the definition of roots, see~\eqref{eq:def-Hor}, 
	the following inclusion holds  for all $i\in [\![1, n]\!]$ and all $t\in \mathbb{R}_+$:
	\begin{equation}
		\label{def_RNT}
		\{i\in \mathcal{R}^{(n)}_t\}
		\subset \{ S_i\leq t\}
		\cap (C_i\triangle \widetilde{C}_i).
	\end{equation}
	By  construction,  see~\eqref{eq:def-AC}  and~\eqref{eq:def-tC},  on  the
	symmetric  difference $C_i\triangle  \widetilde{C}_i$,  the  number of  arrows
	between $\{i, j\}$ up to time $S_i$, that is $L_{i}:=\mathrm{N}^{(n)}_{S_i}(i,J_i)$ must
	be larger than $2$. Thus, the following sequence of inclusions holds:
	\begin{align*}
		\{i\in \mathcal{R}^{(n)}_t\}
		&\subset \{ S_i\leq t\}
		\cap \{ L_{i} \geq 2\} \cap (C_i\cup \widetilde{C}_i) \\
		&\subset \{ S_i \leq t\}
		\cap \{ L_{i} \geq 2\} 
		\cap\Big(\{V_{L_{i}}(i,J_i) \leq
		w^{(n)}_E(x_i, x_{J_i})\} \cup \{V_{1}(i,J_i)\leq
		w^{(n)}_E(x_i, x_{J_i})\}
		\Big)\\
		&\subset\{ N_t^{(n)}(i,J_i)\geq 2\}
		\cap\{\exists \ell\in [\![1, N_t^{(n)}(i,J_i)]\!] \quad \text{such that}\quad  V_{\ell}(i,J_i) \leq
		w^{(n)}_E(x_i, x_{J_i})\} .
	\end{align*}
	
	By construction, for $j\neq i$, the random variables $\mathrm{N}^{(n)}_{t}(i, j)
	$ and  $V_\ell(i,j)$ for $\ell\in \mathbb{N}^*$  are independent and
	the latter are uniformly distributed on $[0, 1]$. We deduce that:
	\begin{multline*}
		\mathbb{P}\Big(i\in \mathcal{R}^{(n)}_ t
		\, \big|\, \mathcal{X}^{(n)}  \Big)\\
		\begin{aligned}
			&\leq  \sum_{j\in [\![1, n]\!],\,  j\neq i} \!\!
			\mathbb{P}\Big(\mathrm{N}^{(n)}_{t}(i, j)\geq 2,\,  \exists \ell\in [\![1, N_t^{(n)}(i,j)]\!]
			\quad \text{such that}\quad  V_{\ell}(i,j) \leq
			w^{(n)}_E(x_i, x_{j})  \, \big|\, \mathcal{X}^{(n)} \Big)\\
			&=\sum_{j\in [\![1, n]\!],\, j\neq i} g\Big(t W^{(n)}_I(x_i, x_j), w^{(n)}_E(x_i, x_j)\Big),
		\end{aligned}
	\end{multline*}
	with:
	\[
	g(\theta,r)= \mathbb{P}_{\theta}\Big(L\geq 2, \,  \exists \ell \in [\![1, L]\!]
	\quad\text{such that}\quad
	V_\ell \le r\Big),
	\]
	where under $\mathbb{P}_{\theta}$, 
	$L$ is distributed as a Poisson random variable with parameter $\theta$ and
	$(V_\ell)_{\ell\in \mathbb{N}^*}$ as independent random variables uniformly
	distributed on $[0, 1]$ and independent of $L$. Elementary computations
	give:
	\begin{align*}
		g(\theta,r)
		= \mathbb{P}_{\theta} \Big(L\geq 2,  \ \exists \ell \in [\![1, L]\!]
		\quad\text{such that}\quad
		V_\ell \le r\Big)
		&  =\sum_{k=2}^\infty \frac{\theta^k}{k!} \mathrm{e}^{-\theta}
		\Big( 1- (1-r) ^{k}\Big)\\
		&= 1- \mathrm{e}^{-\theta r} - \theta r\,  \mathrm{e}^{-\theta}\\
		&\leq  \theta r \, ( \theta \wedge 1),    
	\end{align*}
	where for  the inequality, we used  that $1-\mathrm{e}^{-x}$ is less  than $x$
	and $x  \wedge 1$.  Since   $w^{(n)}=n\, w^{(n)}_I w^{(n)}_E$ is  bounded by
	$C_w$, we get that  $t W^{(n)}_I w^{(n)}_E\leq 2\, t\, C_w/n$
	(recall also the link between $W^{(n)}_I$ and $w^{(n)}_I$ in \eqref{def_WIN} and the symmetry of $w^{(n)}_E$). 
	We obtain  that:
	\[
	\mathbb{P}\Big(i\in \mathcal{R}^{(n)} 
	\, \big|\,\mathcal{X}^{(n)}  \Big)
	\leq \frac{2t\, C_w}{n}  \sum_{j\in [\![1, n]\!],\,  j\neq i}
	\Big( (t w^{(n)}_I(x_i, x_j) ) \wedge 1\Big) + \Big( (t w^{(n)}_I(x_j, x_i)
	) \wedge 1\Big). 
	\]
	This concludes the proof. \end{proof}

\subsection{Conclusion}
\hfill\\
Using Lemma~\ref{lem:xieND}, Equation~\eqref{eq:sum-Xi-j},
Lemmas~\ref{lem_crL} and~\ref{lem_ERL}, as well as the definition of
$\mathcal{I}_n$ in~\eqref{def_HI}, we deduce that:
\[  \mathbb{E}\Big[ \textstyle{\sup_{\{t\le T\} }  }\, \|\eta^{(n)}_t - \widetilde{\eta}^{(n)}_t\|_{TV}\Big]
\leq \frac{1}{n} \mathbb{E}\Big[  \Xi^{(n)}_T\Big]
=\frac{1}{n} \sum_{i=1}^n \mathbb{E}\Big[  \Xi^{i, (n)}_T\Big]
\leq  C_T \, \mathcal{I}_n(w^{(n)}_I\wedge 1),
\]
with $C_T=4  T \,(T \vee 1) \, C_w\, \mathrm{e}^{C_w\,  T}$.
This concludes the proof of Proposition~\ref{pr_cpl_G1}.

\section*{Acknowledgements}
The author would like to thank the anonymous referee for their diligent and professional work. 
\bibliographystyle{alpha}

\bibliography{biblio_vA}

\newcommand{\etalchar}[1]{$^{#1}$}
\providecommand{\noopsort}[1]{}
\begin{thebibliography}{FRBC{\etalchar{+}}22}

\bibitem[AB00]{anderssonbritton}
H.~Andersson and T.~Britton.
\newblock {\em Stochastic Epidemic Models and Their Statistical Analysis}.
\newblock Springer New York, 2000.

\bibitem[Abb18]{abbe}
E.~Abbe.
\newblock Community detection and {S}tochastic {B}lock {M}odels: recent
  development.
\newblock {\em J. Machine Learning Res.}, 18(177):1--86, 2018.

\bibitem[AN22]{alettinaldi}
G.~Aletti and G.~Naldi.
\newblock Opinion dynamics on graphon: the piecewise constant case.
\newblock {\em Appl. Math. Lett.}, 133:108227, 2022.

\bibitem[And98]{andersson}
H.~Andersson.
\newblock Limit theorems for a random graph epidemic model.
\newblock {\em Ann. Appl. Probab.}, 8(4):1331--1349, 1998.

\bibitem[And99]{andersson_mathscientist}
H.~Andersson.
\newblock Epidemic models and social networks.
\newblock {\em Math. Scientist}, 24(2):128--147, 1999.

\bibitem[APSS20]{APSS20}
M.~{Avella-Medina}, F.~{Parise}, M.~T. {Schaub}, and S.~{Segarra}.
\newblock Centrality measures for graphons: Accounting for uncertainty in
  networks.
\newblock {\em IEEE Transactions on Network Science and Engineering},
  7(1):520--537, 2020.

\bibitem[ARGL22]{aurellcarmonadayaniklilauriere}
A.~Aurell, C.~Ren\'{e}, D.~G\"{o}k\c{c}e, and M.~Lauri\`ere.
\newblock Finite state graphon games with applications to epidemics.
\newblock {\em Dyn. Games Appl.}, 12(1):49--81, 2022.

\bibitem[BB22]{ballbritton22}
F.~Ball and T.~Britton.
\newblock Epidemics on networks with preventive rewiring.
\newblock {\em Random Structures \& Algorithms}, 61(2):250--297, 2022.

\bibitem[BCL{\etalchar{+}}11]{borgschayeslovaszsosvesztergombi}
C.~Borgs, J.~Chayes, L.~Lov\'{a}sz, V.~S\'{o}s, and K.~Vesztergombi.
\newblock Limits of randomly grown graph sequences.
\newblock {\em European J. Comb.}, 32(7):985--999, 2011.

\bibitem[BEP22]{bowongemakouapardoux}
S.~Bowong, A.~Emakoua, and E.~Pardoux.
\newblock A spatial stochastic epidemic model: law of large numbers and central
  limit theorem.
\newblock {\em Stoch. PDE: Anal. Comp.}, 2022.

\bibitem[Bil99]{billingsley99}
P.~Billingsley.
\newblock {\em Convergence of Probability Measures}.
\newblock Wiley Series in Probability and Statistics. John Wiley \& Sons, Inc.,
  New York, second edition edition, 1999.

\bibitem[BLRT22]{BLRT22}
S.~Billiard, H.~Leman, T.~Rey, and V.C. Tran.
\newblock Continuous limits of large plant-pollinator random networks and some
  applications.
\newblock arxiv:2201.05219, 2022.

\bibitem[BN08]{ballneal}
F.~Ball and P.~Neal.
\newblock Network epidemic models with two levels of mixing.
\newblock {\em Math. Biosci.}, 212:69--87, 2008.

\bibitem[Bol79]{bollobas1979}
B.~Bollob\'{a}s.
\newblock {\em Graph theory}, volume~63.
\newblock Springer, New York, 1979.

\bibitem[Bor16]{Bo16}
C.~Bordenave.
\newblock {\em Lecture notes on random graphs and probabilistic combinatorial
  optimization}.
\newblock \url{https://www.math.univ-toulouse.fr/~bordenave/coursRG.pdf}, 2016.

\bibitem[BR13]{barbourreinert}
A.D. Barbour and G.~Reinert.
\newblock Approximating the epidemic curve.
\newblock {\em Electron. J. Probab.}, 18(54):2557, 2013.

\bibitem[BS01]{BS01}
I.~Benjamini and O.~Schramm.
\newblock Recurrence of distributional limits of finite planar graphs.
\newblock {\em Electron. J. Probab.}, 6:1--13, 2001.

\bibitem[CELT20]{charpentierelielaurieretran}
A.~Charpentier, R.~Elie, M.~Lauri\`ere, and V.C. Tran.
\newblock Covid-19 pandemic control: balancing detection policy and lockdown
  intervention under {I}{C}{U} sustainability.
\newblock {\em Math. Model. Nat. Phenom.}, 15:57, 2020.

\bibitem[CTdA08]{arazozaclemencontran}
S.~Cl\'{e}men\c{c}on, V.C. Tran, and H.~de~Arazoza.
\newblock A stochastic {S}{I}{R} model with contact-tracing: large population
  limits and statistical inference.
\newblock {\em J. Biol. Dyn.}, 2(4):391--414, 2008.

\bibitem[DDMT12]{decreusefonddhersinmoyaltran}
L.~Decreusefond, J.-S. Dhersin, P.~Moyal, and V.C. Tran.
\newblock Large graph limit for a {SIR} process in random network with
  heterogeneous connectivity.
\newblock {\em Ann. Appl. Probab.}, 22(2):541--575, 2012.

\bibitem[DDZ22]{DDZ22a}
J.-F. Delmas, D.~Dronnier, and P.-A. Zitt.
\newblock An infinite-dimensional metapopulation {SIS} model.
\newblock {\em J. Differ. Equ.}, 313:1--53, 2022.

\bibitem[DDZ23]{DDZ22d}
J.-F. Delmas, D.~Dronnier, and P.-A. Zitt.
\newblock Optimal vaccination: Various (counter) intuitive examples.
\newblock {\em J. Math. Bio.}, 86:1--57, 2023.

\bibitem[Dur07]{durrett}
R.~Durrett.
\newblock {\em Random graph dynamics}.
\newblock Cambridge University Press, New York, 2007.

\bibitem[DY22]{durrettyao2022}
R.~Durrett and D.~Yao.
\newblock {Susceptible–infected epidemics on evolving graphs}.
\newblock {\em Electronic Journal of Probability}, 27:1 -- 66, 2022.

\bibitem[FM04]{fourniermeleard}
N.~Fournier and S.~M\'{e}l\'{e}ard.
\newblock A microscopic probabilistic description of a locally regulated
  population and macroscopic approximations.
\newblock {\em Ann. Appl. Probab.}, 14(4):1880--1919, 2004.

\bibitem[FPP21]{forienpangpardoux}
R.~Forien, G.~Pang, and E.~Pardoux.
\newblock Epidemic models with varying infectivity.
\newblock {\em SIAM J. Appl. Math.}, 81(5):1893--1930, 2021.

\bibitem[FPPZN22]{forienpangpardouxzotsa}
R.~Forien, G.~Pang, E.~Pardoux, and A.B. Zotsa-Ngoufack.
\newblock Stochastic epidemic models with varying infectivity and
  susceptibility.
\newblock arXiv:2210.04667, 2022.

\bibitem[FRBC{\etalchar{+}}22]{foutelrodierblanquartetal}
F.~Foutel-Rodier, F.~Blanquart, P.~Courau, P.~Czuppon, J.-J. Duchamps,
  J.~Gamblin, E.~Kerdoncuff, R.~Kulathinal, L.~R\'egnier, L.~Vuduc, A.~Lambert,
  and E.~Schertzer.
\newblock From individual-based epidemic model to {McKendrick-von Foerster
  PDEs}: a guide to modeling and inferring {COVID-19} dynamics.
\newblock {\em J. Math. Biol.}, 85(43), 2022.

\bibitem[GKS20]{georgioukisssimon}
N.~Georgiou, I.Z. Kiss, and P.L. Simon.
\newblock Theoretical and numerical considerations of the assumptions behind
  triple closures in epidemic models on networks.
\newblock In R.P. Mondaini, editor, {\em Trends in biomathematics: modeling
  cells, flows, epidemics, and the environment}, pages 209--234, Switzerland
  AG, 2020. Springer.

\bibitem[Hou12]{house}
T.~House.
\newblock Modelling epidemics on networks.
\newblock {\em Contemp. Phys.}, 53(3):213--225, 2012.

\bibitem[IW89]{ikedawatanabe}
N.~Ikeda and S.~Watanabe.
\newblock {\em Stochastic Differential Equations and Diffusion Processes},
  volume~24.
\newblock North-Holland Publishing Company, 1989.
\newblock Second Edition.

\bibitem[JLW14]{jansonluczakwindridge}
S.~Janson, M.~Luczak, and P.~Windridge.
\newblock Law of large numbers for the {S}{I}{R} epidemic on a random graph
  with given degrees.
\newblock {\em Random Structures \& Algorithms}, 45(4):726--763, 2014.

\bibitem[JS87]{jacod}
J.~Jacod and A.N. Shiryaev.
\newblock {\em Limit Theorems for Stochastic Processes}.
\newblock Springer-Verlag, Berlin, 1987.

\bibitem[KHT22]{KHT22}
D.~Keliger, I.~Horváth, and B.~Takács.
\newblock Local-density dependent {M}arkov processes on graphons with
  epidemiological applications.
\newblock {\em Stoch. Proc. Appl.}, 148:324--352, 2022.

\bibitem[KM32]{KM32}
W.O. Kermack and A.G. McKendrick.
\newblock {Contributions to the Mathematical Theory of Epidemics. II. The
  Problem of Endemicity}.
\newblock {\em Proc. Roy. Soc. Lond. A}, 138(834):55--83, 1932.

\bibitem[KM33]{KM33}
W.O. Kermack and A.G. McKendrick.
\newblock Contributions to the mathematical theory of epidemics. iii.—further
  studies of the problem of endemicity.
\newblock {\em Proc. R. Soc. Lond. A}, 141:94–122, 1933.

\bibitem[KMS17]{kissmillersimon}
I.Z. Kiss, J.C. Miller, and P.~Simon.
\newblock {\em Mathematics of Epidemics on Networks}, volume~46 of {\em
  Interdisciplinary Applied Mathematics}.
\newblock Springer, 1 edition, 2017.

\bibitem[KS21]{kryvenstegehuis}
I.~Kryven and C.~Stegehuis.
\newblock Contact tracing in configuration models.
\newblock {\em Journal of Physics: Complexity}, 2(2):025004, 2021.

\bibitem[KT19]{kuehnthrom}
C.~Kuehn and S.~Throm.
\newblock Power network dynamics on graphons.
\newblock {\em SIAM J. Appl. Math.}, 79(4):1271--1292, 2019.

\bibitem[Lig05]{Lig05}
T.~M. Liggett.
\newblock {\em Interacting particle systems}.
\newblock Classics in Mathematics. Springer-Verlag, Berlin, 2005.
\newblock Reprint of the 1985 original.

\bibitem[Lov12]{lovaszbook}
L.~Lov\'asz.
\newblock {\em Large networks and graph limits}, volume~60 of {\em Colloquium
  publications}.
\newblock American Mathematical Society, 2012.

\bibitem[Mil11]{miller}
J.C. Miller.
\newblock A note on a paper by {E}rik {V}olz: {S}{I}{R} dynamics in random
  networks.
\newblock {\em J. Math. Biol.}, 62(3):349--358, 2011.

\bibitem[New02]{newman}
M.E.J. Newman.
\newblock The spread of epidemic disease on networks.
\newblock {\em Physical Reviews E}, 66, 2002.

\bibitem[New03]{newman_SIAM}
M.E.J. Newman.
\newblock The structure and function of complex networks.
\newblock {\em SIAM Review}, 45:167--256, 2003.

\bibitem[NP22]{naldipatane}
G.~Naldi and G.~Patan\`e.
\newblock A graph-based modelling of epidemics: Properties, simulation, and
  continuum limit.
\newblock arXiv:2208.07559, 2022.

\bibitem[PB19]{ballbrittonlaredopardouxsirltran}
E.~Pardoux and T.~Britton.
\newblock {\em Stochastic Epidemic Models with Inference}.
\newblock Lecture Notes in Mathematics 2255. Springer, 2019.

\bibitem[Pen03]{penrose_randomgeometricgraphs}
M.~Penrose.
\newblock {\em Random geometric graphs}, volume~5 of {\em Oxford Studies in
  Probability}.
\newblock Oxford University Press, Oxford, 2003.

\bibitem[Per99]{perkins}
E.~A. Perkins.
\newblock Dawson-{W}atanabe superprocesses and mesure-valued diffusions.
\newblock In {\em {Ecole d'Eté de Probabilités de Saint-Flour}}, volume 1781
  of {\em Lectures Notes in Math.}, pages 125--329, New York, 1999.

\bibitem[vdH17]{vanderhofstad}
R.~van~der Hofstad.
\newblock {\em Random Graphs and Complex Networks}, volume~1 of {\em Cambridge
  Series Stat. Probab. Math.}
\newblock Cambridge University Press, Cambridge, 2017.

\bibitem[vdH24]{vH24}
R.~van~der Hofstad.
\newblock {\em Random Graphs and Complex Networks}.
\newblock Cambridge Series in Statistical and Probabilistic Mathematics.
  Cambridge University Press, 2024.

\bibitem[vdHJvL10]{vanderhofstadjanssenleeuwaarden}
R.~van~der Hofstad, A.J.E.M. Janssen, and J.S.H. van Leeuwaarden.
\newblock Critical epidemics, random graphs, and {B}rownian motion with a
  parabolic drift.
\newblock {\em Adv. Appl. Prob.}, 42:1187--1206, 2010.

\bibitem[VFG20]{vizuetefrascagarin}
R.~Vizuete, P.~Frasca, and F.~Garin.
\newblock Graphon-based sensitivity analysis of {S}{I}{S} epidemics.
\newblock {\em IEEE Control Systems Letters}, 4(3):542--547, 2020.

\bibitem[Vol08]{volz}
E.~Volz.
\newblock {S}{I}{R} dynamics in random networks with heterogeneous
  connectivity.
\newblock {\em Math. Biol.}, 56:293--310, 2008.

\end{thebibliography}

\end{document}